\newcommand{\lab}[1]{\label{#1}}               
 \newcommand{\thlab}[1]{\thlabel{#1}} 
\newcommand{\bee}{\begin{equation}}
\newcommand{\ee}{\end{equation}}
\newcommand{\bea}{\begin{eqnarray}}
\newcommand{\eea}{\end{eqnarray}}
\newcommand{\bean}{\begin{eqnarray*}}
\newcommand{\eean}{\end{eqnarray*}}
\newcommand\eqn[1]{(\ref{#1})}
\newcommand{\bel}[1]{\bee\lab{#1}}
  \def\qed{~~\vrule height8pt width4pt depth0pt}
\renewcommand\hat[1]{\widehat{#1}}
\renewcommand\bar[1]{\widebar{#1}}
\newcommand\se{\subseteq}
\newcommand\sm{\setminus}
\newcommand{\bad}{\ensuremath {\mathbf{bad}}}
\newcommand{\Bad}{\ensuremath {B}}
\def\Gnp{{\cal G}(n,p)}
\def\eps{\varepsilon}
\newcommand{\G}{\mathcal{G}}
\newcommand{\cG}{\mathcal{G}}
\newcommand{\cB}{\mathcal{B}}
\newcommand{\cS}{\mathcal{S}}
\newcommand{\Gnm}{\mathcal{G}(n,m)}
\def\Num{\mathcal{N}}
\def\Z{\mathbbm{Z}}
\def\Yc{{\cal Y}}
\def\YPavb{{Y}}
\def\yv{{\bf y}}
\def\Pc{{\cal P}}
\def\cP{{\cal P}}
\def\Rc{{\cal R}}
\def\Cc{{\cal C}}
\def\cC{{\cal C}}
\def\cA{{\cal A}}
\def\oA{{\vec{\cA}}}		
\def\D{{\mathfrak{D}}}	
\def\cD{{\cal D}}		
\def\R{\mathbbm{R}}		
\def\W{{\mathfrak{W}}} 	
\def\pr{{\bf P}}			
\def\Pr{\pr}			
\def\Var{{\bf Var}}
\def\ex{{\bf E}}
\def\dv{{\bf d}}
\def\dw{{\bf d}'}
\def\pv{{\bf p}}
\def\rv{{\bf r}}
\def\ve{{\bf e}}
\def\eb{{\ve_b}}
\def\ea{{\ve_a}}
\def\ev{{\ve_v}}
\def\Z{\mathbbm{Z}_{\ge 0}}
\def\Pgr{P^{\mathrm{gr}}}
\def\Rgr{R^{\mathrm{gr}}}
\def\Ygr{Y^{\mathrm{gr}}}
\def\Epp{{{\cal E}'_p}}
\def\Ep{{{\cal E}_p}}
\def\gtwo{{\gamma_2}}
\DeclareRobustCommand\widecheck[1]{{\mathpalette\@widecheck{#1}}}
\def\@widecheck#1#2{%
    \setbox\z@\hbox{\m@th$#1#2$}%
    \setbox\tw@\hbox{\m@th$#1%
       \widehat{%
          \vrule\@width\z@\@height\ht\z@
          \vrule\@height\z@\@width\wd\z@}$}%
    \dp\tw@-\ht\z@
    \@tempdima\ht\z@ \advance\@tempdima2\ht\tw@ \divide\@tempdima\thr@@
    \setbox\tw@\hbox{%
       \raise\@tempdima\hbox{\scalebox{1}[-1]{\lower\@tempdima\box
\tw@}}}%
    {\ooalign{\box\tw@ \cr \box\z@}}}
\def\epsV{\varepsilon}
\def\epsA{\varepsilon}
\def\sigmaA{\sigma}
\def\sigmaB{\sigma}
\newcommand{\sigmaASquared}{\sigmaA\raisebox{0mm}{$^2$}}
\newcommand{\sigmaBSquared}{\sigmaB\raisebox{0mm}{$^2$}}
\newcommand{\size}[1]{\ensuremath{\left| #1 \right|}}
\newcommand{\reals}{\R}
\newtheorem{thm}{Theorem}[section]
\newtheorem{cor}[thm]{Corollary}
\newtheorem{conj}[thm]{Conjecture}
\newtheorem{lemma}[thm]{Lemma}
\newtheorem{proposition}[thm]{Proposition}
\newtheorem{claim}[thm]{Claim}
\newtheorem{definition}[thm]{Definition}
\date{}
\begin{document}
\title{Asymptotic enumeration of graphs by degree sequence, and the degree sequence of a random graph}

 \author{Anita Liebenau\thanks{Previously at Monash University where this research was carried out. Part of this research was supported by a DECRA Fellowship from the Australian Research Council.}\\
 {\small School of Mathematics and Statistics}\\{\small UNSW Sydney NSW 2052}\\
 {\small Australia} \\
 {\small  \tt{a.liebenau@unsw.edu.au}}
 \and Nick Wormald\thanks{Supported by an ARC Australian Laureate Fellowship.}\\
 {\small School of Mathematical Sciences}\\{\small Monash University VIC 3800}\\
 {\small Australia} \\
{\small  \tt{ nick.wormald@monash.edu}}
 }

 \maketitle
  
 \begin{abstract}
 In this paper we relate a fundamental parameter of a random graph, its  degree sequence, to a simple model of nearly independent binomial random variables. As a result, many interesting functions of the joint distribution of graph degrees, such as the distribution of the median degree, become amenable to estimation. Our result is established by proving an asymptotic formula conjectured in 1990 for the number of graphs with given degree sequence. In particular, this gives an asymptotic  formula for the number of $d$-regular graphs for all $d$, as $n\to\infty$. The key to our results is a new approach to estimating ratios between point probabilities  in the space of degree sequences of the random graph, including analysis of fixed points of the associated operators.
 \end{abstract}

\section{Introduction}\lab{s:intro}

We consider the number of graphs with a given degree sequence. In particular, we show  that a formula known to hold in the sparse and dense cases as long as the degrees of vertices are somewhat close to each other, also holds for the remaining cases. The main consequence of this is a very simple and consequently useful model for the degree sequence of a random graph.  The two most popularly studied models of random graphs are considered here: $\G(n,p)$, in which $n$ vertices have edges included  between each pair of them independently with probability $p$ for each pair, and $\G(n,m)$ in which $n$ vertices have $m$ edges included, chosen uniformly at random from the $m$-subsets of the unordered pairs of vertices. Those  classical models of random graphs easily satisfy the required restrictions on degrees  with high probability, and it follows that the degree sequence of  $\Gnm$ is well approximated by a certain sequence of independent binomial variables conditioned on summing to $2m$. A similar  connection is provided between the random graph $\Gnp$ and a slightly twisted sequence of independent binomial random variables. This makes a very convenient way of proving results about the degree sequence of $\Gnp$. Enumeration formulae for graphs by degree sequence have also led by other routes to a large number of results in random graph theory, some of which we mention in Section~\ref{s:further}.

The study of the degree sequence of the random graph goes back to the early papers of Erd{\H o}s and R{\'e}nyi~\cite{er1959,er1964} and has since attracted the attention of many, see, e.g.,~\cite{ba1982,bo1982,p1984,p1987,kr1987,bkr1989}.  For a historical overview, see for example Bollob{\' a}s' seminal book~\cite{Bo}, where the degree sequence is the first major topic. 
The distribution of the $k^{\mathrm{th}}$ largest element $d_k$ of the sequence, for example, was determined quite precisely when   $k$  is small. The book~\cite{BHJ} by Barbour, Holst and Janson contained much information on the distribution of the number $D_k$ of vertices of degree $k$.  
On the other hand, 
enumerating graphs with given degree sequence has been the interest of various authors over many years. 
Read \cite{read1958} found a recursive formula for the number of 3-regular graphs from which  he also deduced a simple asymptotic formula. After this, formulae for the number of graphs with given   degree sequence $\dv=(d_1,\ldots,d_n)$  were found by Bender and Canfield~\cite{bc1978} and Wormald~\cite[Theorem 3.3]{Wthesis}, and for ever-denser ranges of degrees by Bollob\'as \cite{b1980}, and McKay~\cite{Mc}, culminating in papers giving asymptotic formulae for a  range of degrees, provided the average degree $d$ is $o(\sqrt n)$ (by McKay and Wormald~\cite{MWlow}) or between $c n / \log n$ and $n/2$ for a certain $c$  (by  McKay and Wormald~\cite{MWhigh}, also treated more recently by Barvinok and Hartigan~\cite{BH} for a wider spread of degrees, but similar density).  The complementary ranges of $d$ larger than $n/2$ are automatically covered. Also quite recently, Janson~\cite{j2009,j2014} obtained formulae for some sparse degree sequences with maximum degree $\Theta(\sqrt n)$, and Gao and Wormald~\cite{GW} for others having slightly larger maximum degree.
A  case of special interest, which saw no advance since 1990, is the  problem of finding the asymptotic number of $d$-regular graphs for $d$ in the range $c\sqrt{n} \le d=o(n/\log n)$.

In 1990, McKay and Wormald~\cite{MWhigh} restated the asymptotic formulae from the sparse and dense cases in a common form, which they conjectured to be valid additionally for all densities in between those two cases and hence for all densities except trivial extremely sparse and dense ones. For a precise statement, see the {\em Binomial Approximation Conjecture} (Conjecture~\ref{conj1}) below. It applies  to all the typical degree sequences in either model of random graphs defined above. 
In this paper, we prove the Binomial Approximation Conjecture. 
In particular, as a very special case, this implies that the number of $d$-regular graphs on $n$ vertices is 
asymptotically equal to 
$$\frac{\displaystyle \binom{n-1}{d}^n  \binom{\binom{n}{2}}{m}}{\displaystyle\binom{n(n-1)}{2m}} \cdot e^{1/4}$$
for all $1\le d\le n-2$, where $m=dn/2$.

A weakened version of the Binomial Approximation Conjecture was given in 1997  by McKay and Wormald~\cite{degseq1} (see  \thref{conj2}) and shown to imply an explicit connection between the degree sequence of a random graph of a given density, and a sequence of independent binomial variables. 
 This work opened up a completely new approach to deriving properties of the degree sequence of a random graph, by considering independent binomials. It permits easy access to all the known properties, and a large number that   were previously inaccessible. Previously, precise results for order statistics were only known for extreme degrees, but now even the distribution of the median degree can be closely examined for those densities where the conjecture holds.

To present the connection between the degree sequence of $\Gnm$ or $\Gnp$, and a sequence of independent binomials, let us  first make some definitions. We assume that a graph  on $n$ vertices has vertex set $v_1,\ldots, v_n$ and degree sequence $(d_1,\ldots, d_n)$,  so that $ d(v_i)=d_i$.  If $\G$ is any probability space of random  graphs, let    $\cD(\G)$ be the random vector distributed as the degree sequence of a random graph $G\in\G$. Also define ${\cal B}_p(n)$ to be the random sequence consisting of $n$ independent binomial variables ${\rm Bin}(n-1,p)$. 

 Let $A_n$ and $B_n$ be two sequences of probability spaces  with the same underlying set for each $n$. Suppose that 
 whenever a sequence of events $H_n$ satisfies $\pr(H_n)=n^{-O(1)}$  in either model, 
it is true that $\pr_{A_n}(  H_n )\sim \pr_{B_n}(  H_n )$, where by $f(n)\sim g(n)$ we mean that $f(n)/g(n)\to 1$ as $n\to\infty$. We then say that $A_n$ and $B_n$ are {\em asymptotically quite equivalent} (a.q.e.). 
Throughout this paper  we use    $\omega $ to  be  an arbitrary function of $n$ such that $\omega \to\infty$ as $n\to\infty$, perhaps different at each occurrence. 
As we will see later, our main result, combined with existing results, implies the following. Note for part (i)  that $B_p(n)\mid_{\Sigma=2m}$ is independent of $p$.
 \begin{proposition}\thlab{p:model}
  Let $n,m$ be integers and let $0<p<1$.  
 Let $\Sigma$ denote  the sum of the components of the random vector ${\cal B}_p(n)$ in (i) and   $\cB_{\hat p}(n)$ in (ii). 
\begin{itemize}
\item[$(i)$]  $\cD(\Gnm)$  and $\cB_p(n)\mid_{\Sigma=2m}$ are a.q.e.\ provided that $ \min \{m, {n\choose 2}-m\}=\omega \log n$. 
\item[$(ii)$]  Let  $\hat{p}$ be randomly chosen according to the  normal distribution with mean $p$ and variance $p(1-p)/n(n-1)$, truncated at 0 and 1. Then  $\cD(\Gnp)$  and $\cB_{\hat p}(n)\mid_{\Sigma {\text{ is even}}}$ are a.q.e.\ provided that   $p(1-p)=\omega \log^3 n / n^2 $. 
\end{itemize}
\end{proposition}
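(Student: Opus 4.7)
The plan is to derive both parts directly from the Binomial Approximation Conjecture (the main theorem of the paper) by a pointwise comparison of probabilities. The reduction is: it suffices to show that for every ``typical'' degree sequence $\dv$ the ratio $\Pr_{A_n}(\cD=\dv)/\Pr_{B_n}(\cD=\dv)$ is $1+o(1)$ uniformly, while the complement of the typical set has probability $n^{-\omega(1)}$ in both models. Once this is done, for any event $H_n$ with $\Pr(H_n)=n^{-O(1)}$ the contribution from atypical sequences is negligible, and the pointwise ratio over typical sequences forces $\Pr_{A_n}(H_n)\sim\Pr_{B_n}(H_n)$, which is exactly a.q.e.

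For part (i), use
$$\Pr(\cD(\Gnm)=\dv)=\frac{N(\dv)}{\binom{\binom{n}{2}}{m}}, \qquad \Pr(\cB_p(n)=\dv\mid \Sigma=2m)=\frac{\prod_i\binom{n-1}{d_i}}{\binom{n(n-1)}{2m}},$$
noting that the right-hand side is manifestly independent of $p$. Substituting the asymptotic formula for $N(\dv)$ from the main theorem yields an explicit ratio which one verifies tends to $1$ uniformly on the set $T_n$ of sequences whose coordinates lie within $\omega\sqrt{np(1-p)}$ of $2m/n$, taking $p := 2m/(n(n-1))$. Chernoff bounds (for $\cB_p(n)$) and switching or McDiarmid-type concentration (for $\Gnm$) show that $T_n^c$ has probability $n^{-\omega(1)}$ in both models provided $\min\{m,\binom{n}{2}-m\}=\omega\log n$.

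For part (ii), exploit the fact that $\Gnp$ conditioned on having $m$ edges is $\Gnm$, so
$$\Pr(\cD(\Gnp)=\dv)=\Pr\!\bigl(\mathrm{Bin}(\tbinom{n}{2},p)=m\bigr)\cdot\Pr(\cD(\Gnm)=\dv), \qquad m=\tfrac{1}{2}\textstyle\sum_i d_i.$$
On the binomial side, integrate $\hat p$ out of $\cB_{\hat p}(n)$ and apply a Gaussian saddle-point expansion in $\hat p$. The variance $p(1-p)/n(n-1)$ of the truncated normal is calibrated precisely so that the resulting marginal law of $\Sigma/2$ matches $\mathrm{Bin}(\binom{n}{2},p)$ to leading order, uniformly across the polynomially likely range of $m$. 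This reduces part (ii) to part (i); the stronger hypothesis $p(1-p)=\omega\log^3 n/n^2$ is what is needed to keep the saddle-point approximation valid across the full contributing range of $\hat p$ and $m$.

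The main obstacle is uniformity. Polynomially likely events may concentrate on degree sequences whose entries deviate by $\omega\sqrt{np(1-p)}$ from the mean, so the multiplicative error term in the asymptotic formula for $N(\dv)$ must be $o(1)$ uniformly throughout $T_n$, not merely on the central sequences closest to regular. Similarly, the saddle-point expansion over $\hat p$ in part (ii) must be controlled across the full width of the truncated normal without drifting into the trivial extreme-density regimes excluded from the conjecture. These uniform bounds, rather than the underlying pointwise identities, form the genuine technical content of the proof.
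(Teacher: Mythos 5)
Your overall strategy --- compare the two models pointwise on a high-probability set of degree sequences and handle the edge-count/$\hat p$ mixture separately --- is sound, but it is worth noting that the paper does not carry out this transfer itself: its proof of the proposition is essentially a citation chain. It establishes Conjectures~\ref{conj1} and~\ref{conj2} (the main theorem for the gap range, combined with the previously known sparse and dense ranges from~\cite[Theorem~2.5]{degseq1}), and then invokes~\cite[Theorem~2.6(b)]{degseq1} with $X_n$ the indicator of $H_n$, where the weighting defining $\Epp$, the integration over $\hat p$, and the parity conditioning are all already handled (\cite[Section~3]{degseq1}). So you are in effect re-proving the machinery of that earlier paper; your sketch of part (ii), including the observation that the variance $p(1-p)/n(n-1)$ is calibrated so that the mixed law of $\Sigma$ has the same variance $2n(n-1)p(1-p)$ as $2\,\mathrm{Bin}\bigl(\binom n2,p\bigr)$, and that $p(1-p)=\omega\log^3 n/n^2$ is what keeps the local-CLT comparison valid out to $O(\sqrt{\log n})$ standard deviations, is consistent with what is done there.

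Two concrete problems remain. First, your typical set $T_n$ is defined only by individual degree deviations, and on such a set the pointwise ratio is \emph{not} $1+o(1)$: by~\eqref{formula} it is $\exp\bigl(\tfrac14-\gamma_2^2/4\mu^2(1-\mu)^2\bigr)(1+o(1))$, which equals $e^{1/4}(1+o(1))$ on, say, regular sequences, which lie in your $T_n$. You must additionally require concentration of $\sigma^2$ (equivalently $\gamma_2=\mu(1-\mu)(1+o(1))$ at a suitable rate), which holds with probability $1-n^{-\omega}$ in both models (Lemma~\ref{l:sigmaConc}, or \cite[Theorem~3.4]{degseq1}); without this the central step of your reduction fails. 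Second, you claim to derive both parts ``directly from the main theorem,'' but Theorem~\ref{t:graphwide} requires $\mu\le\mu_0$ and $\mu\ge(\log n)^K/n$, whereas part (i) allows $m$ as small as $\omega\log n$ and as large as $\binom n2-\omega\log n$; moreover, for typical sequences in the extremely sparse range even the hypothesis of Conjecture~\ref{conj1} ($\max_j|d_j-d|=o(n^\eps d^{1/2})$) fails. Covering the proposition's full range therefore genuinely requires the older sparse and dense enumeration results (equivalently, the previously established acceptability ranges of~\cite[Theorem~2.5]{degseq1}) in addition to the new theorem, exactly as the paper's phrase ``our main result, combined with existing results'' indicates; your proposal as written does not supply these.
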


 Note that the assumptions on $p$ and $m$ merely ensure that the graph, or its complement, has number of edges at least a small power of  $\log  n$; if this fails, the degree sequence is almost trivial and the graph  is likely to be uninteresting, either a set of independent edges or its complement.

Proposition~\ref{p:model} has significant implications. In particular, it was shown  in~\cite{degseq1} that  a result very similar to   Proposition~\ref{p:model}(ii) can be used, for whatever range of $p(n)$ it is valid for, to transfer general classes of properties to $\cD(\Gnp)$   from the independent binomial model ${\cal B}_p(n)$. We give details later in this section.  It was also observed in~\cite{degseq1}, using the known asymptotic formulae, that the Binomial Approximation Conjecture holds when $p=o(1/\sqrt n)$ or $p(1-p)> c/  \log n$.  However, the   rather large gap, where $p(1-p)$ is between roughly $1/\sqrt n$ and $1/\log n$, was still  open, as the appropriate enumeration results  were  lacking. This gap has prevented the new approach being fully utilised. Part (i) of the proposition is also appealing as a direct connection between independent binomials and $\cD(\Gnm)$, however   the event $\Sigma=2m$ is a fairly ``thin'' event so some results would require more finesse to be transferred.

In this article we 
introduce an approach to enumerating graphs by degree sequence that differs significantly from what has previously been applied to this or any similar problems. 
 This new method is versatile enough to be applied to enumeration problems for other discrete structures, as described in Section~\ref{s:final}. For this reason, our theoretical results are set in a framework slightly wider
 than is needed for the enumeration results derived in the present paper.

\subsection{Conjectures and results} 
In 1990 McKay and Wormald~\cite{MWhigh} unified the existing asymptotic formulae for the sparse and the dense case into one form and conjectured this form to hold also  for the gap in the range of degrees, as long as the degree sequences are close to regular. 
Throughout this paper  
 we use the following notation. Given a sequence $\dv=(d_1,\ldots, d_n)$, let $g(\dv)$ denote the number of graphs whose degree sequence is $\dv$, let $\mu= \mu  ( \dv)=d/(n-1)$  where $d=\frac{1}{n}\sum_{i=1}^n d_i$, and let $\gamma_2=(n-1)^{-2}\sum_{i=1}^n (d_i-d)^2$. 
We   refer to the following as the {\em Binomial Approximation Conjecture}.

\begin{conj} \cite[Conjecture 1]{MWhigh}  \thlab{conj1}  For some absolute constant $\eps>0$, 
 if $\dv=\dv(n)$ satisfies $\max_j | d_j-d| = o(n^\eps\min\{d, n-d-1\}^{1/2})$, $n\min\{d,n-d-1\}\to\infty$, and $\sum_i d_i$ is even, then  
 \bel{conjForm}
g(\dv) \sim \sqrt 2
 \exp\Big( {1\over 4} - {\gamma_2^2\over 4\mu^2(1-\mu)^2}  \Big)(\mu^ \mu(1-\mu)^{1-\mu}) ^{n(n-1)/2}  \prod {n-1\choose d_i}.  
\ee
  \end{conj}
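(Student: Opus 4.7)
The plan is to prove the formula by relating $g(\dv)$ to a known case via small perturbations of the degree sequence, then chaining these perturbations together and extracting the formula as a telescoping product. The natural quantities to control are ratios of the form
\bee
R(\dv; a,b) = \frac{g(\dv + \ea - \eb)}{g(\dv)} \qquad \mbox{and} \qquad R^+(\dv; a,b) = \frac{g(\dv + \ea + \eb)}{g(\dv)},
\ee
where $\ea, \eb$ denote unit coordinate vectors. The first type shifts a unit of degree between two vertices (constant sum); the second raises the sum by $2$ (essentially, adds an edge). Classical switching arguments express each such ratio as a ratio of combinatorial counts in a random graph with degree sequence $\dv$, which should evaluate asymptotically to expressions closely matching the ratios of the conjectured right-hand sides. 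With such estimates in hand, one chains perturbations to connect $\dv$ to a sequence $\dv_0$ in the sparse regime where the formula is already established in~\cite{MWlow}, and the telescoping product recovers $g(\dv)$.

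It is convenient to separate the task into two pieces. First, hold the average degree $d$ fixed and show, using ratios of type $R(\dv;a,b)$, that $g(\dv)\cdot \prod \binom{n-1}{d_i}^{-1}$ is essentially constant as $\dv$ ranges over the allowed class at that average, up to the factor $\exp(-\gtwo^2/(4\mu^2(1-\mu)^2))$; these two are the only parts of the right-hand side of~\eqn{conjForm} that depend on the detailed shape of $\dv$ at fixed $d$. Second, vary $d$ using $R^+$-type ratios to track the factor $(\mu^\mu(1-\mu)^{1-\mu})^{n(n-1)/2}$; the absolute constant $\sqrt 2\,e^{1/4}$ is then pinned down by matching at the sparse end of the chain. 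Complementation halves the effective range, so it suffices to work up to $d = (n-1)/2$.

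The principal obstacle is obtaining ratio estimates sharp enough to survive a telescoping product of possibly $\Theta(n^2)$ terms. A naive switching computation delivers only multiplicative error of order $1 + O(1/d)$ per step, which is hopeless after iteration. Overcoming this likely requires a bootstrap: prove a weak form of the binomial approximation first by direct switching, substitute this back into the enumeration of valid switches to evaluate higher-order joint statistics of the degrees of random graphs in $\cG(\dv)$ more precisely, and iterate the refinement until the multiplicative error per ratio is $1 + O(n^{-C})$ for any desired constant $C$. Preserving the second-order terms through this bootstrap, namely the $\gtwo^2/(4\mu^2(1-\mu)^2)$ correction and the additive $1/4$ in the exponent, is delicate: these are small effects that a first-order analysis would wash out, so the expansion of each switching ratio must be carried to quadratic order in the local degree deviations, and the accumulated quadratic contributions along the telescoping chain must be identified with the claimed global quadratic correction. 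I expect this uniform quadratic-precision bookkeeping across the whole intermediate-density range to be the technical heart of the argument.
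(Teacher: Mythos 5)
Your high-level skeleton (estimate ratios of counts for nearby degree sequences, telescope along a chain, match against the conjectured formula) is indeed the paper's strategy, but the step you defer --- ``iterate the refinement until the multiplicative error per ratio is $1+O(n^{-C})$'' --- is precisely the content of the theorem, and your proposed mechanism for it does not work as stated. A bootstrap that feeds a weak binomial approximation back into switching counts improves the error only by bounded factors per round unless you already have a closed-form ansatz accurate to the target order; iteration alone does not manufacture accuracy $n^{-C}$ for arbitrary $C$, and nothing in the sketch controls the second-order terms (the $\gamma_2^2/4\mu^2(1-\mu)^2$ and the $1/4$) uniformly across the chain --- you flag this yourself as the ``technical heart,'' which is an accurate description of what is missing. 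The difficulty is compounded by your choice of chain: connecting to a sparse anchor via edge-addition ratios $R^+$ requires $\Theta(nd)$ (up to nearly $n^2$) steps, so the per-step error in $\log R^+$ must be summable over that many steps; even the precision the paper ultimately achieves for edge probabilities (relative error $O(\mu\,\eps^4)$ with $\eps$ the relative degree spread, see \eqref{Peqnew}) would not survive such a chain for spreads of order $n^{\eps}\sqrt d$, so anchoring at the sparse end and tracking $(\mu^\mu(1-\mu)^{1-\mu})^{n(n-1)/2}$ by density-stepping demands strictly more than the fixed-density part of the argument, not the same amount.

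The paper avoids both problems. It never varies the density: it works at fixed $m$, uses only the degree-shift ratios $R_{ab}(\dv)=\Num(\dv-\ea)/\Num(\dv-\eb)$, and pins the constant $\sqrt2\,e^{1/4}$ not by matching at a sparse anchor but by normalisation --- both $\cD(\Gnm)$ and the binomial-times-$\widetilde H$ model are probability spaces, and concentration of $\gamma_2$ gives $\ex_{\cB_m}\widetilde H\sim 1$ (Lemma~\ref{l:lemmaX} plus Lemma~\ref{l:sigmaConc}); this shortens the chain to the diameter $O(nd^{\alpha})$ of the set of admissible sequences at fixed $m$. For the per-ratio precision it replaces the switching bootstrap by exact self-consistent recursions for $P_{av}$ and $R_{ab}$ (Proposition~\ref{l:recurse}, Lemma~\ref{2Path}), views them as operators whose fixed point is the true $(P,R)$, proves a contraction property with factor $O(\mu)$ (Corollary~\ref{c:contraction}), and verifies that an explicit second-order ansatz $(\Pgr,\Rgr)$ is an approximate fixed point to error $O(\mu\eps^4)$ (Lemmas~\ref{l:maple}, \ref{l:mapleHigher}); contraction then transfers that accuracy to the true functions. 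So the error per ratio is limited by the quality of the guessed formula, not improved indefinitely by iteration --- which is exactly why your claim of arbitrary $n^{-C}$ per-ratio accuracy via repeated switching is the gap, and why the conjecture's full range is obtained only with $\alpha<3/5$-type spread control and the fixed-$m$, normalisation-based comparison rather than a density-telescoping to the regime of~\cite{MWlow}.
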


\noindent{\bf Note.\ } 
The permitted domain of $\dv$  is compact for each $n$, so one can consider the `worst' sequence $\dv(n)$ for each $n$, and arrive at an equivalent way of stating the result: there is a function $\delta(n)\to 0$ such that the  relative error in ``$\sim$"   is bounded above in absolute value by   $\delta(n)$   for all   $\dv$ under consideration. This was the manner of stating Conjecture~\ref{conj2} below in~\cite{degseq1}.

Recall that $\cB_p(n)$ yields a random sequence consisting of $n$ independent  binomial variables $\text{Bin}(n-1,p)$. 
Let $\cB_m=\cB_m(n)$ denote $\cB_p(n)$ conditioned on the sum of the sequence being $2m$ and note that for a given sequence $\dv$ with $\sum_{i=1}^n d_i =2m$ we have that 
$$
\pr_{\cB_m}(\dv) = \binom{n(n-1)}{2m}^{-1}   \prod_{i=1}^n\binom{n-1}{d_i}
$$
which is, we recall, independent of $p$. 
Multiplying by $\size{\G(n,m)}$ and using Stirling's approximation 
shows that the formula 
\bel{formula}
\pr_{\cD(\Gnm)}  (\dv)  \sim \pr_{\cB_m}(\dv) 
\exp\Big( {1\over 4} - {\gtwo^2\over 4\mu^2(1-\mu)^2} \Big)  
\ee
is equivalent to the asymptotic formula in~\eqref{conjForm}, 
as long as $m$ and $n(n-1)-2m$ both tend to infinity.  

\thref{p:model} can be shown  to follow from a   
 conjecture made in 1997,  which we present below,  that is  weaker than \thref{conj1}. To state it,  we define the model $\Ep$ to be $\cB_p(n)$, conditioned on {\em even} sum, and then construct
$\Epp$  from $\Ep $ by weighting each $\dv$  with a weight depending 
only on $2m=\sum d_i$, such that the value $m$ has distribution ${\rm Bin}(n(n-1)/2,p)$. 

\medskip

\noindent{\bf Definition\ }A probability $p=p(n)$ is {\em acceptable} if $p(1-p)n^2= \omega \log n$ and 
there is a set-valued function $R_p(n)$ of integer sequences of length $n$ with even sum, such that   both of the following hold. 
 \begin{enumerate}[(i)]
\item For   $\dv\in R_p(n)$  
\bel{formula1}
\pr_{\cD(\Gnp)} (\dv) \sim \pr_{\Epp}(\dv) 
\exp\Big( {1\over 4} - {\gtwo^2\over 4\mu^2(1-\mu)^2} \Big).
\ee
 \item In each of the models $\Ep$ and $\cD(\Gnp)$, we have
$\pr\big(R_p(n)\big) = 1 - n^{ -\omega }$.
 \end{enumerate}

The conjecture in~\cite{degseq1} is as follows.
\begin{conj} [McKay and Wormald \cite{degseq1}]  \thlab{conj2}  If  $p(1-p)n^2= \omega \log n$ then $p(n)$ is acceptable.  \end{conj}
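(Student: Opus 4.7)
The plan is to derive \thref{conj2} from the stronger \thref{conj1} via a short reduction, so the real work will be in proving \thref{conj1} itself. I describe the reduction first and return to \thref{conj1} at the end. Let $R_p(n)$ be the set of integer sequences $\dv$ of length $n$ with even sum that satisfy the hypotheses of \thref{conj1} (tightened mildly to obtain an $n^{-\omega}$ tail). It suffices to show that (a) $R_p(n)$ has probability $1-n^{-\omega}$ in each of $\Ep$ and $\cD(\Gnp)$, and (b) the asymptotic \eqref{conjForm} is equivalent to \eqref{formula1} for $\dv\in R_p(n)$.

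For (a), in both models each coordinate $d_i$ is marginally Bin$(n-1,p)$: exactly in $\cD(\Gnp)$, and in $\Ep$ up to an $O(1)$ correction from conditioning on even sum. Standard binomial tail bounds applied with an $n^{-\omega}$ target and union-bounded over the $n$ coordinates confine $\max_j|d_j-d|$ to the regime required by the hypothesis of \thref{conj1}.

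For (b), write $\pr_{\cD(\Gnp)}(\dv)=g(\dv)\,p^m(1-p)^{\binom{n}{2}-m}$ with $2m=\sum_id_i$, and observe directly from the definitions of $\Ep$ and $\Epp$ that
\begin{equation*}
\pr_{\Epp}(\dv) = \frac{\prod_i\binom{n-1}{d_i}}{\binom{n(n-1)}{2m}}\binom{\binom{n}{2}}{m} p^m(1-p)^{\binom{n}{2}-m}.
\end{equation*}
Substituting \eqref{conjForm} for $g(\dv)$ and cancelling reduces \eqref{formula1} to the Stirling identity
\begin{equation*}
\frac{\binom{\binom{n}{2}}{m}}{\binom{n(n-1)}{2m}} \sim \sqrt{2}\bigl(\mu^\mu(1-\mu)^{1-\mu}\bigr)^{n(n-1)/2},
\end{equation*}
which follows routinely from $m=\mu\binom{n}{2}$.

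The main obstacle therefore is \thref{conj1} itself. Switchings on the configuration model (the sparse-regime technique) and saddle-point analysis of complex integrals (the dense-regime technique) do not reach the intermediate densities $\sqrt n \lesssim d \lesssim n/\log n$, and the paper explicitly advertises a new approach. The strategy I would pursue is degree-climbing: bound the ratio $g(\dv+\ve_i+\ve_j)/g(\dv)$ via carefully designed switchings with multiplicative error $1+O(n^{-\alpha})$, and telescope polynomially many such increments to move any near-regular $\dv$ to a reference sequence where $g$ is already known from a boundary case. The hard technical core will be sustaining the precision of these switching estimates uniformly across the intermediate range, plausibly by establishing auxiliary joint concentration for subgraph statistics (numbers of $2$-paths, common neighbours, etc.) within a careful inductive framework.
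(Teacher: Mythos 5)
Your reduction of \thref{conj2} to \thref{conj1} is essentially sound, and the paper itself notes that this implication holds (via the argument of~\cite[Theorem~2.5]{degseq1}): your computation of $\pr_{\Epp}(\dv)$ is correct, the Stirling identity is routine, and tail bounds plus a union bound do place the typical sequences of $\Ep$ and $\cD(\Gnp)$ inside the hypothesis of \thref{conj1} (you would still need the uniformity-over-$\dv$ version of \thref{conj1}, obtained by the compactness remark following its statement, since acceptability demands a single $o(1)$ error over all of $R_p(n)$). But this reduction is not where the content lies, and the proposal leaves the entire substance --- the enumeration formula in the gap range $\sqrt n \lesssim d \lesssim n/\log n$ --- unproved. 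The ``degree-climbing'' sketch does not work as stated: to telescope ratios $g(\dv+\ve_i+\ve_j)/g(\dv)$ from a known boundary density up to density $d$ requires $\Theta(nd)$ increments, so the per-step multiplicative error must be $o(1/nd)$ (around $n^{-2}$ near the top of the gap), whereas direct switching estimates of such ratios carry relative errors of order roughly $\Delta^2/dn \approx d/n$ --- many orders of magnitude too coarse. This precision barrier is exactly why switchings and saddle-point integrals failed to close the gap for decades; invoking ``carefully designed switchings'' and ``auxiliary joint concentration for subgraph statistics'' names the difficulty rather than resolving it. The missing idea in the paper is the mechanism that manufactures the needed precision: recursive identities for the edge probabilities $P_{av}$ and the same-total-degree ratios $R_{ab}(\dv)=\Num(\dv-\ea)/\Num(\dv-\eb)$ (Proposition~\ref{l:recurse}, Lemma~\ref{2Path}), recast as operators whose fixed point is approached by a contraction-type argument (Section~\ref{sec:FunOps}), so that crude a priori switching bounds get bootstrapped to relative error $O(\mu\eps^4)$; only then is a telescoping comparison performed, via Lemma~\ref{l:lemmaX}, over a graph of \emph{typical} degree sequences of diameter $O(n\sqrt d)$.

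A second, strategic point: proving \thref{conj2} does not require \thref{conj1}, and routing through it makes your task strictly harder. Acceptability only concerns a set $R_p(n)$ of probability $1-n^{-\omega}$, i.e.\ sequences with spread $O(\sqrt{d\log n})$ and $\sigma^2=O(d)$; the paper proves the formula just for these (Theorem~\ref{t:handCalc}), which is what makes a hand-checkable expansion (Lemma~\ref{l:maple}) suffice, and deduces \thref{conj2} directly by letting the constant $C$ in the spread bound grow slowly, combined with the previously known sparse and dense cases. The full \thref{conj1} needs spreads up to $d^{\alpha}$ with $\alpha$ up to $3/5$ and correspondingly finer expansions (Lemma~\ref{l:mapleHigher}, done with computer algebra). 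As it stands, then, your proposal is a correct but routine outer reduction wrapped around an unproved core, and the sketched plan for that core would founder on the precision requirement identified above.
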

  
The fact that  the exponential factor in~\eqn{formula1} is sharply concentrated near 1 was used to prove~\cite[Theorem 2.6(b)]{degseq1} (see Section~3 of that paper), which we do not state here. The first part of~\cite[Theorem 2.6(b)]{degseq1} gives bounds on the differences between the expected values of random variables in $\cD(\Gnp)$ and the model $\cB_{\hat p}(n)\mid_{\Sigma {\text{ is even}}}$ 
of \thref{p:model}(ii)  for any acceptable $p$. The second part of that theorem similarly bounds   the difference between expectations in
$\cD(\Gnm)$  and $\cB_m(n)$, for any $m$ such that $2m/n(n-1)$ is acceptable. In particular,   \thref{conj2}   implies Proposition~\ref{p:model} via~\cite[Theorem 2.6(b)]{degseq1} with $X_n$ defined as the indicator of the event $H_n$ that appears in the definition of a.q.e.  We omit further details of this, since for applications of our main result, one can look at all the consequences of Conjecture~\ref{conj2} in~\cite{degseq1}, of which  Proposition~\ref{p:model} is just an example. In particular, further results in~\cite[Section~4]{degseq1} show that for  many properties in $\Gnp$,  the conditioning on parity in Proposition~\ref{p:model}(ii) has negligible effect, so one can consider purely independent binomials. Sometimes the  integration  required to deal with the distribution of  $\hat p$ also has negligible effect, and in any case,~\cite[Theorems 3.7 and 3.8]{degseq1} give some general results on carrying out the integration. To our knowledge, there are so far no detailed applications of the consequence for $\Gnm$ given in \thref{p:model}(i).

It was shown in~\cite[Theorem~2.5]{degseq1}  
that $p$ is acceptable if either $\omega \log n /n^2 \le p(1-p) = o(n^{-1/2})$ or $p(1-p)\ge c/\log n$ for some $c>2/3$, using  the known enumeration results in the respective range. The essence of the proof of~\cite[Theorem~2.5]{degseq1}  shows that if~\eqn{conjForm} of Conjecture~\ref{conj1} holds for all $d$ near some $d_0$, then by known concentration results,  $p=d_0/(n-1)$ is acceptable. The same considerations prove that Conjecture~\ref{conj2} follows from Conjecture~\ref{conj1}.

In this paper, we prove 
Conjecture~\ref{conj1} and hence Conjecture~\ref{conj2} and \thref{p:model} in the remaining gap range. 
Our main theorem is the following.
\begin{thm}\thlab{t:graphwide}
Let $\mu_0>0$ be a sufficiently small constant, and let $1/2 < \alpha<3/5$. 
Let $n$ and $m$ be integers and set $d=2m/n$ and 
assume that $\mu = d/(n-1)$ satisfies $ \mu \leq \mu_0$ and, for all fixed $K>0$, $(\log n)^K/n =o(\mu)$. 
Let $\D$ be the set of sequences $\dv$ of length $n$ satisfying 
$\sum_i d_i = 2m$ and $\size{d_i - d} \le d^{\alpha}$ for all $i\in[n]$. 
Then 
uniformly for all $\dv\in\D$ we have 
$$
\Pr_{\cD(\G(n,m))}(\dv) 
	= \Pr_{\cB_m}(\dv) \exp\Big( {1\over 4} - {\gtwo^2\over 4\mu^2(1-\mu)^2} \Big) 
	\left(1+ 
	O\left( \frac{\log^2n}{\sqrt{n}}+
	d^{5\alpha-3} 
	 \right)
	 \right).
$$
\end{thm}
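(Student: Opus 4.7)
The plan is to reduce the probability statement to a precise asymptotic formula for the count $g(\dv)$, since
\[
\Pr_{\cD(\cG(n,m))}(\dv) \;=\; \frac{g(\dv)}{\binom{\binom{n}{2}}{m}},
\qquad
\Pr_{\cB_m}(\dv) \;=\; \binom{n(n-1)}{2m}^{-1}\prod_{i=1}^n\binom{n-1}{d_i}.
\]
So, after Stirling, the theorem is equivalent to the asymptotic formula \eqref{conjForm} of Conjecture~\ref{conj1}, restricted to $\dv\in\D$ and with explicit error $O(\log^2 n/\sqrt n + d^{5\alpha-3})$. The whole proof is therefore about enumerating graphs with a prescribed degree sequence in the moderately sparse regime.

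My plan is to use a \emph{switching} approach: pick a near-regular reference sequence $\dv^{\ast}\in\D$ whose entries are all in $\{\lfloor d\rfloor,\lceil d\rceil\}$, and compute $g(\dv)/g(\dv^{\ast})$ by a sequence of local modifications. The elementary step is a pairwise degree shift $\dv\to\dw$ where $\dw_i=\dv_i-1$, $\dw_j=\dv_j+1$: a ``switching'' between graphs with degree sequence $\dv$ and those with degree sequence $\dw$ consists of deleting an edge $iu$ and adding a non-edge $ju$ (plus suitable corrections to preserve simplicity). Counting configurations in both directions, I would aim for
\[
\frac{g(\dw)}{g(\dv)} \;=\; \frac{d_i\,(n-1-d_j)}{(d_j+1)\,(n-d_i)}\bigl(1+\mathrm{error}\bigr),
\]
with the error expressed in terms of local second-moment quantities of $\dv$. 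Telescoping along a path of such shifts from $\dv^{\ast}$ to $\dv$ produces the main term
\[
\prod_i\binom{n-1}{d_i}\Big/\prod_i\binom{n-1}{d^{\ast}_i},
\]
while second-order corrections, summed over the path, combine with the deviation $\gtwo$ to yield the $\exp\!\bigl(-\gtwo^2/(4\mu^2(1-\mu)^2)\bigr)$ factor. The $e^{1/4}$ factor is then supplied by a direct computation of $g(\dv^{\ast})$ in the near-regular case, using that the $d$-regular count is well understood in this density range via configuration-model-with-conditioning arguments.

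The main obstacle is the accuracy of the single-switch estimate. Because $\dv$ can differ from $\dv^{\ast}$ in up to $\Theta(nd^{\alpha})$ coordinates, each pair of shifts has to be computed with error $O(1/(nd^{\alpha}))$ or better, well below what a naive switching argument produces. This forces a refined analysis in which one keeps track not only of the leading switching ratio but also of the next-order ``curvature'' term of $\log g$ as a function of $\dv$; it also forces a careful treatment of the dependencies between switches performed at neighbouring vertices, as the path through $\D$ has to be chosen to avoid systematic bias. The term $d^{5\alpha-3}$ in the error comes precisely from the cubic truncation of this expansion, and its management is presumably the heart of the ``new approach'' advertised in the introduction.

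Finally, once $g(\dv)/g(\dv^{\ast})$ is in hand with the stated accuracy, I would assemble the formula, divide by $\binom{\binom{n}{2}}{m}$, and compare with $\Pr_{\cB_m}(\dv)$ via the identity \eqref{formula}. The term $\log^2 n/\sqrt n$ in the error is what one expects from a central-limit-type estimate used somewhere in the argument---most likely in bounding the fluctuations of the number of short forbidden configurations (e.g.\ common neighbours or triangles through a chosen edge) that appear in the switching analysis, since these are sums of many weakly dependent indicators and concentrate at the $\sqrt n$ scale up to log factors.
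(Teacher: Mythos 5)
Your overall skeleton (compare $g(\dv)$ for adjacent degree sequences, telescope along a path through $\D$, and recover the two exponential factors from second-order corrections) does resemble the paper's strategy of estimating $R_{ab}(\dv)=\Num(\dv-\ea)/\Num(\dv-\eb)$ and telescoping via Lemma~\thref{l:lemmaX}, but there are two genuine gaps that would sink the argument as proposed. First, you anchor the telescoping at a near-regular sequence $\dv^{\ast}$ and propose to ``supply the $e^{1/4}$ factor by a direct computation of $g(\dv^{\ast})$\dots using that the $d$-regular count is well understood in this density range.'' It is not: for $c\sqrt{n}\le d=o(n/\log n)$ the asymptotics of the $d$-regular count were open since 1990 and are exactly what \thref{t:graphwide} establishes; configuration-model-with-conditioning arguments fail here because the probability of simplicity is super-exponentially small. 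So this step is circular. The paper avoids ever computing an absolute count: it compares the two \emph{probability measures} $\cD(\G(n,m))$ and the model $\cS$ built from $\cB_m$ weighted by $\widetilde H$, and the normalisation in Lemma~\thref{l:lemmaX} (both measures give $\W$ probability close to $1$, and $\ex_{\cB_m}\widetilde H=1+O(\log^2 n/\sqrt n)$ by concentration of $\sigma^2$) is what delivers the constant $e^{1/4}$ for free. Incidentally, this is also where the $\log^2 n/\sqrt n$ error actually comes from, not from fluctuations of forbidden configurations in switchings.

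Second, you correctly identify that each single-shift ratio must be known to relative error $o(1/(nd^{\alpha}))$, i.e.\ roughly $O(\mu\eps^4)$ with $\eps=d^{\alpha-1}$, but you offer no mechanism to achieve it: a direct switching count of the kind you describe loses control at order $\Delta^2/(dn)$ or so, orders of magnitude too coarse, and ``keeping track of the next-order curvature term'' is a wish, not an argument, because that curvature is itself expressed through edge and two-path probabilities $P_{av}(\dv)$, $P_{avb}(\dv)$ of the random graph with the given degrees, which are not known a priori to the required precision. The heart of the paper is precisely this point: it derives exact recursive identities expressing $P$ and $R$ in terms of each other (Proposition~\thref{l:recurse}, Lemma~\thref{2Path}), turns them into operators whose composition is contractive near the fixed point (Corollary~\thref{c:contraction}), and verifies by expansion that explicit candidates $\Pgr,\Rgr$ are fixed to within $O(\mu\eps^4)$ (Lemma~\thref{l:mapleHigher}); only then does the telescoping with diameter $r=O(nd^{\alpha})$ give $r\delta=O(d^{5\alpha-3})$. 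Without a substitute for that bootstrapping step, your proposal does not yield the stated error term.
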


\begin{cor}\thlab{c:main}
Conjectures~\ref{conj1} and~\ref{conj2}  are both true.
\end{cor}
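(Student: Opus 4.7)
The plan is to deduce Conjecture~\ref{conj1} by pasting together three density regimes and then to extract Conjecture~\ref{conj2} as a formal consequence. In the sparse regime $d=o(\sqrt n)$ the asymptotic formula~\eqref{conjForm} is already known from~\cite{MWlow}; in the dense regime $cn/\log n \leq d \leq n/2$ it was established in~\cite{MWhigh}; and the intermediate gap is precisely the domain of Theorem~\ref{t:graphwide}. First I would rewrite the conclusion of that theorem via Stirling's formula together with the value of $|\G(n,m)|$, converting~\eqref{formula} into the form~\eqref{conjForm}. For $d > n/2$ I then appeal to the complementation bijection $\dv \mapsto (n-1-d_1,\ldots,n-1-d_n)$, under which $g(\dv)$ is invariant and the right-hand side of~\eqref{conjForm} is unchanged by the $\mu \leftrightarrow 1-\mu$ symmetry of the formula.

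The main bookkeeping is to verify that the three ranges together cover every $\dv$ admissible in Conjecture~\ref{conj1} for a suitably chosen absolute $\eps > 0$. On the $d$-axis this is immediate: $(\log n)^K = o(\sqrt n)$ for every fixed $K$, so Theorem~\ref{t:graphwide} overlaps the sparse range, and $\mu_0 n \gg cn/\log n$, so it overlaps the dense range. The deviation bound requires a short calculation: one wants $n^\eps d^{1/2} \leq d^\alpha$ for some $1/2 \leq \alpha < 3/5$, i.e.\ $d \geq n^{\eps/(\alpha-1/2)}$. Fixing, say, $\alpha = 11/20$ and $\eps = 1/100$ makes this hold once $d \geq n^{1/5}$, which sits comfortably inside the overlap with the sparse range, so sequences of smaller average degree fall under~\cite{MWlow} instead. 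This $\eps$ then serves as the absolute constant promised in Conjecture~\ref{conj1}.

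The implication Conjecture~\ref{conj1} $\Rightarrow$ Conjecture~\ref{conj2} is already carried out in the proof of~\cite[Theorem~2.5]{degseq1}, and is sketched in the paragraph following the statement of Conjecture~\ref{conj2} in the introduction. One takes $R_p(n)$ to be the set of sequences in which every $d_i$ lies within $O(\sqrt{np(1-p)\log n})$ of $(n-1)p$, which under $p(1-p)n^2 = \omega \log n$ has probability $1-n^{-\omega}$ in both $\cD(\Gnp)$ and $\Ep$ by standard concentration; on this set the hypotheses of Conjecture~\ref{conj1} hold uniformly, so~\eqref{formula1} follows from~\eqref{conjForm} by the same Stirling calculation as above. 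I do not foresee a substantive obstacle here: the difficult analytic work has been done in Theorem~\ref{t:graphwide}, and what remains is range-matching together with a standard concentration bound.
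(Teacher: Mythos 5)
Your proposal is correct and takes essentially the same route as the paper: Conjecture~\ref{conj1} is obtained by converting the conclusion of Theorem~\ref{t:graphwide} into the form~\eqref{conjForm} via $\size{\G(n,m)}$ and Stirling, patching it with the known sparse and dense formulae of~\cite{MWlow} and~\cite{MWhigh} plus complementation (your range-matching with $\alpha=11/20$, $\eps=1/100$, threshold $d\ge n^{1/5}$ checks out), and Conjecture~\ref{conj2} then follows by the argument of~\cite[Theorem~2.5]{degseq1}, exactly as the paper indicates. One minor caution: your explicit choice of $R_p(n)$ with deviations $O(\sqrt{np(1-p)\log n})$ only has probability $1-n^{-\omega}$ when $np(1-p)$ is at least polylogarithmic, so it should be invoked only in the gap range, with the extreme ranges of $p$ handled by the acceptability results already proved in~\cite{degseq1}.
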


In Section~\ref{s:completinggraphs} we provide a hand-checkable proof of
\thref{t:graphwide} under the restriction that
$\max\{\size{d_i - d}\}=O(\sqrt{d\log n})$ and
$\sum_i (d_i - d)^2 \leq 2dn$. (See \thref{t:handCalc}.)
As a corollary of this and known results, we obtain Conjecture~\ref{conj2}. 
Extending the expansions of functions involved to more terms using computer algebra lets us  obtain a proof of \thref{conj1}.

Our method is markedly different from those previously used for this problem, the most successful of which used either the ``configuration model'' with ``switchings'', or the estimation of integrals representing coefficients of appropriate generating functions. Instead, we 
derive a set of three equations relating the numbers of graphs with almost the same degree sequence, and the probability that they contain a given edge or a given pair of incident edges. The equations are derived by examining the operation of moving one end of an edge from one vertex to another, and the operation of deleting an edge. These equations are analysed from the perspective of fixed points of a contraction mapping. 

Note that if we build up a random graph by inserting $m$ edges using the method of choosing each endpoint of each edge independently at random, the resulting multigraph has   exactly the multinomial degree distribution, that is,  $\cB_m(n)$ (see for example Cain and Wormald~\cite{encore}). The only difficulty is that the result can contain  loops and multiple edges. For Proposition~\ref{p:model}, one would need  to show that conditioning on the absence of  loops and multiple edges does not significantly affect the degree distribution. This is difficult since the probability of no loop or multiple edges occurring is extremely small even for moderate densities. We believe that our approach to this enumeration problem gives an intuitive explanation  of why the 
distribution of degrees in a random graph is so close to the binomial model, since it shows directly that the distribution of graph counts behaves ``locally" in the appropriate way.   
Such an explanation is absent from the main methods used previously, in~\cite{MWlow} and~\cite{MWhigh}.

As a by-product of our proof, we obtain asymptotic formulae for the edge probabilities in a random graph with a given degree sequence. For a sequence $\dv$ of length $n$,
 let $\G(\dv)$ be a graph chosen uniformly at random from all graphs that have degree sequence $\dv$, and let $\Pr_{ab}(\dv)$ be the probability that the edge $v_av_b$ is present in $\G(\dv)$.

\begin{thm}
	Let  $\D$ be as in \thref{t:graphwide} and let $a,b\in[n]$, $a\neq b$. 
	Then for all $\dv\in \D$ 
	\begin{align*}
	\Pr_{ab}(\dv) &= \frac{d_ad_b}{d(n-1)}
	\left( 1- \frac{(d_a-d)(d_b-d)}{d(n-1-d)}\right) 
	+O\left(  \frac{\sqrt{d \log n}}{n^2}+\frac{(\sqrt{d \log n})^3}{n^3}\right).
	\end{align*}
\end{thm}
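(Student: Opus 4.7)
The natural starting point is the \emph{edge-deletion bijection}: removing the edge $v_a v_b$ from a graph of degree sequence $\dv$ containing that edge produces a graph of degree sequence $\dv-\ea-\eb$ that avoids $v_a v_b$, and this is a bijection onto such graphs. Writing $N_{ab}(\dv)$ for the number of graphs with degree sequence $\dv$ containing the edge $v_av_b$, we therefore obtain the exact identity
\begin{equation*}
N_{ab}(\dv) \;=\; g(\dv-\ea-\eb) \;-\; N_{ab}(\dv-\ea-\eb).
\end{equation*}
Dividing by $g(\dv)$ gives a self-consistent recurrence for the edge probability:
\begin{equation*}
\Pr_{ab}(\dv) \;=\; \lambda(\dv)\bigl(1 - \Pr_{ab}(\dv-\ea-\eb)\bigr), \qquad \lambda(\dv) \;:=\; \frac{g(\dv-\ea-\eb)}{g(\dv)}.
\end{equation*}

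The first main step is to extract $\lambda(\dv)$ from \thref{t:graphwide} with sufficient precision. The ratio of the binomial products $\prod_i\binom{n-1}{d_i}$ is exactly $d_a d_b/\bigl((n-d_a)(n-d_b)\bigr)$. The power factor $\bigl(\mu^\mu(1-\mu)^{1-\mu}\bigr)^{n(n-1)/2}$ contributes $(1-\mu)/\mu$ at leading order, with a subleading correction of size $\exp\bigl(1/(n(n-1)\mu(1-\mu))\bigr)$ arising from the second Taylor coefficient of $\mu\log\mu+(1-\mu)\log(1-\mu)$ at the shift $\mu'=\mu-2/(n(n-1))$. Finally, because $d$, $\mu$ and $\gtwo$ all shift slightly when $\dv\mapsto\dv-\ea-\eb$, the factor $\exp\bigl(-\gtwo^2/(4\mu^2(1-\mu)^2)\bigr)$ contributes a further multiplicative correction; using $\sum_i(d_i-d)=0$, a careful expansion gives a factor of the form $1 + O\bigl((|d_a-d|+|d_b-d|)/n^2\bigr)$ together with a cubic-in-deviations contribution that will match the second error term in the theorem.

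The second step is to iterate the recurrence. Because $d=o(n)$ and $\mu_0$ is small, $\lambda(\dv_k)=O(d/n)$ is uniformly bounded well below $1$ along the chain $\dv_k:=\dv-k(\ea+\eb)$, and the iteration terminates naturally once $\min(d_a,d_b)-k$ reaches zero, giving the finite alternating sum
\begin{equation*}
\Pr_{ab}(\dv) \;=\; \sum_{k\ge 0}(-1)^k\,\lambda(\dv_0)\lambda(\dv_1)\cdots\lambda(\dv_k).
\end{equation*}
Because $\lambda$ depends smoothly on the degree sequence, each $\lambda(\dv_k)$ differs from $\lambda(\dv)$ by a factor $1+O(1/d)$, so this sum is asymptotic to $\lambda(\dv)/(1+\lambda(\dv))$ with the required precision. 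Substituting the expansion of $\lambda$ and simplifying the resulting rational expression in $d_a,d_b,d,n$, using $\mu=d/(n-1)$ and $1-\mu=(n-1-d)/(n-1)$, produces the leading term $d_a d_b/(d(n-1))$ together with the stated correction factor $1-(d_a-d)(d_b-d)/(d(n-1-d))$.

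The main obstacle is bookkeeping: the correction $-(d_a-d)(d_b-d)/(d(n-1-d))$ is a second-order phenomenon that appears only when the $\gtwo$-dependent factor in \thref{t:graphwide} is expanded keeping terms quadratic in the deviations $d_a-d$ and $d_b-d$, and the cubic error $(\sqrt{d\log n})^3/n^3$ corresponds to the first omitted order of this expansion. Coordinating the Taylor expansion of each of the three factors in the asymptotic formula, with the smoothness estimate for $\lambda$ under the iteration, so that none of them inflate beyond the stated error, is the most delicate point of the argument.
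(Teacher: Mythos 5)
Your starting identity is the paper's Lemma~\thref{trick17}, and the alternating iteration is in the spirit of Lemma~\thref{2Path}, so the skeleton is sound; the fatal step is the first one, where you propose to read the ratio $\lambda(\dv)=g(\dv-\ea-\eb)/g(\dv)$ off Theorem~\thref{t:graphwide}. That theorem determines $g(\dv)$ only up to a relative error $O\big(\log^2 n/\sqrt n + d^{5\alpha-3}\big)$, and the error factors at $\dv$ and at $\dv-\ea-\eb$ are unrelated within that bound, so the ratio inherits the same relative uncertainty (at least $d^{-1/2}$, since $5\alpha-3\ge -1/2$). But the statement asks for $\Pr_{ab}(\dv)$, whose main term is of order $d/n$, to additive precision $O\big(\sqrt{d\log n}/n^2+(\sqrt{d\log n})^3/n^3\big)$, i.e.\ to relative precision $O\big(\sqrt{\log n/d}\,/n+\sqrt d\,\log^{3/2}n/n^2\big)$; even the correction term $(d_a-d)(d_b-d)/(d(n-1-d))$ you need to capture is only $O(d^{2\alpha-1}/n)$, far below the resolution of the enumeration formula. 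The same objection kills the smoothness claim $\lambda(\dv_k)=\lambda(\dv)(1+O(1/d))$, which you also try to source from the enumeration formula. Note moreover that, by Lemma~\thref{trick17}, $\lambda(\dv)=P_{ab}(\dv)/\big(1-P_{ab}(\dv-\ea-\eb)\big)$: knowing $\lambda$ to the required precision is exactly as hard as the theorem itself, so no amount of careful Taylor bookkeeping of the three factors in~\eqn{conjForm} can close this gap.

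The paper's logic runs in the opposite direction: the edge probabilities are an input to (indeed the core of) the proof of Theorem~\thref{t:graphwide}, not a corollary of it. Concretely, the recursive identities of Proposition~\thref{l:recurse} and Lemma~\thref{2Path} are turned into the operators $\Pc,\Rc,\Cc$ of Section~\ref{sec:FunOps}; the near-contraction property (Corollary~\thref{c:contraction}) plus the verification that the explicit function $\Pgr$ of~\eqn{Pgrdef} is an approximate fixed point (Lemma~\ref{l:maple}, and Lemma~\ref{l:mapleHigher} for the wider degree spread) give $P_{av}(\dv)=\Pgr_{av}(\dv)\big(1+O(\eta_1+\eta_2)\big)$, equation~\eqn{Peq} in Claim~\thref{RDense} (and~\eqn{Peqnew} in Section~\ref{s:wider}); multiplying the relative error $\eta_1+\eta_2$ by $\Pgr=O(d/n)$ yields exactly the additive error in the statement, and crucially the ratios the paper does estimate, $R_{ab}(\dv)=\Num(\dv-\ea)/\Num(\dv-\eb)$, compare sequences of the \emph{same} total degree via degree switchings, which is where the $o(1/n)$-type precision comes from. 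The enumeration formula is then deduced from these local estimates via Lemma~\thref{l:lemmaX}. To repair your write-up you would have to replace the appeal to Theorem~\thref{t:graphwide} by the paper's fixed-point estimates (or an equivalent switching argument of comparable precision); as written, the argument cannot produce either the correction factor or the stated error term.
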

We also give a more accurate but more complicated formula for $\Pr_{ab}(\dv) $ in Lemma~\ref{l:mapleHigher}(ii), and an approximation for sparser degree sequences in~\eqn{Pavsparse}. Our results also provide formulae for the probability that a given path of length 2 occurs in the same graphs.

In Section~\ref{s:prelim} we provide the notation we use in this paper and some preliminary results. 
In Section~\ref{s:recursive} we derive certain recursive formulae that are the core of our method. 
In Section~\ref{s:sparseGraphs} we reprove enumeration results for the sparse case to illustrate how the recursive formulae from Section 3 are to be used to obtain explicit formulae. We also take this opportunity to provide a general template on how our proofs in the later parts are structured. 
In Section~\ref{sec:FunOps} the recursive formulae are turned into operators. Section~\ref{s:completinggraphs} contains the proof of \thref{t:graphwide} under stricter assumptions on the degree spreads, and gives the proof of \thref{conj2}, all using easily hand-checkable calculations. As explained above, this is enough to deduce our first major goal, \thref{p:model}. In Section~\ref{s:wider} we provide the full proof for \thref{t:graphwide}. We finish the paper with some concluding remarks in Section~\ref{s:final}, and the Appendix gives details of some routine calculations.

\subsection{Further comments} \lab{s:further}
 
 Studying the degree sequence of a random graph is not the only significant potential use of enumeration formulae for graphs with given degrees. A large number of properties of random regular graphs have been shown using them, in particular, results on subgraphs of random regular graphs or random graphs with given degrees. (McKay~\cite{McKay_ICM} refers to many  examples.) The asymptotic distribution of the number of edges in the giant component of the random graph, and the size of its 2-core, were first obtained by Pittel and Wormald~\cite{pw2005} with heavy use of these formulae.  
Additionally, Kim and Vu~\cite[Equation (3.1)]{KV} heavily used the asymptotic formula for the number of $d$-regular graphs, in the known range, to give strong relations between random graphs and random regular graphs. The lack of a formula in the denser case restricted their work to the sparse case. The results in the present paper could have helped here, and would presumably have simplified the extension of Kim and Vu's work to denser cases, given recently by  Dudek, Frieze, Ruci{\'n}ski and {\v S}ileikis~\cite{dfrs2017}.

Various results on random $d$-regular graphs \cite{bky2017,ksvw2001} use the configuration model and a sophisticated analysis of switchings, which are methods developed for asymptotic enumeration of graphs by degree sequence when the degrees are relatively small. The methods  from this paper may well have similar applications, even in cases where the enumeration formulae themselves do not apply.
 
We note that Isaev and McKay~\cite{IM} have given a major further development  of the methods in~\cite{MWhigh} and~\cite{BH} to obtain further results useful in the case of very dense graphs. They have  also (unpublished) announced progress in pushing this method towards sparser cases.  Also after our project was   under way, Burstein and Rubin~\cite{br2015}   presented an idea of comparing numbers of graphs with given degree sequences that is somewhat related to the techniques in the present paper, but differs in several significant ways. Their approach would give a formula that is valid up to maximum degree $n^{1-\delta}$ for any fixed $\delta>0$, using a finite amount of computation.    However, their general result is not explicit enough  to enable the derivation of results as simple as the formula~\eqn{conjForm}. In particular, we are not aware of any claims of extending the range of validity of the Binomial Approximation Conjecture, apart from the present paper.


\section{Preliminaries and Notation.}
\lab{s:prelim}

For the reader's convenience, we solidify some notation here. Our {\em graphs} are simple, that is, they have no loops or multiple edges. We write    $a\sim b$ to mean that $a/b\to 1$,    $f=O(g)$ if $|f|\le Cg$ for some constant $C$,  and $f=o(g)$ if $f/g\to 0$.  We use $\omega$ to mean a function going to infinity, possibly different in all instances. 
We use $1\pm \xi$ to denote a quantity in the interval $[1-\xi,1+\xi]$.
Also   $\binom{[n]}{2}$   denotes the set of $2$-subsets of the set $[n]=\{1,\ldots , n\}$. 
We often consider a vector $\dv = (d_1,\ldots, d_n)$, and use $\Delta$ or $\Delta(\dv)$ to denote $\max_i d_i$, in line with the notation for maximum degree of a graph. 
Recall also from the introduction that $g(\dv)$ denotes the number of graphs whose degree sequence is $\dv$,  $\mu= \mu ( \dv)=d/(n-1)$  where $d=d(\dv) = \frac{1}{n}\sum_{i=1}^n d_i$, and  $\gamma_2=(n-1)^{-2}\sum_{i=1}^n (d_i-d)^2$. 
We write $M_1=M_1(\dv)=\sum_{i=1}^n d_i$. If $\dv$ is the 
 degree sequence of a graph, this is the total degree, or twice the number of 
edges.
Parity is an important issue, so we say a vector $\dv$ is {\em even} if $M_1(\dv)$ is even, and {\em odd} otherwise.
Finally,  in this paper  multiplication by juxtaposition has precedence over ``$/$", so for example $j/\mu n^2= j/(\mu n^2)$.

We first  state a simple result by which we leverage an enumeration result from comparisons of related numbers.  
 
 \begin{lemma}\thlab{l:lemmaX} 
Let  $\cS$ and $\cS'$ be  probability spaces with the same underlying set $\Omega$.  
Let $G$ be a graph with vertex set  $\W\subseteq \Omega$  such that $ \pr_\cS(v),\pr_{\cS'}(v) >0$  for all $v\in \W$. Suppose that  $\eps_0,  \delta >0$    such that  
 $\min\{ \pr_\cS(\W), \pr_{\cS'}(\W)\}>1-\eps_0>1/2$,  and such that  for every edge $uv$ of $G$,  
$$
\frac{\pr_{\cS'}(u)}{\pr_{\cS'}(v)}=   e^{O( \delta )}  \frac{\pr_{\cS}(u)}{\pr_{\cS}(v)}   
$$
where the constant implicit in $O(\cdot)$ is  absolute. 
 Let $r$ be an upper bound on the diameter of $G$ and assume $r<\infty$. Then for each $v\in \W$ we have
 $$
  \pr_{\cS'}(v)  =e^{O(r \delta +\eps_0)}  \pr_\cS(v) ,
$$
with again a   bound uniform for all $v$.
\end{lemma}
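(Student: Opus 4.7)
The plan is to control the ratio $f(v) := \pr_{\cS'}(v)/\pr_\cS(v)$ uniformly on $\W$. The hypothesis gives us the relative variation of $f$ along edges; chaining along a path gives the variation across $\W$; the mass conditions will pin down the absolute scale.

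First I would rewrite the edge hypothesis as $f(u)/f(v) = e^{O(\delta)}$ for each edge $uv$ of $G$. Since $G$ has diameter at most $r$, any two vertices $u,v \in \W$ are joined by a path of length at most $r$, and multiplying the edge estimates along this path yields
$$\frac{f(u)}{f(v)} = e^{O(r\delta)} \quad \text{for all } u,v\in\W.$$
Setting $M := \sup_{v\in\W} f(v)$ and $m := \inf_{v\in\W} f(v)$, this gives $m \le M \le m\,e^{O(r\delta)}$.

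Next I would pin the absolute scale using the mass hypothesis. Since
$$\pr_{\cS'}(\W) = \sum_{v\in\W} f(v)\,\pr_\cS(v),$$
the upper bound $\pr_{\cS'}(\W)\le 1$ combined with $\pr_\cS(\W)\ge 1-\eps_0$ gives $m(1-\eps_0)\le 1$, so $m \le 1/(1-\eps_0)$. The lower bound $\pr_{\cS'}(\W)\ge 1-\eps_0$ combined with $\pr_\cS(\W)\le 1$ gives $M \ge 1-\eps_0$. Combining with the relative variation estimate,
$$(1-\eps_0)\,e^{-O(r\delta)} \;\le\; m \;\le\; f(v) \;\le\; M \;\le\; \frac{e^{O(r\delta)}}{1-\eps_0} \quad \text{for all } v\in\W.$$

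Finally, since $1-\eps_0 > 1/2$, we have $|\log(1-\eps_0)| = O(\eps_0)$, so $f(v) = e^{O(r\delta+\eps_0)}$ uniformly on $\W$, which is the claim. There is no genuine obstacle here; the only minor point of care is keeping track of the direction of the bounds in Step 3 (using both the upper and lower estimates on $\pr_{\cS'}(\W)$ and on $\pr_\cS(\W)$) so that one gets a two-sided bound on $f$ rather than just one side.
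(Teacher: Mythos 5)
Your proposal is correct and follows essentially the same route as the paper: chain the edge ratios along a path of length at most $r$ to control the oscillation of $\pr_{\cS'}(v)/\pr_{\cS}(v)$ over $\W$, then normalise using the bounds $\pr_{\cS}(\W),\pr_{\cS'}(\W)>1-\eps_0$ (the paper does this by summing the chained ratio over $u\in\W$, which is just a repackaging of your $m$/$M$ comparison). The final observation that $1-\eps_0>1/2$ gives $|\log(1-\eps_0)|=O(\eps_0)$ matches the paper's use of the same hypothesis.
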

 
\begin{proof} 
For any $u$, $v\in \W$ we may take  a telescoping product of ratios along a path joining $u$ to $v$ of length at most $r$. This gives 
$\pr_{\cS'}( u )/\pr_{\cS'}(v)=e^{O(r \delta )} \pr_{\cS}(u)/\pr_\cS(v)$. Summing over all $u\in \W$, gives 
$$
\frac{\pr_{\cS'}(\W)}{\pr_{\cS'}(v)}= \frac{e^{O(r \delta )}\pr_{\cS}(\W)}{ \pr_\cS(v)},
$$
and the claim follows using the lower bound $1-\eps_0$ on $\pr_{\cS}(\W)$ and  $\pr_{\cS'}(\W) $. 
 \end{proof}

Using the lemma calls for evaluating the ratios of probabilities in an ``ideal'' probability space, the one by which we are approximating the ``true'' space. This leads to computing ratios in the conjectured formulae. One we need several times is the following. Let $H(\dv)$ denote the conjectured formula in the right hand side of~\eqn{formula}, apart from the error term, that is
$$
H(\dv)=\pr_{\cB_m}(\dv) 
\exp\Big( {1\over 4} - {\gtwo^2\over 4\mu^2(1-\mu)^2} \Big).
$$
We use $\ea$ to denote the elementary unit vector with 1 in its $a^{\text{th}}$ coordinate. 
Noting that $\mu= d/ (n-1)$  where $d=M_1/n$  (a function of the degree sequence) has the same value  for  both cases $\dv-\ea$ and $\dv-\eb$,
 and recalling  $\gamma_2=\gamma_2(\dv)=  (n-1)^{-2}\sum_i (d_i-d)^2$,  we have (for {\em all}  odd  $\dv$ with $\Delta =\max d_i \le n/2 $  and $\mu>0$) 
\begin{align}
\frac{H(\dv-\ea)}{H(\dv-\eb)}&=\frac{d_a(n -d_b)}{d_b(n -d_a)}\exp\left(\frac{ (d_a-d_b)\big( \gamma_2+O(\Delta/n^2)\big)   }{(n-1)^2 (\mu')^2(1-\mu')^2 } \right)\nonumber\\
&=   \frac{d_a(n- d_b)}{d_b(n -d_a)}\exp\left(\frac{ (d_a-d_b)   \gamma_2  }{ d^2 (1-\mu')^2 } +O\big( \Delta^2/(dn)^2\big) \right). \lab{conjRatio}
\end{align}
 where $\mu'=\mu(\dv-\ea)=\mu(\dv-\eb)$, and of course $\gamma_2$, $d$, and $\Delta$ are defined with respect to $\dv$.

Next we turn to some  issues involving existence of graphs with a given degree sequence.
For counting purposes we will be considering graphs with vertex set     $V=[n]$.  Let $\cA=\cA(n)\se \binom{ [n] }{2}$ be a set which we call {\em allowable pairs}.  Note that as usual we regard the edge joining vertices $u$ and $v$ as the unordered pair $\{u,v\}$, and denote this edge by $uv$ following standard graph theoretic notation. 
A sequence $\dv:= (d_{1},\ldots,d_{n})$ is called  
{\em $\cA$-realisable}
if there is a graph $G$ 
on vertex set $V$ such that vertex $a\in V$ has degree $d_a$ and all edges of $G$ 
are allowable pairs. 
In this case, we say $G$ {\em realises $\dv$ over $\cA$}.  In standard terminology, if $\dv$ is  $\binom{V}{2}$-realisable, it is  {\em graphical}. Let $\cG_{\cA}(\dv)$ be the set of all graphs that realise $\dv$ over $\cA$. 
In this paper, we are particularly interested in the case that $\cA=\cA^{\mathrm{gr}} =\binom{V}{2}$. Then $\cG_\cA(\dv)$ is the set of all 
graphs $G$ on vertex set $[n]$ 
 that have degree sequence $\dv$. 
By using different definitions of $\cA$, it is also possible to model other enumeration problems. 
Throughout this paper, all graphs are finite and simple (i.e.~have no  loops or multiple edges).

Let $E\se \cA$, i.e.~a subset of the allowable edges. 
We write $\Num_E(\dv)$ and $\Num^*_E(\dv)$ 
for the number of graphs $G\in \cG_{\cA}(\dv)$ 
that contain, or do {\em not} contain, the edge set $E$, respectively.
We abbreviate $\Num_E(\dv)$ to $\Num_{ab}(\dv)$   if  $E={\{ab\}}$ (i.e.~contains the single edge $ab$), 
and put $\Num(\dv)=\size{\cG_{\cA}(\dv)}$. (When  $\Num$ and similar notation is used, the set $\cA$ should be clear by context.)

We pause for a notational comment. In this paper, a subscript  $ab$  is always  interpreted as an ordered pair $(a,b)$ rather than an edge (and similar for triples). This is irrelevant for $\Num_{ab}(\dv) =  \Num_{ba}(\dv)$ since the two ordered pairs signify the same edge, but the distinction is important with other notation. 

Let 
$$P_E(\dv)  =\frac{\Num_{E}(\dv)}{\Num(\dv)},$$ 
which is the probability 
that the edges in $E$ are present in a graph $G$ that 
is drawn uniformly at random from $\cG_{\cA}(\dv)$. 
Of particular interest are the probability of a single edge $av$ and a path $avb$, 
for which we simplify the notation to
$$
P_{av}(\dv) =P_{\{av\}}(\dv),\qquad \YPavb_{avb}(\dv) =P_{\{av, bv\}}(\dv) 
$$
where $a$, $b$ and $v$ are all distinct.
We will use the following trick several times to switch between degree sequences of differing total degree.      
\begin{lemma}\thlab{trick17}
Let $a v\in \cA$ and let $\dv$ be a sequence of length $N$. 
Then 
\begin{align*}
\Num_{av}(\dv) 
	&= \Num (\dv - \ea - \ev) -  \Num_{av} ({\bf d} - \ea - \ev)\\  
	&= \begin{cases} 
		\Num ({\bf d} - \ea - \ev) (1- P_{av}(\dv- \ea - \ev)) & \mbox{if } \Num({\bf d} - \ea - \ev) \ne 0\\ 
		0 & \mbox{otherwise.}
	\end{cases}
\end{align*}
\end{lemma}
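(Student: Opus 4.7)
The plan is to give a direct bijective argument. I would interpret $\Num_{av}(\dv)$ as the number of graphs $G \in \cG_\cA(\dv)$ containing the edge $av$, and then consider the map $G \mapsto G - av$ that deletes the edge $av$ from such a graph. Since $av$ is present in $G$, deletion reduces the degree of $a$ and of $v$ each by exactly one, so the resulting graph $G'$ has degree sequence $\dv - \ea - \ev$. Moreover $G'$ uses only edges of $\cA$ (we only removed one allowable edge), so $G' \in \cG_\cA(\dv - \ea - \ev)$, and by construction $G'$ does \emph{not} contain the edge $av$.

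Conversely, given any graph $G' \in \cG_\cA(\dv - \ea - \ev)$ that does not contain $av$, I would form $G = G' + av$. The pair $av$ is in $\cA$ by hypothesis, and it is not already an edge of $G'$, so $G$ is simple and uses only allowable pairs. The degrees of $a$ and $v$ each increase by one, so $G \in \cG_\cA(\dv)$ and $G$ contains $av$. Clearly these two maps are mutual inverses, so they establish a bijection
\[
\{G \in \cG_\cA(\dv) : av \in E(G)\} \longleftrightarrow \{G' \in \cG_\cA(\dv - \ea - \ev) : av \notin E(G')\}.
\]
Counting both sides yields $\Num_{av}(\dv) = \Num^*_{av}(\dv - \ea - \ev) = \Num(\dv - \ea - \ev) - \Num_{av}(\dv - \ea - \ev)$, which is the first displayed equality.

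For the second displayed equality, I would split on whether $\cG_\cA(\dv - \ea - \ev)$ is empty. If $\Num(\dv - \ea - \ev) \neq 0$, then by definition $\Num_{av}(\dv - \ea - \ev) = \Num(\dv - \ea - \ev) \cdot P_{av}(\dv - \ea - \ev)$, and substituting into the first equality gives the stated factorisation $\Num(\dv - \ea - \ev)(1 - P_{av}(\dv - \ea - \ev))$. If $\Num(\dv - \ea - \ev) = 0$, then trivially $\Num_{av}(\dv - \ea - \ev) = 0$ as well, and the first equality forces $\Num_{av}(\dv) = 0$, matching the second case.

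There is no real obstacle here; the only subtle point to state carefully is that the bijection requires both $av \in \cA$ (so adding the edge keeps us in $\cG_\cA$) and that $G'$ does not contain $av$ (so adding the edge yields a simple graph). Both are built into the statement, so the argument is genuinely routine.
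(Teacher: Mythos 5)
Your proof is correct and is essentially the same as the paper's: both rest on the edge-deletion bijection between graphs in $\cG_\cA(\dv)$ containing $av$ and graphs in $\cG_\cA(\dv-\ea-\ev)$ avoiding $av$, with the second equality following from the definition of $P_{av}$ (and the degenerate case handled exactly as in the paper). You simply spell out the bijection in more detail than the paper's one-line argument.
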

\begin{proof}
Removing an edge $av$ from a graph in $\cG_{\cA}(\dv)$ shows that the number of graphs with that edge is the same as the number of graphs in $\cG_{\cA}(\dv - \ea - \ev)$ and {\em no}  edge between $a$ and $v$. (The general form does not apply when $\Num ({\bf d} - \ea - \ev)=0$ only because $P_{av}(\dv- \ea - \ev)$ is then technically undefined.)
\end{proof}
For vertices $a,b \in V$, if $\dv$ is a sequence such that $\dv-\eb$ is $\cA$-realisable,   
we define  
$$
 R_{ab}(\dv)  =\frac{\Num(\dv-\ea)}{\Num(\dv-\eb)}.
$$ 
 With $\cA$ understood we write $\mu =\mu(\dv)= \frac12 M_1/  \size{\cA}$, which is the relative edge density of an $\cA$-realisable graph with degree sequence $\dv$. 
When $\cA$ is undefined, as in the introduction, then the default assumption is that $\cA=  \binom{ [n] }{2}$  (so that this definition of $\mu$ extends the one given in the introduction). 
Finally, for a vertex $a\in V$, we set $\cA(a) =\{v\in V: av \in \cA\}$, the ``projection'' of $\cA$ onto  
the edges incident with vertex $a$, and, with $\dv$ understood, we use $\cA^*(a)$ for the set of $v\in \cA(a)$ such that $\Num_{av}(\dv)>0$.

We can bound the probability of an edge in a simple way using the following switching argument.\begin{lemma}\thlab{l:simpleSwitching}
Let $\dv$ be a  graphical  sequence of length~$n$ with $\sum d_i=dn$. 
Then for any $a$ and $v$ in  $[n]$ we have 
$$
 P_{av}(\dv)\le \frac{\Delta^2}{dn \left(1-  \Delta(\Delta+2)/dn\right)}. 
$$
\end{lemma}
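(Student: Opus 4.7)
The plan is to use a standard edge-switching argument to bound the ratio $\Num_{av}(\dv)/\Num^*_{av}(\dv)$. Since $\Num^*_{av}(\dv)\le \Num(\dv)$, any bound of the form $\Num_{av}(\dv)/\Num^*_{av}(\dv)\le B$ immediately gives $P_{av}(\dv)\le B$, so it suffices to bound this ratio by the claimed expression.

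Define a switching on a graph $G\in\cG_{\cA}(\dv)$ containing $av$: choose an ordered pair $(x,y)$ of vertices with $xy\in E(G)$, $\{x,y\}\cap\{a,v\}=\emptyset$, and $ax,vy\notin E(G)$; then delete the edges $av$ and $xy$ and insert the edges $ax$ and $vy$. The resulting graph lies in $\cG_{\cA}(\dv)$ and does not contain the edge $av$. The operation is clearly invertible: given $G'\not\ni av$, the reverse switching picks an ordered pair $(x,y)$ with $ax,vy\in E(G')$ and $xy\notin E(G')$, and swaps the two pairs of edges back. Hence the sum of forward switching counts over all $G\ni av$ equals the sum of reverse switching counts over all $G'\not\ni av$.

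To lower bound the forward count from a fixed $G\ni av$, I would start from the $dn=2m$ ordered edges of $G$ and subtract the invalid ones: those incident with $\{a,v\}$ (contributing at most $2(d_a+d_v)$ ordered pairs) and those that fall outside $\{a,v\}$ but violate $ax\notin E$ or $vy\notin E$ (contributing at most $(d_a-1)\Delta+(d_v-1)\Delta$). Using $d_a,d_v\le\Delta$ and a careful inclusion–exclusion that avoids double-counting the edge $av$ itself should yield a lower bound of $dn-\Delta(\Delta+2)$. For any fixed $G'\not\ni av$, the reverse count is at most $d_a d_v\le \Delta^2$, since $x$ must be a neighbour of $a$ and $y$ of $v$ in $G'$. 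Equating the two totals gives
$$
\Num_{av}(\dv)\,\bigl(dn-\Delta(\Delta+2)\bigr)\;\le\;\Num^*_{av}(\dv)\cdot \Delta^2,
$$
and the claimed bound on $P_{av}(\dv)$ follows upon dividing by $\Num(\dv)$ and using $\Num^*_{av}(\dv)\le\Num(\dv)$.

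The only step demanding any care is the forward lower bound: the inclusion–exclusion must be organised so that the quadratic-in-$\Delta$ correction has coefficient $1$ rather than $2$ in front of $\Delta^2$. This is purely bookkeeping; no other subtleties arise. Implicit in the bound is that it is meaningful only when $\Delta(\Delta+2)<dn$, which is exactly the regime in which the denominator $1-\Delta(\Delta+2)/(dn)$ is positive.
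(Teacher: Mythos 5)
Your overall switching set-up matches the paper's, but the step you flag as ``purely bookkeeping'' is exactly where the argument breaks: the forward count cannot be bounded below by $dn-\Delta(\Delta+2)$. The invalid ordered pairs $(x,y)$ are those with $x\in\{a\}\cup N(a)$ or $y\in\{v\}\cup N(v)$, and no inclusion--exclusion can make this union smaller than it really is. Take $G$ to be (part of) a $\Delta$-regular graph of girth at least $5$ containing the edge $av$: then the first event accounts for exactly $\Delta(\Delta+1)$ ordered pairs, the second for another $\Delta(\Delta+1)$, and their overlap consists only of the two ordered versions of the edge $av$ itself, so the number of invalid pairs is $2\Delta^2+2\Delta-2$, which exceeds $\Delta(\Delta+2)$ for every $\Delta\ge 2$. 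So the best such a count gives is the weaker bound $dn-2\Delta(\Delta+1)$ (this is what your own itemised subtraction, $2(d_a+d_v)+(d_a-1)\Delta+(d_v-1)\Delta\le 2\Delta(\Delta+1)$, already says); the saving from not double-counting $av$ is $O(\Delta)$, not the $\Delta^2$ you would need.

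The paper recovers the missing factor elsewhere: it keeps the forward bound $dn-2\Delta(\Delta+1)$, deduces $\Num_{av}(\dv)/\Num^*_{av}(\dv)\le\beta$ with $\beta=\Delta^2/\bigl(dn-2\Delta(\Delta+1)\bigr)$, and then, instead of the crude inequality $\Num^*_{av}(\dv)\le\Num(\dv)$, uses $\Num(\dv)=\Num_{av}(\dv)+\Num^*_{av}(\dv)$ to get
$$
P_{av}(\dv)\;=\;\frac{\Num_{av}(\dv)}{\Num_{av}(\dv)+\Num^*_{av}(\dv)}\;\le\;\frac{\beta}{1+\beta}\;=\;\frac{\Delta^2}{dn-2\Delta(\Delta+1)+\Delta^2}\;=\;\frac{\Delta^2}{dn-\Delta(\Delta+2)},
$$
which is precisely the stated bound. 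By replacing $\Num^*_{av}(\dv)$ with $\Num(\dv)$ you threw away exactly this $+\Delta^2$ in the denominator, and that is what forced you to claim an unattainable forward count. If you repair your argument by keeping the comparison between $\Num_{av}$ and $\Num^*_{av}$ and finishing via $\beta/(1+\beta)$, the rest of your proposal goes through.
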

\begin{proof} 
For each graph $G$ with degree sequence $\dv$ and an edge joining $a$ and $v$, we can perform a switching (of the type used previously in graphical enumeration) by removing both $av$ and  another randomly chosen edge $a'v'$, and inserting the edges  $aa'$ and $vv'$, provided that no loops or multiple edges are so formed. The number of such switchings that can be applied to $G$ with the vertices of each edge ordered, is at least 
$$
dn-2(\Delta+1)\Delta 
$$ 
since there are   $dn$  ways to choose $a'$ and $v'$ as the ordered ends of any edge,  whereas the number of such choices that are ineligible is at most the number of choices with $a'$ being $a$ or a neighbour of $a$ (which automatically rules out  $a'=v$), or similarly for $v'$. On the other hand, for each graph $G'$ in which  $av$ is {\em not} an edge, the number of ways that it is created by performing such a switching backwards is at most $\Delta^2$. Counting the set of all possible switchings over all such graphs $G$ and $G'$ two different ways shows that the ratio of the number of graphs with  $av$  to the number without  $av$ is at most
$$
\beta:=\frac{\Delta^2}{dn  -2\Delta(\Delta+1) }.
$$
Hence $P_{av}(\dv)\le \beta/(1+\beta)$, and the lemma follows in both cases. 
\end{proof}


We finish the section with  some simple sufficient conditions for a sequence to be graphical.  We need this since our degree switching arguments require that $\Num(\dv')>0$ for several sequences $\dv'$ that are 
related to a {\em root} sequence $\dv$. 
 \begin{lemma}\thlab{l:graphical}
For $\eps>0$, the following holds for $n$ sufficiently large. 
Let  $ d_i\ge 0$   be integers for all $1\le i \le  n$, with  $ \sum_{1\le i\le n } d_i$ even.    Then  there exists a   graph with degrees $d_1,\ldots, d_n$  provided that either of the following hold. 
\begin{enumerate}[(a)]
\item \label{l:graphicali}  
There exists $0<\mu<1-  \eps$ with  $\max\size{\mu - d_i/n } <\eps\mu$.
\item \label{l:graphicalii} We have 
$1\le d_i\le 2\sqrt n - 2$ for   $1\le i \le n$.
\end{enumerate}
 \end{lemma}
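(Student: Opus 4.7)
The plan is to appeal to the Erd\H{o}s--Gallai theorem: after reordering so that $d_1\ge d_2\ge\cdots\ge d_n$, the sequence is graphical (given the parity assumption) if and only if, for every $k\in[n]$,
\[
\sum_{i=1}^k d_i \;\le\; k(k-1) + \sum_{i=k+1}^n \min(d_i,k).
\]
In each part I will verify this inequality by bounding the prefix sum above and the truncated suffix sum below using only coarse information about the degree sequence.

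For part (b), the lower bound $d_i\ge 1$ together with $k\ge 1$ gives $\min(d_i,k)\ge 1$, so the right-hand side is at least $k(k-1)+(n-k)$, while the left-hand side is at most $k\Delta$ with $\Delta\le 2\sqrt n-2$. Hence it suffices to check $k^2-(\Delta+2)k+n\ge 0$, a quadratic in $k$ whose discriminant $(\Delta+2)^2-4n$ is at most $0$; the quadratic is therefore nonnegative for every real value of $k$, in particular for every integer $k\in[n]$.

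For part (a), near-regularity gives $(1-\eps)\mu n\le d_i\le (1+\eps)\mu n$, and I split on the position of $k$ relative to $\mu n$. When $k\le (1-\eps)\mu n$ one has $d_i\ge k$ for $i>k$, so $\min(d_i,k)=k$ and the right-hand side equals $k(n-1)$; the required bound $k(1+\eps)\mu n\le k(n-1)$ then follows from $(1+\eps)\mu<1-\eps^2<1-1/n$ for $n$ large. When $k>(1-\eps)\mu n$ I use the uniform bound $\min(d_i,k)\ge (1-\eps)\mu n$, which reduces the Erd\H{o}s--Gallai inequality to
\[
k^2-(1+2\mu n)k+\mu n^2(1-\eps)\ge 0;
\]
the minimum of the left-hand side, attained at $k=\mu n+\tfrac12$, equals $\mu n^2(1-\eps-\mu)-\mu n-\tfrac14$, which is positive because the strict gap $1-\eps-\mu>0$ forces the dominant term $\mu n^2(1-\eps-\mu)$ to overwhelm the correction $\mu n+\tfrac14$ once $n$ is sufficiently large.

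The main subtlety is that $\mu$ is allowed to depend on $n$, so the slack $1-\eps-\mu$ and the scale $\mu n$ could in principle shrink. In the regime $\mu n=O(1)$ the near-regularity forces the $d_i$ to take only the values $\lfloor\mu n\rfloor$ and $\lceil\mu n\rceil$, a bounded nearly-regular sequence that either falls under part (b) or admits a direct construction; in the regime where $\mu$ approaches $1-\eps$ the quadratic argument still works provided $(1-\eps-\mu)\mu n^2$ grows faster than $\mu n$, which holds uniformly once $n$ is large enough relative to $\eps$. These edge considerations are the only place where the hypothesis ``for $n$ sufficiently large'' is genuinely invoked.
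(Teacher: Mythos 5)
Your overall route is the same as the paper's: the paper verifies Koren's equivalent form of the Erd\H{o}s--Gallai conditions and, after taking the extreme choices of the set $T$, arrives at exactly the quadratics you write down (your part (b) discriminant argument and the paper's bound $s(\Delta+2-s)\le n$ are the same computation, and your part (a) quadratic $k^2-(1+2\mu n)k+(1-\eps)\mu n^2$ is literally the paper's expression at $t=n-s$). Part (b) of your argument is complete and correct.

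The genuine gap is the final uniformity claim in part (a). You correctly identify that the minimum of the quadratic is $\mu n^2(1-\eps-\mu)-\mu n-\tfrac14$, but the assertion that ``$(1-\eps-\mu)\mu n^2$ grows faster than $\mu n$ \ldots uniformly once $n$ is large'' is false: if $1-\eps-\mu\le 1/n$ then $(1-\eps-\mu)\mu n^2\le\mu n$ and the minimum is negative, so your sufficient inequality fails for $k$ near $\mu n$, and integrality of $k$ only recovers an additive $\tfrac14$. This is not merely a lossy estimate that a sharper argument could repair: in that corner the statement itself breaks down. For instance, with $\eps=\tfrac1{10}$ and $n=150$, take $136$ vertices of degree $148$ and $14$ vertices of degree $124$ (even sum); the choice $\mu=0.899<1-\eps$ satisfies $\max_i\size{\mu-d_i/n}<\eps\mu$, yet each of the $136$ high-degree vertices can have at most $135$ neighbours outside the $14$ low-degree vertices, so at least $136\cdot(148-135)=1768$ edge-ends must land on the low-degree vertices, whose total degree is only $14\cdot124=1736$; hence no such graph exists. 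The construction scales to arbitrarily large $n$ (e.g.\ $n\equiv 50 \pmod{100}$), so the gap cannot be closed without keeping $\mu$ bounded away from $1-\eps$ (or strengthening ``$n$ large'' to depend on the gap $1-\eps-\mu$). In fairness, the paper's own proof elides exactly the same corner --- its claim that the expression at $t=n-s$ is ``easily seen to be non-negative'' is the same quadratic --- and in all applications in the paper $\mu=o(1)$, where both your argument and the paper's are sound; but as written, your concluding uniformity step is a concrete false claim and is where the proof fails.
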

\begin{proof}
Koren~\cite[Section 1]{K} observed that the classical Erd{\H o}s-Gallai conditions for the existence of a graph with degrees $d_1,\ldots, d_n$ are equivalent to the following: $\sum d_i$ is even, and whenever $S\cap T= \emptyset \ne S\cup T \subseteq [n]$ we have
$$
\sum_{i\in S} d_i-\sum_{j\in T}d_j\le s(n-1-t)
$$
where $s=|S|$ and $t=|T|$. 

For~\eqref{l:graphicali}, it suffices to have $ s(n-1-t)-s\mu n(1+\eps)+t\mu n(1-\eps)\ge 0 $. We only need to check the extreme values of $t$, i.e.~$t=0$ and $t=n-s$. In both cases, the function is easily seen to be non-negative for all appropriate $s$ and $n$ sufficiently large, noting that $\mu\le 1-\eps$. 

For~\eqref{l:graphicalii},  we only need $s\Delta-t\le s(n-1-t)$ (recall  that $1\le d_i \le \Delta$). Again taking the extreme values of $t$, only the larger one gives a restriction, being $s(\Delta+2-s)\le n$, which is satisfied because $\Delta\le 2\sqrt n -2$.
\end{proof}


\section{Recursive relations}\lab{s:recursive}
In this section, we derive certain recursive formulae 
for the probability and ratio functions $P_{av}(\dv)$ 
and  $R_{ab}(\dv)$. These identities will serve as a motivation for operators that we define in the 
next section. With a view to further applications of this  work elsewhere, we consider  an arbitrary set $\cA$ of allowable pairs.

Our first result expresses the edge probability $P_{av}$, the ratio $R_{ab}$, and the path probability $Y_{ivj}$  in terms of each other.

\begin{proposition}\thlab{l:recurse}
Let $\dv$ be a sequence of length  $n$ and let $\cA\se \binom{[n]}{2}$.
\begin{itemize}
\item[$(a)$] Let $a,v\in V$. If $\Num_{av}(\dv)>0$ 
then 
	\begin{equation*}
	P_{av}(\dv) =d_{v} \Bigg(\sum_{b\in \cA^*(v)}
	 R_{ba} ( \dv- \ev)
	\frac{1-P_{bv}(\dv - \eb - \ev)}
	{1-P_{av}(\dv - \ea - \ev)}\Bigg)^{-1}.
	\end{equation*}
\item[$(b)$] Let $a,b\in V$. If $\dv-\eb$ is $\cA$-realisable then 
	\begin{align} \lab{Ratformula}
	 R_{ab} ( \dv) &= \frac{d_{a}}{d_{b}}\cdot \frac{ 1-\Bad(a,b, {\bf d} - \eb)}{ 1-\Bad(b,a, {\bf d} - \ea)}, 
	\end{align}
	where 
	\begin{align}\lab{eq:bad}
	\Bad(i,j, \dw) 
	& = 
	 \frac{1}{d_{i}}\Bigg(\sum_{ v \in \cA(i)\setminus \cA(j)}  P_{iv}(\dw)   +
	 \sum_{ v \in \cA(i)\cap \cA(j) }  Y_{ivj}(\dw) \Bigg), 
	\end{align}
	provided that  $\Bad(b,a, {\bf d} - \ea) \ne 1$.
 
\item[$(c)$] Let $a,v,b$ be distinct elements of $V$. If  $\dv-\ea-\ev$ is $\cA$-realisable then 
\begin{equation*}
	\YPavb_{avb}(\dv) = 
	\frac{P_{av}(\dv)\big( P_{bv}(\dv - \ea - \ev)- \YPavb_{avb}(\dv - \ea - \ev)\big)}
	{1-P_{av}(\dv - \ea - \ev)} .
	\end{equation*}
\end{itemize}

\end{proposition}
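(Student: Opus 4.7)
The plan is to prove both recursions by elementary counting: part $(a)$ will follow from a handshake-style identity combined with the deletion trick (\thref{trick17}), and part $(b)$ from a simple edge-switching bijection.

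For $(a)$, I would begin with the identity $d_v \Num(\dv)=\sum_{b\in \cA(v)} \Num_{bv}(\dv)$, obtained by counting incidences at $v$ across all graphs in $\cG_\cA(\dv)$. Applying \thref{trick17} to each summand gives $\Num_{bv}(\dv)=\Num(\dv-\eb-\ev)(1-P_{bv}(\dv-\eb-\ev))$, and the vanishing terms are precisely those with $b\notin \cA^*(v)$, so the sum may be restricted to $\cA^*(v)$. A second application of the same lemma to the assumed-positive $\Num_{av}(\dv)$ supplies the numerator. Dividing numerator and denominator of $P_{av}(\dv)=\Num_{av}(\dv)/\Num(\dv)$ by the common factor $\Num(\dv-\ea-\ev)$ converts the remaining counts into ratios $\Num(\dv-\eb-\ev)/\Num(\dv-\ea-\ev)=R_{ba}(\dv-\ev)$, and a short algebraic rearrangement delivers the identity in $(a)$.

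For $(b)$, I would compare the sets
\begin{align*}
\cS_1 &= \{(G,v) : G\in \cG_\cA(\dv-\eb),\ v\in \cA(a)\cap \cA(b),\ av\in E(G),\ bv\notin E(G)\}, \\
\cS_2 &= \{(G',v) : G'\in \cG_\cA(\dv-\ea),\ v\in \cA(a)\cap \cA(b),\ bv\in E(G'),\ av\notin E(G')\},
\end{align*}
which are placed in bijection by the operation of swapping the edge $av$ for $bv$; a short degree check shows $(\dv-\eb)-\ea+\eb=\dv-\ea$, confirming that the image indeed lies in $\cS_2$. To express $|\cS_1|$ in terms of $\bad$, I would use the expected-degree identity $\sum_{v\in \cA(a)} P_{av}(\dv-\eb)=d_a$ together with the fact that $P_{av}(\dv-\eb)-P_{avb}(\dv-\eb)$ equals the probability that $v\sim a$ but $v\not\sim b$ in a uniform $G\in \cG_\cA(\dv-\eb)$; these together yield $|\cS_1|=d_a(1-\bad(a,b,\dv-\eb))\,\Num(\dv-\eb)$. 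The analogous computation gives $|\cS_2|=d_b(1-\bad(b,a,\dv-\ea))\,\Num(\dv-\ea)$, and equating the two cardinalities and solving for $R_{ab}(\dv)=\Num(\dv-\ea)/\Num(\dv-\eb)$ produces the formula in $(b)$.

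The core arguments are short, so the main concern is bookkeeping around degenerate cases where some count vanishes. The hypothesis $\Num_{av}(\dv)>0$ in $(a)$, together with the restriction of the sum to $\cA^*(v)$, ensures every denominator is nonzero, and the proviso $\bad(b,a,\dv-\ea)\neq 1$ in $(b)$ keeps the final division well-defined; the remaining edge case $\Num(\dv-\ea)=0$ is consistent with the bijection, since then both $\cS_1$ and $\cS_2$ are empty and $R_{ab}(\dv)=0$ on both sides.
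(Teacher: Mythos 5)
Your proposal is correct and follows essentially the same route as the paper: part (a) is the identical handshake-plus-deletion (\thref{trick17}) computation, and part (b) is the same degree-switching argument, merely phrased as an explicit bijection between pairs $(G,v)$ rather than as a two-way count of valid switchings with $\Bad$ interpreted as the probability of a bad event. The bookkeeping you flag (nonvanishing denominators via $\cA^*(v)$ and $\Num_{av}(\dv)>0$, the proviso $\Bad(b,a,\dv-\ea)\neq 1$) matches how the paper handles the degenerate cases.
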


\noindent {\bf Remarks\ }
\begin{enumerate}
\item In (a), the summation over all $b$ in $\cA^*(v)$, instead of $\cA(v)$, is merely for the technicality that 
$P_{bv}(\dv-\eb-\ev)$ is otherwise undefined.

\item The condition $\Num_{av}>0$ in part (a) does not reduce the practical usefulness of the lemma since the degree sequences where this condition fails for allowable edges $av$ with $d_a,d_v>0$ are pathological enough that our method fails on those for  other reasons.

\item For  the range of the summations in $B(i,j,\dv')$, when $\cA=\binom{[n]}{2}$ (as in the applications in this paper) we have
	$\cA(i)\sm \cA(j)=\{j\}$ and $\cA(i)\cap \cA(j)=[n]\sm\{ i,j\}$. 
\end{enumerate}

\begin{proof}
To prove part (a) of the lemma, let $\dv$ be an $\cA$-realisable  sequence. 
Then every graph $G\in \cG_\cA(\dv)$ contributes exactly $d_v$ to 
$ \sum_{ b\in \cA^*(v) } \Num_{bv}(\dv).$
Hence, since $\Num_{av}(\dv)>0$ we may write  
\begin{align*}
d_v &= \sum_{\substack{b\in \cA^*(v) }} \frac{\Num_{bv}(\dv)}{\Num(\dv)}
	= P_{av}(\dv) \sum_{\substack{b\in \cA^*(v) }}  \frac{\Num_{bv}(\dv)}{\Num_{av}(\dv)}\\
	&= P_{av}(\dv) \sum_{\substack{b\in \cA^*(v) }} 
		\frac{\Num({\bf d} -\eb - \ev) (1- P_{bv}({\bf d} - \eb-\ev ))}
		{\Num({\bf d} - \ea-\ev) (1- P_{av}({\bf d}- \ea-\ev))},
\end{align*}
by Observation~\ref{trick17}, noting that 
$\Num({\bf d} -\eb - \ev)\ge\Num_{bv}(\dv)>0$ by defnition of $\cA^*(v)$. 
 Part (a) follows since by definition   
$$
\frac {\Num({\bf d} - \eb-\ev )}{\Num({\bf d} - \ea-\ev)}
=  R_{ba} ( \dv - \ev),
$$
and also noting that the summation is non-zero because $a\in \cA^*(v)$.

To prove part (b) of the lemma, assume that $\dv-\eb$ is $\cA$-realisable. First note that if $a=b$ then by definition $R_{ab} ( \dv)=1$ and so the formula is correct. We can therefore assume henceforth that $a\ne b$.
Let $J_1$ be the set of graphs in $\cG_\cA(\dv-\eb)$ 
with a distinguished edge incident to vertex $a$, and $J_2$ the set of graphs in $ \cG_\cA (\dv-\ea)$ with a distinguished edge incident to $b$. Then $|J_1|= d_a  {\Num(\dv-\eb)}$ and $|J_2|= d_b  {\Num(\dv-\ea)}$.  
Applying a {\em degree switching} to  $G\in J_1$  consists of deleting the distinguished edge $av$ and adding a new distinguished edge $bv$, to produce  a graph $G'\in J_2$. The degree switching cannot be performed, i.e.\ is not valid, if  $bv\notin \cA$, which includes the case that $v=b$,  or  $bv$ is   an edge of $G$.  
Now let $G\in \cG_\cA(\dv-\eb)$  be picked  uniformly at random 
and let $v$ be a random neighbour of $a$ in $G$.  Let $E$ be the event $E_\cA \cup E_D$ where $E_\cA$ is the event that $bv\not\in \cA$ and $E_D$ is the event that $bv$ is an edge of $G$, and define  $ \Bad(a,b, \dv-\eb) =\pr(E)$ (which we show further below to satisfy~\eqref{eq:bad}). 
Then the number of valid degree switchings  
is 
$$ d_a{\Num(\dv-{\bf e_b})} (1-\Bad(a, b, \dv-\eb)).$$ 
We may count  the same switchings from the other direction, i.e.\ starting with an element of $J_2$, using the same argument, and in this case $\Num ({\bf d} - {\ea})=0$ is permissible.  Equating the two counts gives
$$
\frac{\Num ({\bf d} - {\ea})}{\Num({\bf d}-{\bf \eb}) } = \frac{d_a}{d_b}\cdot \frac{ 1-\Bad(a, b, {\bf d}-{\eb})}{ 1-\Bad(b, a, {\bf d} -\ea)},
$$
where the denominator is non-zero by the hypotheses of (b), noting that $d_b>0$ because $\dv-\eb$ is $\cA$-realisable. This gives~\eqn{Ratformula}.

The event $E_D$ can only happen if the vertex $v$ is a neighbour both of $a$ 
and of $b$, and hence (as we still assume $a\ne b$)
$$
\pr(E_D) =\frac{1}{d_a}\sum_{v\in \cA(a)\cap \cA(b)} \YPavb_{avb}(\dv-\eb).
$$
On the other hand, the vertex $v$ is always a neighbour of $a$ in a graph $G\in \cG_\cA(\dv)$, and thus 
$$
\Pr(E_\cA) = \Pr(\{ v\notin \cA(b)\}) = \Pr(\{ v\in \cA(a)\sm \cA(b)\})=\frac{1}{d_a} \sum_{ v \in \cA(a)\setminus \cA(b)  } P_{av}(\dv-\eb) .
$$
Noting that   
$ E_\cA\cap E_D=\emptyset$, so   $\Pr(E)=\Pr(E_\cA)+\Pr(E_D)$, we obtain  the stated formula for $\Bad(a,b,\dv-\eb)$. For  $\Bad(b,a,\dv-\ea)$ we can just swap $a$ and $b$.

For (c),   
the assumptions imply that $\Num_{av}(\dv) > 0$, and hence by definition, 
\begin{align}\lab{eq:aux792}
\YPavb_{avb}(\dv) &= \frac{\Num_{\{av, bv\}}(\dv)}{\Num(\dv)}
	= P_{av}(\dv) \cdot \frac{\Num_{\{av, bv\}}(\dv)}{\Num_{av}(\dv)}. 
\end{align}
 Now \thref{trick17} says that
\begin{align*}
\Num_{ av }(\dv)  
&=\Num (\dv - \ea-\ev) -  \Num_{ av }(\dv -  \ea - \ev)\\
 & =   \Num (\dv - \ea-\ev)\big(1-  P_{av}(\dv -  \ea - \ev)\big)
\end{align*}	
and similarly (c.f.\ the proof of \thref{trick17}) we have
\begin{align*}
\Num_{\{av, bv\}}(\dv)
	&=\Num_{bv}(\dv - \ea-\ev) -  \Num_{\{av, bv\}}(\dv -  \ea - \ev) \\ 
	&= \Num (\dv - \ea-\ev) \big(  P_{bv}(\dv - \ea - \ev)- P_{abv}(\dv - \ea - \ev)  \big).
\end{align*} 
Part (c)   follows upon substituting these expressions into~\eqn{eq:aux792}.
\end{proof}


\section{The general method and the sparse case  for graphs}
\lab{s:sparseGraphs}

In this section we  present a simple application of the recursive relations found in Section~\ref{s:recursive}. This is  a completely new derivation of a known formula for the number of sparse graphs with a given  degree sequence. We first give a template of the method, since it is used again in the other proofs in this paper and can be used elsewhere.

\bigskip
\noindent {\bf Template of the method}
\smallskip

Step~1.~Obtain an estimate of the ratio  between the numbers of graphs of related degree sequences, using Proposition~\ref{l:recurse}. This step is the crux of the whole argument. 

Step 2. By making suitable definitions, we cause this ratio to appear as the expression \linebreak $\pr_{\cS'}(u)/ \pr_{\cS'}(v)$ for some probability space $\cS'$ on an underlying set $\Omega$ in an application of  Lemma~\ref{l:lemmaX}. Thus, $\Omega$ is the set of degree sequences, with probabilities in $\cS'$ determined by  the random graph under consideration, and the graph $G$ in the lemma has a suitable vertex set $\W$ of such sequences. Each edge of $G$ is in general a  pair of degree sequences   $\dv-\ea$ and $\dv-\eb$ of the form occurring in the definition of $R_{ab}( \dv) $. Having defined $G$, we may call any two such degree sequences {\em adjacent}.  

Step 3. Another probability space $\cS$ is defined on $\Omega$, by taking a probability space $\cB $ directly from a joint binomial distribution,   together with a function $\widetilde H(\dv)$ that varies quite slowly, and defining probabilities in $\cS$ by the equation
$ \pr_\cS(\dv)=    \pr_{\cB }(\dv) \widetilde H(\dv) /\ex_{\cB} \widetilde H$.  

Step 4.
Using sharp concentration results, show that  $P(\W) \approx 1$ in both of the  probability spaces $\cS$ and $\cS'$ (where, by $\approx$, we mean approximately equal to, with some specific error bound in each case). As part of this, we show that $\ex_{\cB } \widetilde H \approx 1 $. At this point, we may specify $\eps_0$ for the application of Lemma~\ref{l:lemmaX}.
 
Step 5.  Apply Lemma~\ref{l:lemmaX} and the conclusions of the previous steps to deduce   $P_{\cS'}(\dv) \approx P_\cS(\dv)\approx \pr_{\cB }(\dv) \widetilde H(\dv)$. Upon estimating the errors in the approximations, which includes bounding the diameter of the graph $G$, we obtain an estimate for the probability $P_{\cS'}(\dv)$ of the random graph having degree sequence $\dv$ in terms of a known quantity.
\medskip

Recall that  given a sequence $\dv$ we write $\Delta(\dv)=\max_i d_i$,  $M_1(\dv)=\sum_i d_i$, $d=d(\dv)=M_1/n$, and $\mu=\mu(\dv)=d/(n-1)$. Note that in the following result the condition $\Delta(\dv)^6+n^{ \eps} = o(nd^2)$ implies in particular that $m/\sqrt{n} \to\infty$. This restriction is imposed just for simplicity; the technique can still apply in the (less interesting) extremely sparse case. Also recall the probability spaces of random sequences $\G(n,m)$ and $\cB_m(n)$ from Section~\ref{s:intro}.

 \begin{thm}\thlab{t:sparseCase}
Let $\eps>0$,  let $n$  and $m$ be integers.  
Let  $\D$ be  any set of sequences of length $n$ with
 $\sum_i d_i=2m $ for all $\dv\in\D$ such that 
 $\Delta(\dv)^6+n^{ \eps} = o(nd^2)$ uniformly for all $\dv\in\D$. 
Then uniformly for $\dv^*\in\D$ we have
$$ 
\pr_{\G(n,m)}(\dv^* )= \pr_{\cB_m}(\dv^*)\exp\left( \frac{1}{4}-\frac{\gtwo^2}{4\mu^2(1-\mu)^2}\right)\bigg(1+O\bigg( \frac{ \Delta(\dv^*)^6+n^{ \eps}}{nd^2 } +n^{\eps-1/2}  \bigg)  \bigg). 
$$
\end{thm}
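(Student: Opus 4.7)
Following the template at the start of this section, the plan is as follows. The ideal space is the binomial model $\cB_m(n)$, weighted by the slowly-varying factor $\widetilde H(\dv)=\exp\bigl(1/4-\gtwo^2/(4\mu^2(1-\mu)^2)\bigr)$, and the target per-edge ratio between neighbouring degree sequences is $H(\dv-\ea)/H(\dv-\eb)$ as already computed in~\eqref{conjRatio}. The analytical heart of the argument is the matching of this target against the genuine ratio $R_{ab}(\dv)$ to sufficient accuracy. I invoke \thref{l:simpleSwitching} first, giving the cheap bound $P_{av}(\dv')=O(\Delta^2/M_1)$ for every nearby $\dv'$; this verifies the hypotheses of \thref{2Path} with $k_0=2$, which in turn simplifies $P_{avb}(\dv')$ to essentially $d_ad_bd_v(d_v-1)/M_1^2$. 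Substituting into~\eqref{eq:bad} and simplifying the sum $\sum_v d_v(d_v-1)/M_1$ in terms of $\gtwo$ and $\mu$ yields an expansion of the form $B(a,b,\dv-\eb)=d_b/(n-1)+d_b\gtwo/(d(1-\mu))+O(\Delta^4/(nd^2))$, and similarly for $B(b,a,\dv-\ea)$. Expanding the factor $(1-B(a,b,\dv-\eb))/(1-B(b,a,\dv-\ea))$ in~\eqref{Ratformula} then reproduces both the combinatorial factor $d_a(n-d_b)/(d_b(n-d_a))$ and the exponential factor $\exp((d_a-d_b)\gtwo/(d^2(1-\mu)^2))$ of~\eqref{conjRatio} with a small per-edge error~$\delta$.

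For the transfer step, take $\Omega$ to be the nonnegative integer sequences of length $n$ with sum $2m$; set $\pr_{\cS'}(\dv)=\pr_{\cD(\G(n,m))}(\dv)$ and $\pr_\cS(\dv)=\pr_{\cB_m}(\dv)\widetilde H(\dv)/\ex_{\cB_m}\widetilde H$. Join $\dv^{(1)}$ and $\dv^{(2)}=\dv^{(1)}-\ea+\eb$ by an edge of $G$: writing $\dv:=\dv^{(1)}+\eb$, the edgewise hypothesis of \thref{l:lemmaX} reduces exactly to the statement established in the previous paragraph. Let $\W$ be the sequences in $\Omega$ with $\max_i|d_i-d|\le\Delta_0:=C\sqrt{d\log n}$ for a sufficiently large constant $C$; Chernoff bounds for $\cB_p$ together with a local-limit argument to condition on $\Sigma=2m$, combined with the classical concentration of degrees in $\G(n,m)$, give $\pr_\cS(\W),\pr_{\cS'}(\W)=1-n^{-\omega}$ as well as $\ex_{\cB_m}\widetilde H=1+o(1)$. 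Since any two sequences in $\W$ have $L^1$-distance at most $2n\Delta_0$ and each edge of $G$ changes $L^1$-distance by~$2$, the diameter of $G$ restricted to $\W$ is $r=O(n\Delta_0)$. Applying \thref{l:lemmaX} now gives $\pr_{\cS'}(\dv^*)=\pr_\cS(\dv^*)\exp(O(r\delta+n^{-\omega}))$, and inserting the values of $r$ and $\delta$ together with $\ex_{\cB_m}\widetilde H=1+o(1)$ produces the stated formula with error $O(\Delta(\dv^*)^6/(nd^2)+n^{\eps-1/2})$.

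The main obstacle is driving the per-edge error in the first paragraph down to an order that survives multiplication by the large diameter $r$. The combinatorial factor in~\eqref{conjRatio} appears in~\eqref{Ratformula} almost immediately, but the exponential correction involving $\gtwo$ is only revealed through the careful expansion of $(1-B(a,b,\dv-\eb))/(1-B(b,a,\dv-\ea))$: the moment $\sum_v d_v(d_v-1)/M_1$ must be identified and promoted from what naively looks like an error term into a genuine structural contribution whose $(a\leftrightarrow b)$-antisymmetric part matches the exponential factor on the target side. Keeping the unmatched residual at $O(\Delta^5/(n^2d^3))$ per edge, rather than a larger power of $\Delta$, is precisely what forces the hypothesis $\Delta^6+n^\eps=o(nd^2)$ appearing in the theorem.
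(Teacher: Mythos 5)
Your overall architecture is the same as the paper's (a ratio estimate for adjacent sequences, then transfer via \thref{l:lemmaX} with a concentration argument), but the analytical core is not actually derived as you describe, and this is a genuine gap. \thref{l:simpleSwitching} gives only an \emph{upper bound} $P_{av}(\dv')=O(\Delta^2/dn)$, and feeding an upper bound into \thref{2Path} can only ever return an upper bound $P_{avb}=O(\Delta^4/(dn)^2)$; it cannot produce the asymptotic formula $d_a[d_v]_2d_b/M_1^2$ that you then substitute into~\eqn{eq:bad}. To get asymptotics one must first run a full crude round of the recursion: the crude bounds give $\Bad=O(\Delta^4/(d^2n))$, hence $R_{ab}=(d_a/d_b)(1+O(\Delta^4/(d^2n)))$ from~\eqn{Ratformula}, and only then does \thref{l:recurse}(a) yield $P_{av}=d_ad_v/M_1(1+O(\cdot))$, which is what makes the second pass through \thref{2Path} and~\eqn{eq:bad} reveal the second-moment term. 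This two-stage bootstrap, together with the realisability checks $\Num(\dv')>0$ for all nearby sequences (via \thref{l:graphical}, applied to the nonzero coordinates since degrees may vanish), without which none of $P$, $R$, $\Bad$ is even well defined, is the heart of the proof and is missing from your first paragraph. A smaller but real slip in the same place: your expansion of $\Bad(a,b,\dv-\eb)$ has the variance term at order $d_b\gtwo/d$, whereas it is of order $d_b\gtwo/d^2$ (equivalently $d_b\sigma^2/(d^2n)$); with your version the antisymmetric part would not match the exponent in~\eqn{conjRatio}, contradicting your own conclusion.

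The transfer step also has a flaw in the choice of $\W$. You take $\W=\{\max_i|d_i-d|\le C\sqrt{d\log n}\}$, but the hypothesis only requires $\Delta(\dv^*)^6+n^\eps=o(nd^2)$, so $\Delta(\dv^*)$ may be as large as roughly $(nd^2)^{1/6}$, which for small $d$ vastly exceeds $C\sqrt{d\log n}$; then $\dv^*\notin\W$ and \thref{l:lemmaX} yields no conclusion about $\pr_{\cS'}(\dv^*)$ at all. The set must be keyed to the target sequence, e.g.\ all sequences with sum $2m$ and maximum degree at most $\Delta_1=2\Delta(\dv^*)+n^{\eps/6}$, which simultaneously contains $\dv^*$ and still has probability $1-n^{-\omega}$ in both models, and then the per-edge error must be computed with $\Delta_1$ rather than with $C\sqrt{d\log n}$. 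Your final bookkeeping should also be tightened: with per-edge error $O(\Delta_1^6/(d^3n^2))$ and diameter $O(nd)$ (paths inside $\W$, not just an $L^1$ bound on $\W$), the accumulated error is $O(\Delta_1^6/(d^2n))$, which is exactly the stated bound; your claimed residual $O(\Delta^5/(n^2d^3))$ does not ``force'' the hypothesis under either diameter. Finally, to account for the $n^{\eps-1/2}$ term you need the quantitative statement $\ex_{\cB_m}\widetilde H=1+O(n^{\eps-1/2})$ (concentration of $\sigma^2$ as in \thref{l:sigmaConc}), not merely $1+o(1)$.
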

\begin{proof}
We can clearly assume that $\D$ is nonempty and we fix  $\dv^*\in\D$. We first  estimate ratios of probabilities for adjacent sequences that are typical in the binomial model $\cB_m$, and sequences close to  $\dv^*\in\D$ (as per Step 1 in the template above). We make the following definitions. Let
$$\Delta_1= 2 \Delta(\dv^*)   + n^{\eps/6}$$
and define  $\D^+$ 
to be the set of all sequences $\dv\in \Z^n$ 
with $\Delta(\dv)\le    \Delta_1$ and $M_1(\dv)=2m$. 
For an integer $r\ge 0$, denote by $Q^0_r$ (or $Q^1_r$) the set of all even (or odd, respectively) sequences in $\Z^n$ that  have $L_1$
distance at most $r$ from  some sequence in $\D^+$. (Recall that we defined the parity of $\dv$ to be the parity of $M_1(\dv)$.)
We will estimate the ratio of the probabilities of adjacent degree sequences in the random graph model using the following. Define $R_{ab} $ as in Section~\ref{s:prelim} with $\cA = {V\choose 2}$. 

\begin{claim}\thlab{RSparse}
Uniformly for  all sequences $\dv\in  Q^1_1$ and for all $a,b\in [n]$
\begin{equation}
R_{ab}(\dv)=
\frac{ d_a}{ d_b} \left(1 +\frac{( d_a- d_b)(M_1+M_2)}{M_1^2}  \right) \left(1+O\left( \frac{\Delta_1^6}{d^3n^2}\right)\right), 
\ee
where 
$M_1=M_1(\dv)$ and 
$M_2 =M_2(\dv) = \sum_{v\in[n]} d_v( d_v-1) $. 
\end{claim}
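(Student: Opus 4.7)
The starting point is Proposition~\ref{l:recurse}(b), applied with $\cA=\binom{[n]}{2}$, which gives
$$R_{ab}(\hat\dv) = \frac{d_a}{d_b}\cdot\frac{1-\Bad(a,b,\hat\dv-\eb)}{1-\Bad(b,a,\hat\dv-\ea)}.$$
Both $\hat\dv-\ea$ and $\hat\dv-\eb$ are graphical by Lemma~\ref{l:graphical}(b), since the assumption $\Delta_1^6=o(nd^2)$ forces $\Delta(\hat\dv)\le\Delta_1\le 2\sqrt{n}-2$ for $n$ large. Using $\cA(i)\setminus\cA(j)=\{j\}$ and $\cA(i)\cap\cA(j)=[n]\setminus\{i,j\}$,
$$\Bad(i,j,\dv') = \frac{1}{d_i}\Big(P_{ij}(\dv') + \sum_{v\ne i,j} P_{ivj}(\dv')\Big),$$
so the task reduces to approximating these edge- and two-path-probabilities at the relevant shifts of $\hat\dv$ to within a relative $(1+O(\Delta_1^6/(d^3n^2)))$ factor.

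I would first establish the leading-order asymptotic $P_{uv}(\dv') = (d_u d_v/M_1(\dv'))(1+O(\Delta_1^2/(dn)))$ by a bootstrap. Lemma~\ref{l:simpleSwitching} supplies the crude bound $P_{uv}(\dv')=O(\Delta_1^2/(dn))$ uniformly for $\dv'$ within $L^1$-distance $O(1)$ of $\hat\dv$. Feeding this into Proposition~\ref{l:recurse}(a), together with the self-consistent ansatz $R_{bu}(\dv'-\ev) = (d_b/d_u)(1+o(1))$ (so that the denominator in~3.1(a) becomes $M_1(\dv'-\ev)/d_u$ to leading order), produces the refinement. For two-path probabilities I apply Lemma~\ref{2Path} with $k_0=1$ (upper) and $k_0=2$ (lower); since the $(k-1)$-st term of $\Sigma^{k_0}$ contributes a relative factor of order $P_{uv}=O(\Delta_1^2/(dn))$, the two bounds sandwich $P_{uvw}(\dv')$ around
$$P_{uvw}(\dv') = \frac{d_u\,d_v(d_v-1)\,d_w}{M_1(\dv')^2}\bigl(1+O(\Delta_1^2/(dn))\bigr).$$

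Substituting these into the formula for $\Bad$, and noting that $\sum_{v\ne i,j} d_v(d_v-1) = M_2(\dv')-O(\Delta_1^2)$, yields
$$\Bad(i,j,\dv') = \frac{d_j\bigl(M_1(\dv')+M_2(\dv')\bigr)}{M_1(\dv')^2}\bigl(1+O(\Delta_1^2/(dn))\bigr).$$
Using $M_1(\hat\dv-\ea)=M_1-1$, $M_2(\hat\dv-\ea)=M_2-2(d_a-1)$ and the analogues for $\eb$, a direct expansion gives
$$\Bad(b,a,\hat\dv-\ea)-\Bad(a,b,\hat\dv-\eb) = \frac{(d_a-d_b)(M_1+M_2)}{M_1^2}+O\!\left(\frac{\Delta_1^6}{d^3n^2}\right).$$
Since each $\Bad$-value is $O(d/n)$, the expansion $(1-\Bad_1)/(1-\Bad_2) = 1+(\Bad_2-\Bad_1)+O(d^2/n^2)$ holds, and the quadratic term is absorbed by the stated error under $\Delta_1^6=o(nd^2)$ (noting $\Delta_1\ge d\ge 1$); multiplying by $d_a/d_b$ then delivers the claim.

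The main obstacle is the bootstrap for $P_{uv}$: Proposition~\ref{l:recurse}(a) is self-referential in $P$ and $R$, so care is required to verify that the crude upper bound from Lemma~\ref{l:simpleSwitching} is sufficient as input to pin down both the leading order and the relative error $O(\Delta_1^2/(dn))$, with the implicit errors closing up consistently. In practice a single round of substitution suffices, exploiting the sparse hypothesis $\Delta_1^6=o(nd^2)$. Once the leading-order formulae for $P_{uv}$ and $P_{uvw}$ are established, the remaining steps are straightforward bookkeeping within the announced error budget.
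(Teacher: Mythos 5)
Your proposal follows the paper's own argument essentially step for step: Proposition~\ref{l:recurse}(b) as the starting identity, the switching bound of \thref{l:simpleSwitching} as crude input, a first pass to pin down $R_{ab}\approx d_a/d_b$, Proposition~\ref{l:recurse}(a) for the refined edge probabilities, \thref{2Path} with $k_0\in\{1,2\}$ to sandwich the two-path probabilities, and then assembly of $\Bad$ and expansion of the ratio. The plan is sound, but three points need repair before it is a proof. First, \thref{l:graphical}(b) requires $d_i\ge 1$ for every $i$, while sequences near $\D^+$ may have zero entries; you must apply the lemma to the $n_1$ non-zero coordinates and verify $\Delta\le 2\sqrt{n_1}-2$, which needs the short computation combining $n_1\ge dn/\Delta_1$ with $\Delta_1^6=o(nd^2)$ (this is exactly what the paper does), rather than the bound $\Delta_1\le 2\sqrt n-2$ you invoke. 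One should also record, as the paper does, that $\Num_{uv}(\dv')>0$ and $P_{uv}(\dv')<1$ on the relevant $L^1$-neighbourhoods, so that the hypotheses of Proposition~\ref{l:recurse} and \thref{2Path} are met (and $k_0$ must be capped at $\min\{2,d_u,d_v\}$ when a degree is small).

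Second, your intermediate error rates are stronger than the bootstrap can deliver. The ``ansatz'' $R_{ba}=(d_b/d_a)(1+o(1))$ must be, and can only be, derived by one application of Proposition~\ref{l:recurse}(b) with the crude bounds (via $P_{uvw}=O(\Delta_1^4/d^2n^2)$ from \thref{2Path} with $k_0=1$), and this first pass yields relative error $O(\Delta_1^4/d^2n)$, not $o$-small without rate. That error then propagates: the refined estimates are $P_{uv}=(d_ud_v/M_1)\big(1+O(\Delta_1^4/d^2n)\big)$ and likewise for $P_{uvw}$, not relative error $O(\Delta_1^2/dn)$ as you claim; similarly $\Bad=O(\Delta_1^2/dn)$, not $O(d/n)$, since $M_2$ can be as large as $\Delta_1M_1$. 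None of this is fatal to your final budget: a relative error $O(\Delta_1^4/d^2n)$ on a quantity of size $O(\Delta_1^2/dn)$ is an absolute error $O(\Delta_1^6/d^3n^2)$, and the quadratic term in the expansion of $(1-\Bad_1)/(1-\Bad_2)$ is $O(\Delta_1^4/d^2n^2)\le \Delta_1^6/d^3n^2$ because $\Delta_1\ge d$, so the claimed error term survives with the corrected rates — which is precisely how the paper closes the argument.
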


 \begin{proof} 
First, let $\dv\in  \D^+$, let $n_1$ be the number of non-zero coordinates in $\dv$.  
By summing vertex degrees we find $dn=M_1(\dv)\le n_1\Delta(\dv)$. 
By assumption we therefore have 
\bel{deltas}
\Delta(\dv)\le \Delta_1 \ll d^{1/3}n^{1 /6} = \frac{(dn)^{1/3}}{n^{1/6}}\leq \frac{\left(\Delta(\dv) n_1\right)^{1/3}}{n^{1/6}},
 \ee
which readily implies that $\Delta(\dv) =o(n_1^{1/2}/n^{1/4})$. Lemma~\ref{l:graphical}\eqref{l:graphicalii} applied to the sequence formed by the  non-zero coordinates of $\dv$ now implies that 
$\Num({\dv})>0$  for $n$ sufficiently large. We can deduce the same conclusion for all $\dv\in Q^0_{8}$, since $\Delta(\dv)$ and $n_1(\dv)$ can only change by bounded factors when moving from such $\dv$ to the closest member of $\D^+$. 
Similarly, $M_1(\dv)=\sum d_i=dn+O(1)$ for all $\dv\in Q^0_{8}$. It is now clear by~\thref{l:simpleSwitching} and~\eqn{deltas} that
\bel{Pbound}
P_{av}(\dv)=O( \Delta_1^2/ dn) = o(1) \quad  \mbox{for all $\dv\in Q^0_{8}$ and all $a\neq v$}.
\ee 
Next consider any distinct $a,v,b\in[n]$ and   $\dv\in Q^0_6$, with $d_a>0$ and $d_v>0$. Then $\dv-\ea-\ev\in Q^0_{8}$ and hence $ \Num(\dv-\ea-\ev)>0$ from above, and also $P_{av}(\dv-\ea-\ev)=O( \Delta_1^2/dn)<1$ 
using~\eqn{Pbound}. Thus, for $n$ sufficiently large, $ \Num_{av}(\dv)>0$ by Lemma~\ref{trick17}, and we have  $ \Num_{av}(\dv)<   \Num (\dv) $ since $P_{av}(\dv)<1$ for similar reasons. 
 This establishes the  hypotheses for $\Num_{av}(\dv)$ and $\Num(\dv)$ in \thref{l:recurse} 
whenever they are needed below. 

 It now follows that $\YPavb_{avb}(\dv)=O( \Delta_1^4/ d^2n^2)$  for $\dv\in Q^0_6$; if $d_a$ or $d_v$  is 0 then this is immediate, and otherwise it follows from \thref{l:recurse}(c) in view of~\eqn{Pbound}, and noting that the numerator is non-negative by definition. 
Next, definition~\eqn{eq:bad} yields $\Bad(a,b,\dv)=O( \Delta_1^4/ d^2n)$ for all distinct $a,b\in [n]$ and all $\dv\in Q^0_6$ with $d_a>0$.  (In the current setting  $ \cA(i)\setminus \cA(j)|=\{j\}$  when $i\ne j$, and $d\le \Delta_1$.)   
Thus~\eqn{Ratformula} gives $R_{ab}(\dv)=d_a/ d_b(1+O( \Delta_1^4/ d^2n))$ for all $\dv\in Q^1_5$ and all distinct $a, b$ such that 
$d_a,\,  d_b> 0$. If now $\dv\in Q_4^0$ and $d_v >0$, we have $\Num_{bv}(\dv)>0$ as noted above. 
Therefore 
$\sum_{b\in \cA^*(v)} d_b = M_1(\dv) -d_v$ for such $\dv$. Thus, Proposition~\ref{l:recurse}(a) gives \bel{Pavsparse}
P_{av} (\dv) = d_a  d_v/M_1(1+O( \Delta_1^4/d^2n))
\ee
 for all $\dv\in Q^0_4$ and $a\neq v$. 
 (If $d_a$ and $d_v$ are both nonzero, the proposition applies as mentioned above, and if either is 0, the claim holds trivially.) 
Using a similar argument, \thref{l:recurse}(c)   gives 
$$
\YPavb_{avb}(\dv)=\frac{d_a[d_v]_2 d_b}{M_1^2}\left(1+O\left(\frac{\Delta_1^4}{d^2n}\right)\right)
$$
for all $\dv\in Q^0_2$ and all distinct $a,v,b\in [n]$.  
Applying these results to the definition~\eqn{eq:bad}  of $B$ for  $\dv\in Q^0_2$ and distinct $a,b\in[n]$, 
and recalling that  $d\le \Delta(\dv)+2$, now gives the sharper estimate
$$
\Bad(a,b,\dv)=\bigg(\frac{d_b}{M_1}  + \frac{ d_bM_2}{M_1^2}\bigg)\bigg(1+O\bigg(\frac{ \Delta_1^4}{d^2n}\bigg)\bigg),
$$
which we note is $O(\Delta^2/d n)$ as $M_2\le M_1\Delta $, where $\Delta$, $M_1$ and $M_2$ are with respect to $\dv$. 
Thus, for all $\dv\in Q_1^1 $ and  all $a,b$, and noting that $M_2$ changes by  a negligible additive term $O(\Delta_1)$ under bounded perturbations of the elements of the sequence $\dv$,
\bel{Req1}
  R_{ab}(\dv) =\frac{d_a}{ d_b}\frac{\big(1 -  (d_b-1)/M_1 -  ( d_b-1)M_2/M_1^2 \big)}
{\big(1 -   (d_a-1)/M_1 -  ( d_a-1)M_2/M_1^2\big)}\bigg(1+O\bigg( \frac{\Delta_1^6}{d^3n^2}\bigg)\bigg),
\ee
which implies the claim. 
\end{proof}
 
 We  next  make the definitions of probability spaces necessary to apply Lemma~\ref{l:lemmaX} (see Steps 2 and 3 in the template). 
 Let $\Omega$
be  the underlying set of $\cB_m(n)$, $\W=\D^+$ and $\cS'=\cD(\G(n,m))$. Let $H(\dv)=\pr_{\cB_m}(\dv) \widetilde H(\dv)$, where 
$$ \widetilde H(\dv) = \exp\left( \frac{1}{4}-\frac{\gtwo^2}{4\mu^2(1-\mu)^2}\right),$$
and define  the probability function in 
$\cS$ by 
\bel{PS}
 \pr_\cS(\dv)=    H(\dv)/\sum_{ \dv'\in\Omega}H(\dv') = \frac{ H(\dv)}{\ex_{\cB_m} \widetilde H}.
\ee
Let $G$ be the graph with vertex set $\W$ and with two vertices (sequences) adjacent if they are of the form $\dv-\ea$, $\dv-\eb$ for some  $a,b\in[n]$  and odd $\dv$. 

We need to estimate the probability of $\W$ in the two probability spaces (see Step 4 in the template). In  $\G(n,m)$ each  vertex degree  is distributed hypergeometrically with expected value $d=2m/n$. 
Also note that, letting  $\Delta^*=\Delta(\dv^*)$, we have by definition $\Delta_1\ge \Delta^*+n^{\eps/12}\sqrt {\Delta^*}\ge d+n^{\eps/12}\sqrt {\Delta^*}$. 
Thus, for   $\dv\in  \Omega$, 
$$
 \pr_{\cS'}(d_i > \Delta_1)\le  \pr_{\cS'}\big(d_i>d+ n^{\eps/12 }\sqrt {\Delta^*}\big)    =o(n^{- \omega})
$$ 
by~\cite[Theorems~2.10 and 2.1]{JLR} for example (and noting $\Delta^*\to\infty$).
 The union bound, applied to each $i$, now gives $\pr_{\cS'}(\W) =  1- o(n^{- \omega})$. 
 
For similar reasons $\pr_{\cB_m} (\W) =  1- o(n^{-\omega})$. To deal with the exponential factor $\widetilde H(\dv)$, 
we claim that if $\dv$ is chosen according to $\cB_m(n)$ then  
$\gtwo (\dv) = \mu(1-\mu)(1+O(\xi))$ with probability $1-o(n^{-\omega})$, where $\xi = O(n^{\eps-1/2})$. 
Indeed, this follows from the forthcoming \thref{l:sigmaConc}(ii) in which we may take $\alpha = \log^2n/\sqrt{n}$ and note that $\log^3n=o(dn)$ is implied by $m/\sqrt{n}\to\infty$. %
 Thus, for such $\dv   \in \cB_m(n)$, the exponential factor $\widetilde H(\dv)$ is 
 $1+O(\xi)$ with probability $1- o(n^{-\omega})$, 
 and it is always at most $e^{1/4}$. We deduce that  
$\ex_{\cB_m} \widetilde H=  1+O(\xi)$  and additionally, $\pr_{\cS} (\W) = \ 1- o(n^{-\omega})$.
 Thus, we may  set $\eps_0= O(1/n)$ in \thref{l:lemmaX} (with apologies to the function $n^{-\omega}$, ending its life in this proof dominated by $1/n$).
   
To apply \thref{l:lemmaX} (see Step 5 in the template above), the final condition we need to show is that the ratios of probabilities satisfy 
\bel{eq:target1}
\frac{\Pr_{\cS'}(\dv-\ea)}{\Pr_{\cS'}(\dv-\eb)} =e^{O(\delta)}\frac{\Pr_{\cS}(\dv-\ea)}{\Pr_{\cS}(\dv-\eb)}
\ee
whenever $\dv-\ea$ and $\dv-\eb$ are elements of $\D^+$, for a particular  $\delta=\delta(\Delta_1)$   independent of $\dv$ and specified below, where  the constant implicit in $O()$ is independent of $\dv$ and $\dv^*$.

To evaluate the right hand side of~\eqref{eq:target1} we observe that 
\begin{align}\label{eq:ratioSparse}
\frac{\Pr_{\cS}(\dv-\ea)}{\Pr_{\cS}(\dv-\eb)} 
= \frac{H(\dv-\ea)}{H(\dv-\eb)}&=   \frac{d_a(n- d_b)}{d_b(n -d_a)}\exp\left(\frac{ (d_a-d_b)   \gamma_2  }{   d^2 (1-\mu')^2 } +O\left({{\Delta}^2\over (dn)^2}\right) \right), 
\end{align}
 for all  $\dv\in Q_1^1$ 
by~\eqref{conjRatio},
where $d$, $\gtwo$  and $\Delta$ are defined with respect to $\dv$, and $\mu'=\mu(\dv-\ea)$. 
Note that   $\mu' = d(1+O(1/nd))/(n-1)$  and   
$d\leq \Delta +1$,  since $\dv\in Q_1^1$.  
 Thus, we also have $\gtwo=(M_2+M_1- d M_1)/(n-1)^2
=O(d \Delta  /n)$. Hence,
the argument of the exponential factor in~\eqn{eq:ratioSparse} is 
$$
\frac{(d_a-d_b)\gamma_2 }{d^2} +O( \Delta^2/n^2)  
=\frac{ (d_a-d_b)(M_2+M_1- dM_1)}{M_1^2}+O( \Delta^2/n^2).
 $$
 Combining this with~\eqn{eq:ratioSparse} and \thref{RSparse} it follows that for all $\dv\in Q_1^1$  
 \bel{ratios}
 R_{ab}(\dv) = \frac{H(\dv-\ea)}{H(\dv-\eb)}\bigg(1+O\bigg( \frac{\Delta_1^6} {d^3n^2}\bigg)\bigg),
\ee
 where we use that $ (n-d_b)/(n-d_a)=\exp\big((d_a-d_b)/n+O(\Delta^2/n^2)\big)$,  
$M_2=O(\Delta M_1)$ 
and $\Delta^4/M_1^2\le \Delta^6/d^3n^2$, $\Delta\le \Delta_1$, and the most significant error term derives from \thref{RSparse}. 

Equation~\eqn{ratios} now implies that  
$$
\frac{  \pr_{\cS'}(\dv-\ea)}{ \pr_{\cS'}(\dv-\eb) } = R_{ab}(\dv) = e^{O(\delta)}\frac{  \pr_{\cS}(\dv-\ea)}{ \pr_{\cS}(\dv-\eb)}
$$
whenever $\dv-\ea$ and $\dv-\eb$ are elements of $\W=\D^+$, 
where we may take 
$
\delta= \Delta_1^6/d^3 n^2.
$

 It is clear that the diameter of $G$ is at most $r:= m =nd/2$. Lemma~\ref{l:lemmaX}  then   implies  that $P_{\cS'}(\dv)=e^{O( r\delta+\eps_0)}P_{\cS}(\dv)$    for $\dv\in \D^+$. 
To proceed from here,  since we found that   $\ex_{\cB_m} \widetilde H =1+O(\xi)$, equation~\eqn{PS} implies $P_{\cS}(\dv) = H(\dv)(1+O(\xi))$ for $\dv\in \D^+$.  Hence, 
\bel{generalError}
P_{\cS'}(\dv^*)=e^{O(r\delta+\eps_0+\xi)}  H(\dv^*). 
\ee
Note that $\xi=O(n^{\eps-1/2})$, and  $r\delta+\eps_0=  O( \Delta_1^6/d^2n)=  O( (\Delta(\dv^*)^6 +{n^\eps})/d^2n)$. The theorem follows since $\cS'=\cD(\G(n,m))$.     
\end{proof}

 The result in Conjecture~\ref{conj1} or~\eqn{formula} follows from this in the sparse case, with different error terms, as long as $d$ is appreciably above $1/\sqrt n$. For smaller $d$, the analysis could be adjusted to obtain results, however the random graph is quite uninteresting here, typically having most vertices of degree 0, and the rest of degree 1 except for perhaps   a few vertices of degree 2.

  We note that the above result applies in the  case of $d$-regular graphs only for $d=o(n^{1/4})$, far short of  $o(\sqrt n)$ as reached in~\cite{MWlow}. It is also quite straightforward to reach past $\sqrt n$ using our method, by carrying the  calculations  a little further, iterating  several more  the recursive equations that are only used  twice in the proof above. In fact, this is how we first obtained the formulae  for $P$ and $R$ in later sections. Having  derived those ``limiting" formulae, our proofs can completely avoid considering the  iterated versions of the formulae, as shown in the next section.

\section{Function operators and fixed points }\lab{sec:FunOps}
In the previous section we used two iterations of the recursive equations from \thref{l:recurse}, each time applying them to all degree sequences at a certain distance from a
root sequence $\dv$. This allowed us to determine the ratio $\Num(\dv-\ea)/\Num(\dv-\eb)$ 
up to negligible error terms. 
For denser graphs we would need  an unbounded number of 
iterations to obtain the desired precision  of about $O(1/n \sqrt{d} )$  since the improvement is $O(\Delta/n)$ each time. 
Instead of doing this, we define operators based on the recursive identities 
from \thref{l:recurse} and study their behaviour on input functions that are close to the desired functions.

Let  $\Z^n$  denote the set of non-negative integer sequences  of length $n$.   
 For a given integer $n$ and a set $\cA\se \binom{[n]}{2}$ we define $\oA$ to be the set of ordered pairs $(u,v)$ with $\{u,v\}\in \cA$.  Ordered pairs are needed here because, although the functions of interest are symmetric in the sense that the probability of an edge $uv$ is the same as $vu$, our approximations to the probability do not obey this symmetry. Similarly, let $\oA_2$ denote the set of ordered triples $(u,v,w)$ with $u$, $v$ and $w$ all distinct and $\{u,v\},\{v,w\}\in \cA$. 
                                  
Suppose we are given  $\pv:  \oA  \times \Z^n \to  \reals_{\ge 0}$,  $\yv :  \oA _2 \times \Z^n \to\reals_{\ge 0}$    and  $\rv : [n]^2 \times \Z^n \to\reals_{\ge 0}$. We  write $\pv_{av}(\dv)$ for $\pv(a,v,\dv)$ (where $\dv\in \Z^n $), and  remind 
the reader that in this paper, a subscript  $av$   always denotes an ordered pair rather than an edge. 
Similarly, we write  $\yv_{avb}(\dv)$ for $\yv(a,v,b,\dv)$ and $\rv_{ab}(\dv)$ for $\rv(a,b,\dv)$.  
  We  also define  an  associated function  $\bad({\pv,\yv})$ as follows. 
For $\dv\in \Z^n $ and $a,b\in [n]$  with $a\neq b$, set 
$\bad(\pv,\yv)(a, a, \dv) = 0$ and 
\begin{equation}\lab{def:bad}
\bad({\pv,\yv})(a, b, \dv)  = 
\frac{1}{d_{a}}
\left( 
\sum_{v\in \cA(a)\sm \cA(b)} \pv_{av}(\dv) 
+ \sum_{v\in \cA(a)\cap \cA(b)} \yv_{avb}(\dv) \right).
\end{equation}
We define   operators 
$\Pc(\pv,\rv)$, $\Yc(\pv,\yv)$   and $\Rc(\pv,\yv)$, acting on $\pv$, $\yv$ and $\rv$ as above, 
  as follows.   
For $\dv \in \Z^n$ 
and $a,v,b \in [n]$,  we set 
\begin{align}
\Pc(\pv,\rv)(a,v,\dv)  & = 
d_{v} \left(\sum_{b\in \cA(v)}
 \rv_{ba} ({\bf d} - \ev) 
\frac{1-\pv_{bv}({\bf d} - \eb - \ev)}
{1-\pv_{av}({\bf d} -\ea - \ev)}\right)^{-1}  \text{ for } (a,v)\in \oA, \lab{F1def}\\
 \Yc(\pv,\yv)(a,v,b,\dv)  & =   
  \frac{\pv_{av}(\dv)\cdot\big(\pv_{bv}(\dv-\ea-\ev) - \yv _{avb}(\dv-\ea-\ev)\big)}{1-\pv_{av}(\dv-\ea-\ev)}   \text{ for } (a,v,b)\in \oA_2
,  \lab{FQdef}\\
\Rc(\pv,\yv)(a,b,\dv) &= 
\frac{d_{a}}{d_{b}}\cdot \frac{1- \bad(\pv,\yv)(a,b,\dv-\eb)}{1- \bad({ \pv,\yv})(b,a, {\bf d} - \ea)}. \lab{F2def}
\end{align}

\thref{l:recurse} says that in a certain sense, the probability and ratio functions $P_{av}$   $\YPavb_{avb}$ and $R_{ab}$ are fixed points of the operators $\cP$, $\Yc$ and $\Rc$. It is very useful for us that these operators are ``contractive", in a certain sense, in a neighbourhood of this fixed point. Unfortunately, the concept of contraction which we have here uses a slightly different metric before and after applying the operators, stemming from the fact that the value of $\Rc(\pv,\yv)$ at a point $(a,b,\dv)$ depends on values   of $\pv$ and $\yv$ at points $(c,w,\dv')$ and $(c,w,h,\dv')$ for several   $\dv'$ in a neighbourhood of $\dv$. This makes it difficult to define a true and useful  contraction mapping. 
Nevertheless, we can exploit the useful features of the situation using the following lemma.

\begin{definition}\thlab{def:Pi-set} 
Let $\D_0\se\Z^n $  and let $\mu \in \reals$. We use  $\Pi_\mu(\D_0)$  to denote the set of  pairs of functions $(\pv,\yv)$ with $\pv : \oA \times \Z^n \to  \reals_{\ge 0}$ and $\yv :  \oA _2 \times \Z^n \to\reals$ 
  such that for all even $\dv\in \D_0$, we have
 \begin{itemize}
 \item[$(\Pi a)$]
 $0\le\pv_{av}(\dv)   \le \mu$ for all $ (a,v) \in \oA$, 
 \item[$(\Pi b)$]
 $\sum_{v\in \cA(a)\cap\cA(b)} \yv_{avb}(\dv)\le  \mu d_a $ for all $a\ne b\in [n]$,  and 
 \item[$(\Pi c)$]
$0\le \yv_{avb}(\dv)\le     \mu   \pv_{bv}(\dv)$ for all  $ (a,v,b)\in  \oA_2$.
\end{itemize}
\end{definition} 
 
We denote by $Q_s^0(\dv),\, Q_s^1(\dv)\, \se\mathbbm{Z}^n $  the set of even and odd, respectively,  
  vectors of arbitrary integers  
that have  $L_1$ distance at most $s$ from $\dv$.  
Recall that we use $1\pm \xi$ to denote a quantity in the interval $[1-\xi,1+\xi]$.

\begin{lemma}\thlab{l:errorImplication} 
There is a constant $C>0$ such that the following holds. Let~$n$  be an integer and $\cA\se \binom{[n]}{2}$. 
Let $\dv=\dv(n)\in\Z^n$ satisfy $|\cA(a) \setminus  \cA(b)|<  d_a$   whenever $\cA(a)\cap\cA(b)\ne \emptyset$. 
Let $0<\xi \le 1$ and $0<\mu_0=\mu_0(n)<C$. Let $(\pv,\yv)$ and $(\pv', \yv')$ be members of $\Pi_{\mu_0}( Q_{2}^0(\dv))$, and let
 $\rv, \rv':[n]^2\times \Z^n \to \reals$.
 Let $a,v,b\in [n]$. 
  \begin{itemize}
\item[(a)]  
If $\dv$ is odd,  $\cA(a)\cap\cA(b)\ne \emptyset$, $\pv_{cw}(\dv')=\pv'_{cw}(\dv')(1 \pm\xi )$ for all  $(c,w)\in \oA$  and all $\dv' \in  Q^0_{  1}(\dv)$, and
 $\yv_{cwh}(\dv')=\yv'_{cwh}(\dv')(1 \pm\xi )$ for all  $ (c,w,h)\in  \oA_2 $  and all $\dv' \in  Q^0_{1}(\dv)$,
     then 
$$
\Rc({\pv,\yv})_{ab}(\dv)= \Rc({\pv',\yv'})_{ab}(\dv)(1+O (\mu_0\xi  )). 
$$
\item[(b)]  
If $\dv$ is even, $(a,v)\in \oA$, $\pv_{cv}(\dv')=\pv'_{cv}(\dv')(1 \pm\xi )$ for all $c \in \cA(v) $ and all $\dv' \in  Q^0_{2}(\dv)$, and $\rv_{ca}(\dv')=\rv_{ca}'(\dv')(1 \pm\mu_0\xi)$ for all $c \in \cA(v)$ and all $\dv'\in  Q^1_1 (\dv)$,  then  
$$
\Pc({\pv,\rv})_{av}(\dv)=\Pc({\pv',\rv'})_{av}( \dv)\left(1+O (\mu_0\xi  )\right). 
$$
\item[(c)]  
If $\dv$ is even, $(a,v,b)\in \oA_2$, $\pv_{cv}(\dv')=\pv'_{cv}(\dv')(1 \pm\mu_0\xi  )$ for all $c \in \cA(v) $ and all $\dv' \in  Q^0_{2}(\dv)$, and  $\yv_{cwh}(\dv')=\yv'_{cwh}(\dv')(1 \pm\xi )$ for all  $(c,w,h)\in  \oA_2 $  and all $\dv' \in  Q^0_{2}(\dv)$,
then 
$$
\Yc({\pv,\yv})_{avb}(\dv)=\Yc({\pv',\yv'})_{avb}(\dv) (1 +O\left(\mu_0 \xi \right) ). 
$$
\end{itemize}
The constants implicit in $O(\cdot)$ are absolute. 
  \end{lemma}
\thref{l:errorImplication} indicates that applying the operators once (one after the other) to functions that are close in terms of relative error yields functions that are closer by a factor of $\mu_0$, where in our applications of the lemma, $\mu_0$ will be a (very) loose upper bound on the density of the graphs of interest. We define suitable distance functions below to make this idea precise. 
We also remark that in (a) we restrict to $\cA(a)\cap \cA(b)\neq\emptyset$ since otherwise no vertex $v$ satisfies $avb\in \oA_2$. This will be useful for an application to bipartite graphs in a later paper. Note that when $\cA = \binom{[n]}{2}$ we have $\cA(a)\cap \cA(b)\neq\emptyset$ for all $a,b \in [n]$.

\begin{proof}  For (a), since $\pv_{cw}(\dv')=\pv'_{cw}(\dv')(1 \pm\xi )$ and  
 $\yv_{cwh}(\dv')=\yv'_{cwh}(\dv')(1 \pm\xi )$, from~\eqn{def:bad} and the non-negativity of the functions in $\Pi_{\mu_0}$, we obtain that $\bad({ \pv,\yv})(a,b,\dv') =\bad({\pv',\yv'})(a,b,\dv')\big(1 +O(\xi )\big)$    for all $\dv'\in  Q_1^0 (\dv)$.  
Additionally for such $\dv'$, the assumption that $(\pv',\yv')\in\Pi_{\mu_0}( Q_{2}^0(\dv))$, together with the assumption that  $|\cA(a) \setminus  \cA(b)|<  d_a$ for $\cA(a)\cap\cA(b)\neq \emptyset$, imply that
$\bad(\pv',\yv')(a, b, \dv') =   O( \mu_0)$.    Thus
$\bad( \pv,\yv)(a, b, \dv') =\bad(\pv',\yv')(a, b, \dv')  +O(\xi\mu_0)$  for all $\dv'\in Q_1^0(\dv)$. 
Hence, with $\mu_0$ sufficiently small to ensure $\bad(\pv,\yv)(b,a,\dv-\ea)<1/2$ say, 
part~(a) follows from~\eqn{F2def}.

The equation for $\Pc(\pv,\rv)$ in (b) follows similarly from~\eqn{F1def} since both $\pv_{bv}$  and $\pv_{av}$ are  bounded by $\mu_0$, so in particular the denominator in~\eqn{F1def} is bounded away from 0. Finally, (c) follows easily via~\eqn{FQdef} and noting the bound of $\mu_0  \pv_{bv}(\dv')$ on $ \yv _{avb}(\dv')$.
\end{proof}

Fix $\Omega^{(0)}\se \Z^n $  for the following definitions. Let $ \Omega^{(s)}$ denote the set of all $\dv\in \Omega^{(0)} $   for which  $Q_s^0(\dv),\, Q_s^1(\dv) \se \Omega^{(0)}$. We will make use of these sets as restricted domains for functions of $\dv$ that refer to slightly altered sequences $\dv'$. 
Define a set of distance functions, indexed by $s$, on the set of all $ (\pv,\yv) $ for which  $\pv:  \oA  \times  \Z^n \to \reals_{\ge 0}$ and $\yv:  \oA_2  \times  \Z^n \to \reals_{\ge 0}$ by 
\begin{align}
\chi^{(s)} \big((\pv,\yv ), (\pv',\yv')\big) &= \max 
\left\{\chi_1 ^{(s)} (\pv,\pv'),\chi_2 ^{(s)} ( \yv,\yv') \right\}, \qquad \mbox{where}\\
\chi_1 ^{(s)} (\pv,\pv') &= \sup 
\left\{\big|\log\big( \pv_{cw}(\dv)/\pv'_{cw}(\dv)\big)\big|:  (c,w)\in \oA,\  \dv\in \Omega^{(s)} \right\},\\
\chi_2 ^{(s)} ( \yv,\yv') &= \sup 
\left\{\big|\log\big( \yv_{cwh}(\dv)/\yv'_{cwh}(\dv)\big)\big|:  (c,w,h)\in \oA_2,\  \dv\in \Omega^{(s)} \right\}.\end{align}
 If any denominator is 0 we define $\chi_i^{(s)}$, $i=1,2$, to be $\infty$ at that point, so these are extended metrics. Clearly $\chi^{(s)}$ is non-increasing in $s$ unless $\Omega^{(s)}$ is  empty, in which case we set $\chi^{(s)}=0$. 
 Also, define the compositional operator
$$\Cc(\pv,\yv)=\big(\hat \pv,\Yc(\hat \pv,\yv)\big) $$
where $\hat \pv=\Pc\big(\pv,\Rc(\pv,\yv)\big)$.

\begin{lemma}\thlab{c:contraction}
There is a constant $C>0$ such that the following holds. 
Let  $n$  and $\cA$ be 
as in \thref{l:errorImplication}, 
let $\Omega^{(0)}\se \Z^n $. 
Let $ \xi \le 1/2$ and $0<\mu_0=\mu_0(n)<C$.   
Assume that $s\ge 0$ such that  $(\pv,\yv)$,   $(\pv',\yv')  \in  \Pi_{ \mu_0}\big( \Omega^{(s)} \big)$, and 
  $\chi^{(s)} \big((\pv,\yv ), (\pv',\yv')\big)\le \xi$. Then $\chi^{(s+ 4)}(\cC(\pv,\yv), \cC(\pv',\yv')) =O(\mu_0\xi)$.
\end{lemma}

\proof
Let $\dv\in \Omega^{( s+1)}$  be odd. Then $ Q_{2}^0(\dv)=  Q_{1}^0(\dv)\subseteq \Omega^{(s)}$, and so   $(\pv,\yv)$, $(\pv', \yv')\in \Pi_{\mu_0}( Q_{2}^0(\dv))$, as required for the lemma. Since $\chi^{(s)} \big((\pv,\yv ), (\pv',\yv')\big)\le \xi \le 1/2 $,   we have $\pv_{cw}(\dv')=\pv'_{cw}(\dv')(1\pm 2\xi)$ for all   $(c,w)\in \oA$  and  $\dv' \in  Q^0_{1}(\dv)$,  and a similar statement holds for $\yv$, $\yv'$. 
 We can thus apply \thref{l:errorImplication}(a) and, defining $\rv=\Rc(\pv,\yv)$ and $\rv'=\Rc(\pv',\yv')$, deduce that $ 
\rv_{ab}(\dv)= \rv'_{ab}(\dv)(1+O (\mu_0\xi  ))  
$ for all such $\dv$ and for $a,b$ as in that lemma. Preparing for the next step, note that the error term $O (\mu_0\xi  )$ is $\pm \xi$ for $C$ sufficiently small. 

Now let $\dv\in \Omega^{( s+  2)}$ be even and define $\hat \pv=\Pc(\pv,\rv)$ and $\hat \pv'=\Pc(\pv',\rv')$. 
Applying~\thref{l:errorImplication}(b)  in a similar way 
gives 
$$
\hat\pv_{cw}(\dv)=\hat\pv'_{cw}(\dv)\big(1+O(\mu_0 \xi)\big) 
$$
for  $(c,w)\in  \oA$, and again we may assume the error term is $\pm \xi$.

Finally, we may repeat the process with $\dv\in \Omega^{( s+ 4)}$,   and use \thref{l:errorImplication}(c)  to deduce that
$$
\Yc(\hat \pv,\yv)_{avb}(\dv)=\Yc(\hat \pv',\rv')_{avb}(\dv) (1 +O\left(\mu_0 \xi \right) ) 
$$
for all $(a,v,b)\in\oA_2$ for such $\dv$. 
 Since the error terms $\mu_0\xi < C/2$ in the last two conclusions can be made arbitrarily small by taking $C$ small,  we have $ \log(1+O(\mu_0 \xi))=O(\mu_0 \xi)$ as required to deduce the lemma. \qed

\section{Proof of the binomial model in the graph case}\lab{s:completinggraphs}

 In this section we prove \thref{conj2} for $p$  in the ``gap range" which we can describe as $o(n^{-1/2})<p< c/\log n$.  
Before doing so we need concentration results for some functions $f(\dv)$ when $\dv$ 
has either independent binomial entries, or is the degree sequence of $\G(n,m)$.  
In~\cite[Theorem 3.4]{degseq1} it was essentially shown that, 
when $\dv$ has independent binomial entries,     
   $\sigma^2=\sigma^2(\dv)= \sum_{i=1}^n(d_i-d)^2/n$  is concentrated. 
   We give a more efficient proof of the crucial  part of this, 
   using the following  result, which 
we will also apply to the degree sequence of $\G(n,m)$. This is a direct corollary of Theorem (7.4) and Example (7.3) of McDiarmid~\cite{McD}. However, since the constants there are not explicit and the framework makes the proof not so easily accessible, we give a proof here.
\begin{lemma}[McDiarmid]
\thlab{l:subsetConc}  Let $c>0$ and let $f$ be a function  defined on the set of   subsets of some set $U$ such that $\size{f(S)-f(T)}\le c$ whenever $|S|=|T|=m$ and $|S\cap T|=m-1$. Let $S$ be a randomly chosen $m$-subset of $U$.   Then   for all $\alpha>0$ we have
$$
\pr\left(\size{f(S)-\ex f(S)} \ge \alpha c\sqrt m  \right) \le 2\exp (- 2\alpha^2).
$$
 \end{lemma}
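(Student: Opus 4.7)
The plan is to use a Doob martingale induced by revealing a uniformly random permutation of $U$ one coordinate at a time, and then apply the Hoeffding--Azuma inequality in the form that uses the lengths of the intervals in which the martingale increments lie. Write $N = |U|$, let $\pi$ be a uniformly random permutation of $U$, and observe that $S := \{\pi(1),\ldots,\pi(m)\}$ has the required distribution. Set $\mathcal{F}_i = \sigma(\pi(1),\ldots,\pi(i))$ and $X_i = \ex[f(S)\mid \mathcal{F}_i]$, so $(X_i)$ is a martingale with $X_0 = \ex f(S)$ and $X_m = f(S)$.

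The crux is to show that for each $1\le i\le m$ the increment $X_i - X_{i-1}$ lies, conditionally on $\mathcal{F}_{i-1}$, in some deterministic interval of length at most $c$. Fixing the history, write $W = U\setminus\{\pi(1),\ldots,\pi(i-1)\}$ and put $Y_u := \ex[f(S)\mid \mathcal{F}_{i-1},\,\pi(i)=u]$ for $u\in W$. For any $u,v\in W$, the transposition $u\leftrightarrow v$ applied to the remaining positions of $\pi$ is a measure-preserving bijection between the conditional laws of $(\pi(i+1),\ldots,\pi(N))$ given $\pi(i)=u$ and given $\pi(i)=v$. Under the resulting coupling, let $j$ be the position at which $v$ appears in the $u$-sample. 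If $j\le m$ then both $u$ and $v$ already lie in $S$, so the swapped set $S'$ equals $S$; if $j>m$ then $S' = (S\setminus\{u\})\cup\{v\}$, a single-element swap. In either case the hypothesis gives $|f(S)-f(S')|\le c$, hence $|Y_u - Y_v|\le c$. Because $X_i = Y_{\pi(i)}$ and, by the tower property, $X_{i-1} = |W|^{-1}\sum_{u\in W} Y_u$, both $X_i$ and $X_{i-1}$ lie in a common interval of length at most $c$.

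It follows that there exist $\mathcal{F}_{i-1}$-measurable random variables $a_i\le b_i$ with $b_i-a_i\le c$ such that $X_i-X_{i-1}\in[a_i,b_i]$ a.s. The Hoeffding--Azuma inequality for martingales with increments lying in intervals of controlled length then yields
$$
\pr\big(|X_m - X_0| \ge t\big) \;\le\; 2\exp\!\left(-\frac{2t^2}{\sum_{i=1}^m (b_i-a_i)^2}\right) \;\le\; 2\exp\!\left(-\frac{2t^2}{mc^2}\right),
$$
and substituting $t = \alpha c\sqrt{m}$ gives the claimed bound $2\exp(-2\alpha^2)$.

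The main obstacle is securing the length-$c$ (rather than length-$2c$) bound on each increment: a naive argument bounding $|X_i - X_{i-1}|$ by $c$ directly would only give $\exp(-\alpha^2/2)$ via standard Azuma, losing the factor of $4$ in the exponent that the lemma asserts. The transposition coupling is precisely what refines the argument enough to recover the correct constant; the remainder of the proof is then an invocation of the standard Hoeffding--Azuma bound.
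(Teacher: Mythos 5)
Your proof is correct and follows essentially the same route as the paper: a Doob martingale over the sequential revelation of the random $m$-subset, a transposition coupling showing that the conditional expectations corresponding to two different choices of the next element differ by at most $c$, and then an Azuma/McDiarmid-type bound. Your explicit appeal to the interval (conditional range at most $c$) form of Hoeffding--Azuma is exactly what the paper's citation of McDiarmid's Corollary (6.10) supplies, so the constant $2\alpha^2$ in the exponent is justified in the same way.
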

\proof  
Consider a  process in which the random subset $S$ is generated by inserting $m$ distinct elements one after another, each randomly chosen from the remaining available ones. Let $S_k$ denote the $k$th subset formed in this process, $0\le k\le m$. Consider the Doob martingale process determined by $Y_k=\ex\big(f (S_m) \mid S_0,\ldots, S_k  \big)=\ex\left(f(S_m) \mid  S_k\right)$. Given $S_{k-1}$, let ${\bf X} = X_k,\ldots, X_m$ denote the remaining elements added in the process.   Let ${\bf X}_0$ be the random sequence ${\bf X}$  conditioned on $X_k=x_k\in U$, and ${\bf X}_1$  the random sequence   ${\bf X}$ conditioned on $X_k=x_k'\in U$. Then ${\bf X}_0$ can be coupled with ${\bf X}_1$      by interchanging $x_k$ and $x_k'$ wherever they occur in ${\bf X}_0$. The   values of $f(S_m)$ in the two elements of a couple pair differ by at most $c$ by assumption. Since each possibile realisation of   $X_k,\ldots, X_m$   has the same probability, it follows that 
$$
\size{\ex\left(f(S_m)\mid  S_{k-1}\wedge X_k=x_k \right)- \ex\left(f(S_m)\mid  S_{k-1}\wedge X_k=x_k'\right)}\le c,
$$
 and hence 
$|Y_{k-1}-Y_k|\le c$. Azuma's Inequality (see, e.g., \cite{JLR}), or alternatively~\cite[Corollary (6.10)]{McD}, now completes the proof.
\qed

  Recall that by $\omega$ we denote a function that tends to $\infty$ arbitrarily slowly with $n$, and that $\cB_m (n)$ is  a sequence of $n$ i.i.d.~random variables each distributed as ${\rm Bin}(n-1,p)$ conditioned on $\sum d_i=2m$. 
\begin{lemma}
\thlab{l:sigmaConc}  Define $\dv=(d_1,\ldots, d_n)$ as either (a) the degree sequence of a random graph in $\G(n,m)$, or   (b)  a sequence in $\cB_m (n)$. Let $d=2m/n$. Then  
\begin{enumerate}[(i)]
\item for $1\le i\le n$ and  all $\alpha>0$   we have 
$$
\Pr(|d_i-d|\ge \alpha  )\le 2\exp\bigg(-\frac{\alpha^2}{2(d+\alpha/3)}\bigg);
$$ 
\item  if $\log^3 n=o(dn) $ and $\alpha$ satisfies $(\log n)/\sqrt n +(\log^{3/2} n)/\sqrt{dn}=o(\alpha)$ then 
 we have 
$$\pr(|\sigma^2 - \Var\, d_1| \ge \alpha d +1/n) = o(n^{-\omega }).$$ 
Moreover, $ \Var\, d_1 = d(n-d)/n+O(d/n)$.  
\end{enumerate}

\end{lemma}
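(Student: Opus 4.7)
For part (i), the key observation is that in both models the marginal distribution of $d_1$ is hypergeometric with mean $d$. In case (a), $d_1$ counts, among the $\binom{n}{2}$ unordered pairs on $[n]$, how many pairs incident to vertex~$1$ are selected in the uniform $m$-subset, so $d_1 \sim \mathrm{Hyp}(\binom{n}{2}, n-1, m)$. In case (b), a direct calculation of $\Pr(d_1=k \mid \sum_j X_j = 2m)$, using the independence of $X_1$ from $\sum_{j\ge 2}X_j \sim \mathrm{Bin}((n-1)^2, p)$, gives $d_1 \sim \mathrm{Hyp}(n(n-1), n-1, 2m)$. Both distributions have mean $d$ and variance at most $d$, so the claimed Bernstein-type tail follows from a standard Chernoff/Hoeffding estimate for the hypergeometric distribution, as in~\cite[Theorems~2.1 and~2.10]{JLR}.

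The ``moreover'' claim in (ii) follows by substituting each of the two parametrisations into the closed form $n^{*} K (N-K)(N-n^{*})/(N^{2}(N-1))$ for the hypergeometric variance; in both cases this simplifies to $d(n-d)/n + O(d/n)$ after expansion.

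For the concentration of $\sigma^2$ around $\mathrm{Var}\,d_1$ I would apply \thref{l:subsetConc} to the function $f(S) = \sum_i d_i(S)^2$, where $S$ is viewed as the uniform $m$-subset of $\binom{V}{2}$ in case (a), and as the uniform $2m$-subset of the $n(n-1)$ ``box-slots'' (with $d_i$ the count of slots chosen in box $i$) in case (b). A single swap changes exactly two of the $d_i$'s by $\pm 1$, so $f$ is Lipschitz with constant at most $4\Delta+4$. By part (i) applied with tail parameter $C\sqrt{d\log n}$ and a union bound, the event $\{\Delta\le\Delta_1\}$ with $\Delta_1:=d+C\sqrt{d\log n}$ has probability $1-o(n^{-\omega})$ for suitable $C=C(\omega)$. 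Truncating $f$ to an everywhere $O(\Delta_1)$-Lipschitz function that agrees with $f$ on this event and applying \thref{l:subsetConc} with deviation parameter $\omega\log n$ gives $|f-\mathbf{E}f| = O(\Delta_1 \sqrt{m}\log n)$ off an $o(n^{-\omega})$-probability set. Dividing by $n$ and absorbing the $O(d/n)$ discrepancy between $\mathbf{E}f/n - d^2$ and $\mathrm{Var}\,d_1$ into the $1/n$ additive term in the conclusion yields $|\sigma^2-\mathrm{Var}\,d_1|\le \alpha d + 1/n$ with probability $1-o(n^{-\omega})$ under the stated hypotheses.

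The principal technical obstacle will be the dense regime $d\gg\log n$: the worst-case Lipschitz bound $O(\Delta_1)=O(d)$ used above is wasteful, since a typical swap moves $\sum d_i^2$ by only $O(\sqrt d)$ (as $|d_i-d_j|$ is typically that small), so the plain McDiarmid form of \thref{l:subsetConc} may come up a factor $\sqrt d$ short of matching the stated $\alpha$ in this regime. If so, the remedy is to replace the Hoeffding--Azuma step by a Bernstein-type bounded-differences inequality (Bousquet, or Boucheron--Lugosi--Massart) whose variance proxy $\sum_i \mathbf{E}(\Delta_i f)^2 = O(d^2/n)$ is a factor $d$ smaller than the worst-case sum of squared Lipschitz constants, delivering sub-Gaussian concentration of $\sigma^2$ at scale $O(d/\sqrt n)$ --- comfortably inside the target $\alpha d$.
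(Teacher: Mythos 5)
Your part (i) and the variance formula are fine and agree with the paper's treatment (hypergeometric marginals in both models, quoting~\cite{JLR}). The problem is the heart of (ii): as you yourself concede, concentrating $f(S)=\sum_i d_i^2$ via the worst-case bounded-differences constant $O(\Delta_1)=O(d)$ gives fluctuations of $\sigma^2$ of order $d^{3/2}\log n/\sqrt n$, a factor $\sqrt d$ larger than the target $\alpha d$ throughout the regime the lemma is actually needed for (the gap range $d\gg\log^2 n$). The proposed repair is not a proof: the claim that a Bernstein/Bousquet/Boucheron--Lugosi--Massart inequality applies with variance proxy $O(d^2/n)$ is asserted rather than established, those inequalities are stated for independent coordinates rather than for a uniformly random $m$-subset, and the ``typical swap moves $\sum d_i^2$ by $O(\sqrt d)$'' observation is only a typical-case statement --- converting it into a bound on conditional variances of the Doob martingale requires controlling the bad event of large degrees inside the martingale, i.e.\ exactly the truncation difficulty you are trying to avoid. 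So as written there is a genuine gap: the concentration of $\sigma^2$ at scale $\alpha d$ is not proved.

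The fix used in the paper is much simpler and stays entirely within \thref{l:subsetConc}: do not concentrate $\sum_i d_i^2$, but instead the truncated \emph{centred} sum $f(G)=\sum_i \min\{(d_i-d)^2,x\}$ with $x=\omega(d\log n+\log^2 n)$, where $d=2m/n$ is a deterministic constant. Changing one $d_j$ by $1$ changes $\min\{(d_j-d)^2,x\}$ by less than $2\sqrt x$, so an edge (or slot) swap changes $f$ by $O(\sqrt x)=O(\sqrt{d\log n}+\log n)$ \emph{uniformly over all sequences} --- this is precisely the $\sqrt d$ saving you were hoping to get from a variance-sensitive inequality, but obtained for the worst case, so plain McDiarmid suffices and gives $\Pr(|f-\ex f|\ge\alpha dn)=o(n^{-\omega})$ under the stated hypotheses on $\alpha$. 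Part (i) plus a union bound shows $\max_i|d_i-d|<\sqrt x$ except with probability $o(n^{-\omega})$, on which event $f=n\sigma^2$; the discrepancy $|\ex f-n\ex\sigma^2|=O(n^3)\Pr(f\ne n\sigma^2)$ is negligible and is absorbed, together with $\ex\sigma^2-\Var d_1=O(1/n)$ in your normalisation, into the additive $1/n$ term. In short, the key idea you are missing is to centre at $d$ \emph{before} truncating the squares, so that the bounded-differences constant itself is $O(\sqrt{d\log n})$ rather than $O(d)$; no Bernstein-type machinery is needed.
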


\begin{proof} 
We deal with the graph case (a)  first. 
Each  vertex degree $d_i$ is distributed hypergeometrically with parameters ${n\choose 2},m,n-1$, expected value $d$, and hence  
(i) holds by~\cite[Theorems~2.10 and~2.1]{JLR}. 
For a graph $G$ with degrees $d_1,\ldots, d_n$   define
$$
f = f(G)=\sum_{i=1}^n \min\{(d_i-d)^2,  x\}, 
$$
where $x>1$ is specified below.
Then increasing or decreasing the value of a single $d_j$  by 1 whilst holding $d$ fixed can only change $f$ by at most $ (\sqrt{ x})^2 - (\sqrt{x}-1)^2 <2\sqrt x$. Since $G$ is determined by a random $m$-subset of all possible edges,    Lemma~\ref{l:subsetConc} applies with $c=8\sqrt x$ (as each edge in the symmetric difference of $S$ and $T$ affects two vertex degrees). Replacing $\alpha$ appropriately gives
\begin{align*}
\pr(|f(G)-\ex f(G)|  \ge \alpha dn) &\le 2\exp (- \alpha^2 dn  / (32x))\\
  & =o(n^{-\omega })
\end{align*}
provided that  $\alpha^2 dn/(x\log n)\to\infty$. 
On the other hand, 
let    $A$ denote the event $\max_i|d_i-d|\ge \sqrt x$. By (i) and the union bound applied over all $n$ values of $i$, we have   $\pr(A)= o(n^{-\omega })$ as long as we choose $x=\omega(d\log n +\log^2 n)$.  By the bound on $\alpha$, there exists $x$ satisfying both conditions, and at this point we set $x$ as such. 
Provided that $A$ does not hold,  we have $f(G)= \sum(d_i-d)^2=n \sigma^2$. We thus conclude
$$
\pr\left(\size{\sigma^2 -\ex f(G)/n}  \ge \alpha d\right) =o(n^{-\omega}) +O(1)\pr(f(G)\ne  n\sigma^2  )=o(n^{-\omega}).
$$
Now evidently
$$
\size{\ex f(G) - n \ex  \sigma^2}  =O(n^3) \pr(f(G)\ne  n \sigma^2 )=o(n^{-\omega}) 
$$
and thus 
$$
\pr\left( \size{\sigma^2 - \ex \sigma^2}  \ge \alpha d+ 1/2n\right) =o(n^{-\omega}).
$$
Noting that $\ex (d_i-d)^2=\Var\,  d_1 $, we obtain part (ii) for (a). The estimate for $\Var\,  d_1$ follows from the standard formula for variance of this hypergeometric random variable.

For the binomial random variable case (b), essentially the same argument applies for both (i) and (ii), 
 by regarding $ d_1,\ldots, d_n $ each as a sum of $n-1$ independent indicator variables. Conditioning on  the sum  being $2m$ is equivalent to a uniformly   random selection of a $2m$-subset of the $n(n-1)$ indicator variables.
 \end{proof}

We shall see that the following  establishes \thref{conj2}  
in the gap range with explicit error terms. 
 Recall that  given a sequence $\dv$ we write $M_1=M_1(\dv)=\sum_i d_i$, $d=d(\dv)=M_1/n$, $\mu=\mu(\dv)=d/(n-1)$, and $\sigma^2(\dv) = \frac{1}{n} \sum_{i=1}^n(d-d_i)^2$ where $n$ is the length of the sequence $\dv$.

 \begin{thm}\thlab{t:handCalc}
Let $n$ and $m$ be integers, 
and assume that $\mu_1 =   2m/n(n-1)$ satisfies $(\log n)^K/n < \mu_1 =o\big(1/ (\log n)^{3/4} \big)$
for all fixed $K>0$. 
Let $\D$ be the set of sequences $\dv$ of length $n$ satisfying the following for some constant  $C\ge 2$:  
\begin{enumerate}[(i)]
\item $M_1(\dv) = 2m$ (and thus $\mu = \mu(\dv)=\mu_1$  and  $d=d(\dv)=2m/n$), 
\item $\size{d_i -   d} \le C \sqrt{ d\log n} $ for all $i\in[n]$, 
\item $\sigma^2(\dv) \leq 2  d$.  
\end{enumerate} 
Then 
\begin{enumerate}[(a)]
\item in each of the models $\cB_m(n)$ and $\cD(\G(n,m))$  we have 
$\Pr(\D)=1-n^{-h(C)}$, where $h(x)\to\infty$ as $x\to\infty$, and 
\item for $\dv=\dv(n)\in\D$ we have 
	$$ \Pr_{\cD(\G(n,m))}(\dv) 
	= \Pr_{\cB_m}(\dv) \exp\Big( {1\over 4} - {\gtwo^2\over 4\mu^2(1-\mu)^2} \Big) 
	\left(1+O\left( \frac{1}{\sqrt{ d}}+ \frac{ d\sqrt{\log n}}{n}
	+ \frac{   d^2(\log n)^{3/2}}{n^2}
	 \right)\right),$$
	where $\gtwo=\gtwo(\dv) = \frac{1}{(n-1)^2}\sum_i(d_i- d)^2$.
\end{enumerate}
\end{thm}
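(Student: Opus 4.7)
The plan is to handle the two parts separately, with very different tools. Part~(a) is essentially a direct corollary of \thref{l:sigmaConc}. For condition~(ii), I would apply part~(i) of that lemma with $\alpha = C\sqrt{d\log n}$ and take a union bound over $i\in[n]$: since $\alpha^2/(2(d+\alpha/3)) = (C^2/2+o(1))\log n$, the failure probability is at most $n^{1 - C^2/2 + o(1)}$. For condition~(iii), part~(ii) of the same lemma (applicable because $\log^3 n = o(dn)$ follows from $\mu \ge (\log n)^K/n$ for every $K$) with any $\alpha$ just above the stated threshold gives $\sigma^2 = \mathrm{Var}\,d_1 \cdot (1+o(1)) < 2d$ except with probability $n^{-\omega}$. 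Combining these bounds yields $\pr(\D) \ge 1 - n^{-h(C)}$ with $h(C) = C^2/2 - 1 - o(1)$, which tends to infinity with $C$ in both models.

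For part~(b), I would follow the template of Section~\ref{s:sparseGraphs}, but replace its two explicit iterations by the operator machinery of Section~\ref{sec:FunOps}. Let $\W$ be a slight enlargement of $\D$, closed under the perturbations $\dv \mapsto \dv - \ea - \eb$ used in the proof, take $\cS' = \cD(\G(n,m))$, and define $\cS$ on the same underlying set by $\pr_\cS(\dv) \propto H(\dv) := \pr_{\cB_m}(\dv)\widetilde H(\dv)$ with $\widetilde H(\dv) = \exp(1/4 - \gtwo^2/(4\mu^2(1-\mu)^2))$. Part~(a) and its $\cB_m$-analogue via \thref{l:sigmaConc}, together with the sharp concentration of $\gtwo$ near $\mu(1-\mu)$, give $\pr_{\cS'}(\W), \pr_\cS(\W) \ge 1 - n^{-\omega}$ and $\ex_{\cB_m}\widetilde H = 1 + o(1)$. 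On the auxiliary graph $G$ with vertex set $\W$ and edges $\{\dv-\ea,\dv-\eb\}$, the hypothesis of \thref{l:lemmaX} reduces to a bound $R_{ab}(\dv) = e^{O(\delta)} H(\dv-\ea)/H(\dv-\eb)$ for some small $\delta$, with the right-hand side already given by~\eqn{conjRatio}. To estimate $R_{ab}$, I would construct an ansatz $\pv^*\colon \oA \times \Z^n \to [0,1]$ of the shape
\[
\pv^*_{av}(\dv) = \frac{d_a d_v}{M_1}\left(1 - \frac{(d_a-d)(d_v-d)}{d(n-1-d)}\right) + \text{higher-order corrections},
\]
and iterate \thref{c:contraction}: each application shrinks the distance to the fixed point $P$ by a factor $O(\mu) = O(d/n)$ in the $\chi^{(s)}$-metric, at the cost of restricting to the smaller neighbourhood $\Omega^{(s+2k_0+2)}$. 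Once $P$ is pinned down, substituting into~\eqn{F2def} yields an explicit expression for $R_{ab}(\dv)$ which can be compared term by term against~\eqn{conjRatio}. \thref{l:lemmaX} with this $\delta$, diameter $r \le m$, and $\eps_0 = n^{-\omega}$ then produces the asymptotic in~(b).

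The main obstacle is the precision requirement. Because $r$ can be as large as $m = nd/2$, a pointwise error $\delta$ in the one-step ratio inflates to $m\delta$ in the final bound, so I need $\delta$ smaller than the target error divided by $m$, i.e.\ of order $1/(m\sqrt d)$ plus the other two terms divided by $m$. Each application of \thref{c:contraction} only buys a further factor $\mu = d/n$, so $\pv^*$ itself must already include enough correction terms to bring the residual within the contraction's attraction radius at the required precision. The authors' comment that this theorem is ``hand-checkable'' under the additional hypotheses (ii) and (iii) suggests that a bounded, explicit number of correction terms in $\pv^*$ suffices in this range, whereas the sharper \thref{t:graphwide} requires computer-algebra assistance to carry the expansion far enough to accommodate $\alpha$ up to $3/5$.
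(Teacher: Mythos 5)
Your part~(a) matches the paper's argument (Lemma~\ref{l:sigmaConc}(i) with $\alpha=C\sqrt{d\log n}$ plus a union bound, and Lemma~\ref{l:sigmaConc}(ii) for the variance condition), and your overall architecture for part~(b) --- approximate fixed point $\Pgr$, contraction via \thref{c:contraction}, comparison of $R_{ab}$ with $H(\dv-\ea)/H(\dv-\eb)$ from~\eqn{conjRatio}, then \thref{l:lemmaX} --- is the paper's route. But there is a genuine gap in your error accounting, and it is exactly the point where hypothesis~(iii) earns its keep. You take the diameter of the auxiliary graph to be $r\le m=nd/2$ and then correctly observe that you would need the one-step ratio error $\delta$ to be of order $1/(m\sqrt d)\asymp 1/(nd^{3/2})$. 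That precision is not attainable here: the achievable $\delta$ unavoidably contains terms of order $1/(dn)$ (e.g.\ from the $O(1/\mu n^2)$ shifts in $\mu$ and $\sigma^2$ between adjacent sequences, and the truncation/expansion errors in Lemma~\ref{l:maple}), and even the purely algebraic comparison of $\Rtil_{ab}$ with the conjectured ratio carries an error $O(\sqrt{d\log n}/n^2)$ as in~\eqn{targetRHS}; multiplying either of these by $m$ gives $\Omega(1)$ (indeed $m\cdot\sqrt{d\log n}/n^2 = d^{3/2}\sqrt{\log n}/2n$, which is not $o(1)$ for $d$ near $n/(\log n)^{3/4}$). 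So with $r\le m$ your plan only yields $\Pr_{\cS'}(\dv)=e^{O(1)}H(\dv)$, not the $(1+o(1))$ statement, and piling ``higher-order corrections'' into $\pv^*$ cannot rescue it --- even the computer-assisted expansion of \thref{t:graphwide} only reaches $\delta=O(\mu\eps^4+1/n^2)$, which with $\eps\asymp\sqrt{\log n/d}$ is $\asymp\log^2 n/(dn)$, still far above $1/(nd^{3/2})$.

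The missing idea is the paper's diameter bound: condition~(iii) gives $\sum_i|d_i-d|\le n\sqrt{2d}$ by Cauchy--Schwarz, so every sequence in $\D$ is within $L^1$-distance $O(n\sqrt d)$ of the constant sequence, and the graph $G$ (whose edges change a sequence by $\ea-\eb$, i.e.\ $L^1$-distance $2$, along paths that can be kept inside $\D$ by always moving mass from an above-average to a below-average coordinate) has diameter $r=O(n\sqrt d)$, not $O(nd)$. With this $r$ and $\delta = 1/dn + (d\log n)^{3/2}/n^3 + \sqrt{d\log n}/n^2$ one gets $r\delta = O\bigl(1/\sqrt d + d\sqrt{\log n}/n + d^2(\log n)^{3/2}/n^2\bigr)$, which is precisely the error in~(b); this is also why the explicit $\Pgr,\Rgr$ of~\eqref{Pgrdef} suffice without further correction terms. (Two smaller omissions in your sketch, fixable but needed for rigour: you must verify that $P$ itself is an approximate fixed point of $\Cc$ --- via Lemmas~\ref{trick17}, \ref{l:simpleSwitching}, \ref{l:graphical}, \ref{2Path} with $k_0=4\log n$ on a suitable neighbourhood $\Omega^{(s)}$, giving the $2^{k_0}\mu^{k_0}$ truncation error --- and also establish the crude initial proximity $\chi^{(J)}(P,\Ptil)=O(\mu)$ before the contraction iterations can be compared; iterating from $\pv^*$ alone does not ``pin down'' $P$.)
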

\noindent We note that the constant implicit in $O()$ in (b) of course can, and in fact does, depend on $C$. 
\begin{proof} 
Let $\Omega$ be the underlying set of $\cB_m(n)$ and let 
$\dv\in \Omega$. We will consider $\dv$ chosen either according to $\cD(\G(n,m))$ or $\cB_m(n)$. 
By definition, $M_1(\dv) = 2m$ for all $\dv\in \Omega$. 
Apply \thref{l:sigmaConc}(i) with $\alpha=C\sqrt{d \log n}$ and the union bound to see that  
 in both $\cD(\G(n,m))$ and $\cB_m(n)$,
 with probability at least $1-n^{-f(C)}$, for all $i\in [n]$ we have 
$\size{d_i-d} \leq C\sqrt{d\log n}$ where we may take $f(C) = C^2/3-1$. 
Now, apply \thref{l:sigmaConc}(ii) with $\alpha=1/2$ and note that $  d =\mu_1 (n-1)>\log^2 n$  by the theorem's hypothesis to obtain that $\sigma^2(\dv) \leq 2d$ with probability at least $1-n^{-\omega}$. 
Therefore, $\dv$ satisfies~(ii) and~(iii) with probability $1-n^{-h(C)}$, 
for some function $h(C)\to\infty$ as $C\to\infty$, and $\dv$ satisfies 
(i) always. Hence (a) follows.

For (b) we first consider the ratio for adjacent degree sequences (see Step 1 in the template given at the start of Section~\ref{s:sparseGraphs}).
Let  $Q_1^1$ be the set of sequences $\dv\in\Z^n$ 
such that $\dv-\ea\in \D$ for some $a\in[n]$. 
Recall that $P_{av}(\dv)$ denotes the probability that the edge $av$ is present in a graph 
 $G\in\cG(\dv)$ 
 and that 
$$R_{ab}(\dv)= \frac{\Pr_{\cS'}(\dv-\ea)}{\Pr_{\cS'}(\dv-\eb)},$$
where $\cS' = \cD(\G(n,m))$. 
We now present functions $\Pgr$, $\Rgr$ and $\Ygr$ that approximate the probability and ratio functions $P$, $R$ and $Y$ sufficiently well. 
For $a,v,b\in [n]$ set 
\begin{align}
\Pgr_{av}(\dv) &= \frac{d_ad_v}{d(n-1)}\left(1-\frac{(d_a-d)(d_v-d)}{d(n-1-d)}\right), \lab{Pgrdef}\\
\Rgr_{ab}(\dv) &= \frac{d_a(n-d_b)}{d_b(n-d_a)}
	\left(1+ \frac{(d_a-d_b)}{d^2n}\sigma^2(\dv)\right), \lab{Rgrdef} \\
\Ygr_{avb}(\dv) 
	&=P^{\mathrm{gr}}_{av}(\dv ) P^{\mathrm{gr}}_{bv}(\dv -\ea-\ev)(1+1/n). \lab{Ygrdef}
\end{align}
 
\begin{claim}\thlab{RDense}
Uniformly for $\dv=\dv(n) \in \D$ and $a\ne v\in  [n]$ 
\bel{Peq}
P_{av}(\dv)=\Pgr_{av}(\dv)\big(1+O(\eta_1+\eta_2 )\big), 
\ee
and uniformly for all $\dv\in Q_1^1$ 
and for all  $a, b\in  [n]$ 
\bel{Req}
 R_{ab}(\dv)=\Rgr_{ab} (\dv)\big(1+O(\eta_1+ \mu \eta_2  )\big),
\ee
where 
 $\eta_1 = 1/m + \eps \mu_1 /n$
 and 
 $\eta_2 = \eps/n +  \eps^3\mu_1^2$, 
with $\eps = C\sqrt{(\log n)/d}$, where $d=2m/n$. 
\end{claim}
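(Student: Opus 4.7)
The plan is to identify $\Ptil$ and $\Rtil$ as approximate fixed points of the operators $\cP$ and $\cR$ from Section~\ref{sec:FunOps}, and then to close the gap between the true probability functions $P,R$ and their approximations by invoking the contraction property of $\cC=\cP(\cdot,\cR(\cdot))$ established in Corollary~\ref{c:contraction}. The key observation is that Propositions~\ref{l:recurse} and~\ref{2Path} imply that the true probability function $P$ essentially satisfies $\cC(P)=P$ (up to negligible truncation in $\Sigma^{k_0}$), while $\Rtil$ is designed so that $\Rtil\approx\cR(\Ptil)$.

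The first step is a routine verification that for all $\dv$ in a bounded $L^1$-neighbourhood of any element of $\D$ one has $\Ptil_{av}(\dv)=O(\mu)$, so that $\Ptil\in\Pi_\mu$ and the operator framework applies; this uses (ii) which gives $d_i=d(1+O(\eps))$. The second and main computational step is to evaluate $\cC(\Ptil)$ explicitly and show that $\cC(\Ptil)=\Ptil\bigl(1+O(\eta_1+\eta_2)\bigr)$. Choosing a small fixed $k_0$ (say $k_0=3$) makes truncation errors in $\Sigma^{k_0}$ of order $\mu^{k_0}$, which will be absorbed into $\eta_1+\eta_2$. One then plugs $\Ptil$ into \eqref{def:2path} and \eqref{def:bad}, uses the moment identities $\sum_v d_v=dn$, $\sum_v(d_v-d)^2=n\sigma^2$, and Taylor-expands to second order to compute $\bad(a,b,\dv)(\Ptil)$; the leading $d_b/n$ term combines with the $\sigma^2$-correction in $\Ptil$ to produce exactly the denominators in $\Rtil$. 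Similarly, plugging $\Rtil$ into \eqref{F1def} and expanding $(1-\Ptil_{bv}(\dv-\eb-\ev))/(1-\Ptil_{av}(\dv-\ea-\ev))$ in powers of $\mu$, together with the identity $\sum_b d_b\Rtil_{ba}(\dv-\ev)=d(n-1)(1+\text{corr.})$, reproduces $\Ptil_{av}$ up to the claimed error.

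For the contraction step, apply Corollary~\ref{c:contraction} with $\pv=\Ptil$ and $\pv'=P$, where $\Omega^{(0)}$ is taken to be a sufficiently thick $L^1$-neighbourhood of $\D$ (wide enough to accommodate many iterations). Writing $\xi_s=\chi^{(s)}(\Ptil,P)$, the approximate-fixed-point estimate of Step 2 together with $\cC(P)=P$ and the triangle inequality for $\chi^{(s)}$ yield
\begin{equation*}
\xi_{s+2k_0+2}\le O(\eta_1+\eta_2)+O(\mu)\,\xi_s.
\end{equation*}
Iterating this geometric recurrence $O(\log n)$ times (which remains within the allowed neighbourhood by the choice of $\Omega^{(0)}$) drives the second term below the first, giving $\xi_s=O(\eta_1+\eta_2)$ for the relevant range of $s$, which after exponentiation is \eqref{Peq}. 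Identity \eqref{Req} then follows by applying $\cR$ to the now-controlled $\Ptil$ via Lemma~\ref{l:errorImplication}(a), gaining the extra factor $\mu$ on $\eta_2$ promised by the statement.

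The main obstacle is the second step: one must expand $\cC(\Ptil)$ to sufficiently many orders in $\mu$ and $\eps$ so that the residual error is $O(\eta_1+\eta_2)$ rather than $O(\eps/\sqrt n)$ or some cruder bound. In particular, the cancellation between the $\sigma^2$-correction in $\Rtil$ and the contribution of the path-probabilities $\Ptil_{avb}$ to $\bad$ is delicate; the geometric damping in $\Sigma^{k_0}$ helps control higher-order terms but the leading corrections must be computed exactly. Once that calibration is done, the contraction argument runs cleanly and gives both parts of the claim.
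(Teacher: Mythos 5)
Your overall architecture is the same as the paper's: treat $P$ as an (approximate) fixed point of $\cC=\Pc(\cdot,\Rc(\cdot))$, show by direct expansion that $\Ptil$ is an approximate fixed point with defect $O(\eta_1+\eta_2)$ (this is exactly Lemma~\ref{l:maple}), run the contraction of Corollary~\ref{c:contraction} to pull $P$ and $\Ptil$ together, and finish with Lemma~\ref{l:errorImplication}(a) to get the extra factor $\mu$ in~\eqref{Req}. However, your choice of a \emph{bounded} $k_0$ (you propose $k_0=3$) makes a step fail in precisely the range the claim is needed for. The truncation of $\Sigma^{k_0}$ enters twice: it gives the defect $O((2\mu)^{k_0})$ in the approximate identity $\cC(P)\approx P$ coming from Lemma~\ref{2Path}, and it appears in the evaluation of $\Rc(\Ptil)$ (compare the term $(2d/n)^{k_0}$ in $\eta_1$ in Lemma~\ref{l:maple}). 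For fixed $k_0$ this error is $\Theta(\mu^{k_0})=\Theta((d/n)^{k_0})$, which is \emph{not} $O(\eta_1+\eta_2)$ once $d\gg\sqrt n$: for instance with $d=n/\log^2 n$ one has $\eta_1+\eta_2=O(n^{-3/2+o(1)})$ while $\mu^{k_0}$ is only polylogarithmically small for any constant $k_0$, so no fixed $k_0$ can be ``absorbed''. Consequently your geometric recurrence $\xi_{s+2k_0+2}\le O(\eta_1+\eta_2)+O(\mu)\xi_s$ has a forcing term that is really $O(\eta_1+\eta_2+(2\mu)^{k_0})$, and with constant $k_0$ it only yields $\xi_s=O(\mu^{k_0})$, far weaker than~\eqref{Peq}. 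The paper takes $k_0=4\log n$, so that $(2\mu)^{k_0}\le n^{-4}=o(1/dn)$ (using $2\mu<1/e$); some choice of $k_0$ growing with $n$ is unavoidable here, and then the working neighbourhood $\Omega^{(0)}$ must be taken $\Theta(\log^2 n)$-thick, since each application of $\cC$ costs $2k_0+2$ in the domain index and you iterate $\Theta(\log n)$ times (one also has to note that $\eps$ and $\sigma^2$ change only negligibly over such $L^1$-distances).

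A second, more routine but genuine omission: your Step~1 only verifies $\Ptil\in\Pi_\mu$, but the contraction argument compares two members of $\Pi_\mu$, so you also need the $P$-side preliminaries. Concretely, you must show that all even sequences $\dv'$ in the thick neighbourhood of $\D$ are graphical (Lemma~\ref{l:graphical}(a), using the lower bound on $\mu$ and the degree spread), that $P_{av}(\dv')=O(\mu)<1/2$ there (Lemma~\ref{l:simpleSwitching}), and that $0<\Num_{av}(\dv')<\Num(\dv')$ (via Lemma~\ref{trick17}), since these are the hypotheses under which Proposition~\ref{l:recurse} and Lemma~\ref{2Path} make $P$ an approximate fixed point at all. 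The same bounds supply the needed seed for the contraction, namely a finite initial distance $\chi(P,\Ptil)=O(1)$ (the paper gets $P_{av}=\Ptil_{av}(1\pm1)$ from $P_{av}<3\mu/2$ and $\Ptil_{av}\sim\mu$); without such a starting bound Corollary~\ref{c:contraction} cannot be invoked with $\pv'=P$. These points are fixable with the paper's toolkit, but as written your proposal has no valid starting point for the iteration and, because of the constant $k_0$, cannot reach the stated error $O(\eta_1+\eta_2)$ in the dense part of the range.
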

The proof will show that an analogous statement also holds for $Y_{avb}$. Note that $\eps$ is simply the upper bound on  the relative degree spread of a sequence $\dv$ in $\D$ implied by (ii).

\begin{proof}[Proof of \thref{RDense}] 

To show that $P$ and $\Pgr$ (and $R$ and $\Rgr$) are $(\eta_1+\eta_2)$-close in the sense of \eqref{Peq} and \eqref{Req}, we consider the operator $\cC$ as defined for \thref{c:contraction}. We first observe that $\cC$ fixes $(P,Y)$, where in this context  we regard $P$ to be the function $\pv$ with $\pv_{ av}=P_{av} $ for all appropriate $a$ and $v$, and similarly $Y$  to be   $\yv$ with $\yv_{avb}= Y_{avb}$, by \thref{l:recurse}. We will next use the contraction property of $\cC$ as expressed in \thref{c:contraction} to show that for any integer $k>0$, $\cC^{k}(\Pgr,\Ygr)$ and $\cC^{k}(P,Y)$ are $O(\mu)^{k }$-close. We will also show that $\Pgr$ and $\cC^{k}(\Pgr)$ are $(\eta_1+\eta_2)$-close. These observations will then be shown to imply \thref{RDense}.

Fix $k_0 = 4\log n$ and $r=4k_0+4=O(\log n)$.   
Let $\Omega^{(0)}$ be the set of sequences $\dv  \in \Z^n$ that are at $L_1$ distance at most $r$ from a sequence in $\D$.  Let $\mu_0=5\mu_1$, and  define $\Omega^{(s)}$ as in \thref{c:contraction}  to be the set of sequences $\dv\in\Omega^{(0)}$ of $L_1$ distance at least $s+1$ from all sequences outside   $\Omega^{(0)}$. 

Towards \thref{c:contraction} we first establish that $(P,Y)$ and $(\Pgr,\Ygr)$ are elements of $\Pi_{\mu_0}(\Omega^{(2)})$ (see \thref{def:Pi-set}).
 Note that for $\dv\in \Omega^{(0)}$, the values of $d(\dv)$ and $\mu(\dv)$ are asymptotically equal to $2m/n$ and $\mu_1$, respectively, since $M_1(\dv)=M_1(\dv_0)+O( \log n)$ for some sequence $\dv_0\in \D$. Thus, $\mu$ ($=\mu(\dv)$) and $\mu_1$ are interchangeable in the error terms below, as are  $1/dn$ (where $d=d(\dv)$) and $1/m$. 
 Furthermore, we note that condition~(ii) of the theorem, together with the lower bound on $\mu_1$ in the theorem statement with say  $K=2$, imply that for all even $\dv \in \Omega^{(0)}$, $d_i\sim \mu n$ uniformly for all~$i$. 
To bound $P_{av}(\dv)$, we first observe, using $\mu\geq  (\log n)^3/n$ and assumption (ii) of the theorem, that Lemma~\ref{l:graphical}\eqref{l:graphicali} implies
$\Num({\bf d})>0$ for all even  $\dv \in \Omega^{(0)}$.
After this, for $n$ sufficiently large,~\thref{l:simpleSwitching}, together with 
 the fact that $d_i\sim \mu n$ uniformly for all $i$,
implies that for all distinct $a,v\in [n]$ 
\bel{Pbound2}
P_{av}(\dv)\leq\frac{\mu}{1-\mu}(1+o(1)) <  \frac{5\mu}{4} 
\quad  \mbox{for all even $\dv\in \Omega^{(0)}$},
\ee 
where for the last inequality we use that $\mu\sim\mu_1 < 1/6$, say.  
Since $\Omega^{(2)}\se \Omega^{(0)}$ and $5\mu/4 <\mu_0$ this establishes requirement $(\Pi a)$ for $P$ in the definition of $\Pi_{\mu_0}(\Omega^{(2)})$. 
Now restrict slightly to $\dv \in \Omega^{(2)}$. 
 By definition $Y_{avb}(\dv)$ is the probability that both edges $av$ and $bv$ are present.
Hence  \thref{l:recurse} (c)implies (with the above bounds on $P_{av}(\dv)$ applying for all $\dv\in \Omega^{(0)}$) that $0\le Y_{avb}(\dv) =Y_{bva}(\dv) \le 3\mu_1 P_{bv}(\dv)/2$ 
  (easily) assuming, as we may, that $\mu_1$ is sufficiently small. Thus $(P,Y)$ 
 satisfies condition $(\Pi c)$ for membership of $\Pi_{\mu_0}(\Omega^{(2)})$, and also
 $$
 \sum_{v\in [n]\setminus\{a,b\}} Y_{avb}(\dv) \le  \sum_{v\in [n]\setminus\{a,b\}}\frac{3P_{bv}(d)\mu_1}{2}\le 3 \mu_1d_a(1+o(1)) 
 $$
using $P_{bv}(\dv)\le 2\mu_1\sim 2d_a/n$
 which follows from \eqref{Pbound2} and $\mu\sim\mu_1$.  As  $4\mu_1 < \mu_0$, this shows $Y$ satisfies condition $(\Pi b)$ for membership of $\Pi_{\mu_0}(\Omega^{(2)})$ when $n$ is sufficiently large. 
To see that $(\Pgr,\Ygr)$ is also in $\Pi_{\mu_0}(\Omega^{(2)})$  we recall that $d_i\sim \mu n$ uniformly for all $i$ for all $\dv \in \Omega^{(0)}$.  Thus, by definition~\eqn{Pgrdef} we have $\Pgr_{av}(\dv)\sim \mu$ for all distinct $a,v\in [n]$ and all $\dv\in \Omega^{(0)}$. 
Properties $(\Pi a)$-$(\Pi c)$ follow directly from this fact and the definition~\eqref{Ygrdef} since $\mu\sim \mu_1 <\mu_0/5$. 

 Now for large $n$ and distinct $a,v\in [n]$ we have $P_{av}(\dv)=\Pgr_{av}(\dv)(1\pm1)$ for all even $\dv\in \Omega^{(0)}$ since $\Pgr_{av}(\dv)\sim \mu$ and by~\eqref{Pbound2}.  Also,  $0\le Y_{avb}(\dv)\le 3\mu_1P_{bv}(\dv)/2$ implies  $Y_{avb}(\dv)=\Ygr_{avb}(\dv)(1\pm1)$ even $\dv\in \Omega^{(2)}$.  
We may now apply \thref{l:errorImplication}(a) with $\xi=1$ for any odd $\dv \in \Omega^{(3)}$  to deduce that
$$
\Rc(P ,Y ) _{ab}(\dv)= \Rc(\Pgr ,\Ygr )_{ab}(\dv)(1+O (\mu_0  ))
$$
for all $a,b\in [n]$. 
Writing $\rv$ for $\Rc(P ,Y )$ and $\rv'$ for $\Rc(\Pgr ,\Ygr )$ we obtain from this 
and \thref{l:errorImplication}(b) that 
$$
\Pc(P ,\rv ) _{av}(\dv)= \Pc(\Pgr ,\rv' )_{av}(\dv)(1+O (\mu_0  ))
$$ 
for all even   $\dv \in \Omega^{(4)}$, all distinct $a,v\in [n]$.  
Next applying \thref{l:errorImplication}(c) in the same way to even $\dv \in \Omega^{(6)}$ gives 
$$
\Yc(\hat\pv ,Y) _{avb}(\dv)= \Yc(\hat\pv' , \Ygr )_{avb}(\dv)(1+O (\mu_0  ))
$$ 
for all even $\dv \in \Omega^{(6)}$, all distinct $a,v,b\in [n]$,
where $\hat\pv = \Pc(P ,\rv )$ and $\hat\pv' = \Pc(\Pgr ,\rv' )$. 
Recalling the definition of the distance functions $\chi^{(s)}$ and the definition of the operator $\cC$ we see that this is equivalent to the statement 
$ \chi^{(6)}\big( \cC(P ,Y ),\cC(\Pgr ,\Ygr )\big) =O(\mu_0)$. 
Making $k-1$ iterated applications of~\thref{c:contraction} with ever-decreasing $\xi$ produces
$$
\chi^{(4k+2)}\big(\cC^{k_0 }(P ,Y ),\cC^{k_0 }(\Pgr ,\Ygr )\big) =  O(\mu_0)^{k}.
$$
Finally, 
 $\Cc(\Pgr ,\Ygr ) (\dv) $ can be estimated by straightforward expansions  using the following. Uniformly for all $\dv \in \Omega^{(0)}$:
\begin{enumerate}[(a)] 
\item $\Rc(P^{\mathrm{gr}},Y^{\mathrm{gr}})_{ab}(\dv)= R^{\mathrm{gr}}_{ab} (\dv)\left(1 + O\left(\eta_1\right)\right)$
	for   all $a,b \in  [n]$;  
\item $\Pc(\Pgr,\Rgr)_{av}(\dv) = \Pgr_{av}(\dv)\left(1 + O\left(\eta_2\right)\right)$ 
	for   all  distinct  $a,v \in  [n]$; 
\item $\Yc(\Pgr,\Ygr)_{avb}(\dv) = \Ygr_{avb}(\dv)\left(1 + O\left(\eta_2\right)\right)$ 
	for  all  distinct $a,v,b \in  [n]$. 
\end{enumerate}
This is justified by \thref{l:hand} in Appendix \ref{s:technical}, after making two observations. One is that  the error term  $\eps^4 d^2/n^2$ can be dropped because $\eps^3 d =o(1)$.  The second is that the error terms   $\eta_1$ and $\eta_2$ are now defined with reference to sequences which are at distance $O(\log n)$ from the sequences in $\D$ and are thus  asymptotically the same as the stated values. (In fact, a better approximation is proved in \thref{l:mapleHigher} using computer assistance.) 
Applying  (a), (b) and (c) in turn,   recalling Lemma~\ref{l:errorImplication} to handle the small error terms, shows that  $\chi^{(6)}(\Cc(\Pgr,\Ygr),(\Pgr,\Ygr)) = O(\eta_1+ \eta_2)$. 
 Using \thref{c:contraction} repeatedly, and bounding the total distance moved during the iterations as for a contraction mapping  (as the sum of a geometric series), this gives
$$
\chi^{(r-2)}\big((\Pgr ,\Ygr ),\cC^{k_0 }(\Pgr ,\Ygr )\big) = O(\eta_1+ \eta_2). 
$$
Combining this with above bound on $\chi^{(4k+2)}$ when $k=k_0$, and  with   $\cC(P,Y)=(P,Y)$ and the triangle inequality, gives
$
\chi^{(r-2)}\big( (P ,Y ), (\Pgr ,\Ygr )\big) =  O( \eta_1+ \eta_2)+ O(\mu_0)^{k_0}. 
$
This implies~\eqn{Peq} for all even $\dv\in \Omega^{(r-2)}$ since
$k_0 = 4 \log n$, and  we may assume  $ O(\mu_0)<1/e$ say since $\mu_0=5\mu_1\to 0$. 
Note that $\D\se  \Omega^{(r)}\subseteq \Omega^{(r-2)}$ by definition.  
 For~\eqn{Req}, we now use that~\eqn{Peq} holds for all even $\dv\in \Omega^{(r-2)}$ to deduce from \thref{l:errorImplication}(a) that 
$\Rc(P,Y)_{ab}(\dv) = \Rc(\Pgr ,\Ygr )_{ab}(\dv) (1+O(\mu_0\eta_1+\mu_0\eta_2))$ for all odd $\dv\in \Omega^{(r-1)}$. 
This, together with (a) above and the facts that $\Rc(P,Y)=R$ and $\mu_0=O(\mu)$, implies~\eqn{Req} for all $\dv\in Q_1^1 \subseteq   \Omega^{(r-1)}$.
\end{proof}

 Since $\eps = C\sqrt{(\log n) / d}$  and $d<n/\sqrt {\log n}$, the claim  gives
\bel{targetLHS}
\frac{\Pr_{\cS'}(\dv-\ea)}{\Pr_{\cS'}(\dv-\eb)} = 
R_{ab}(\dv)= 
\Rgr_{ab}(\dv)
\left(1+O\left(  \frac{1}{dn} + \frac{ (\sqrt{d  \log  n})^3}{n^3} 
 	+\frac{ \sqrt{d  \log  n}}{n^2}\right)\right) 
\ee
uniformly for all $\dv\in Q_1^1$.

We now move to Steps 2 and 3 in the template in Section 4. Let 
$
H(\dv)=\pr_{\cB_m}(\dv) \widetilde{H}(\dv)
$
be the conjectured formula in the right hand side of (b) (without error terms), 
where 
$$ \widetilde{H}(\dv)= \exp\Big( {1\over 4} - {\gtwo^2\over 4\mu^2(1-\mu)^2} \Big).  
$$
Define the probability spaces $\cS$ and $\cS'$ exactly as in the proof of Theorem~\ref{t:sparseCase} with the same underlying set $\Omega$.  
That is, 
$$\pr_{\cS}(\dv)=H(\dv)/\sum_{\dv'\in \Omega}H(\dv')=\frac{H(\dv)}{\ex_{\cB_m}\widetilde{H}}$$ and $\cS'=\cD({\G(n,m)})$. 
 Also define the  graph $G$  as before, with vertex set $\W: = \D$, and with an edge joining each two   sequences in $\D$ of the form $\dv-\ea$ and $\dv-\eb$ for some $a\ne b$. 
The $L_1$ distance from a sequence $\dv\in G$ to the constant sequence $(d,\ldots,d)$ 
is $\sum_i \size{d_i-d}$, which is at most $n\sqrt{2d}$ by (iii) and Cauchy's Inequality. 
Some vertex of $G$ has $L_1$ distance at most $n$ from this constant sequence. It follows that the diameter of $G$ is $r=O(n\sqrt d)$.

We claim (see Step 4 of the template) that $\W$ has probability at least $1-\eps_0$ for some suitably chosen $\eps_0$ in both $\cS$ and $\cS'$. 
Note that $\Pr_{\cS'}(\W) = 1-n^{-h(C)}$ and $\Pr_{\cB_m}(\W) = 1-n^{-h(C)}$ by (a) proved above. 
Furthermore, if $\dv\in \cB_m(n)$ then $\gtwo(\dv) = \frac{n}{(n-1)^2}\sigma^2(\dv) = \mu(1-\mu)(1+O(\xi))$ with probability 
$1-o(n^{-\omega })$, where $\xi=\log^2 n/\sqrt n$ (this is the more precise implication of \thref{l:sigmaConc}(ii) 
applied with $\alpha = \log^2n/\sqrt{n}$).
Thus, for such $\dv$ in $\cB_m(n)$, the exponential factor $\widetilde{H}(\dv)$ is $1+O(\xi)$ with probability $1-o(n^{-\omega })$. Therefore,
\bel{exHtilde}
\ex_{\cB_m}\widetilde{H} =1+O(\xi)
\ee 
  and thus $\pr_{\cS}(\W) = 1-n^{-h(C)}$. 
It follows that, as in the proof of \thref{t:sparseCase}, we may use $\eps_0=n^{-1}$ in \thref{l:lemmaX}. 

We now move to Step 5 in the template.
For  $\dv\in Q_1^1$,
\begin{align}\lab{eq:ratioDense}
\frac{\Pr_{\cS}(\dv-\ea)}{\Pr_{\cS}(\dv-\eb)}
&= \frac{H(\dv-\ea)}{H(\dv-\eb)}
=   \frac{d_a(n- d_b)}{d_b(n -d_a)}\exp\left(\frac{ (d_a-d_b)   \gamma_2  }{d^2 (1-\mu')^2 } +O\left({{\Delta}^2\over (dn)^2}\right) \right)  
\end{align}
by~\eqref{conjRatio}, where $\mu'=\mu(\dv-\ea)$ and $\gtwo$, $d$ and $\Delta$ are defined with respect to $\dv$ as in~\eqref{conjRatio}.
Note that 
the first term in the exponential in~\eqn{eq:ratioDense} is 
$O\left( \sqrt{ \log n/dn^2}\right)$ 
 for $\dv\in Q_1^1$ by (ii) and (iii) and since $\mu'= \mu(1+1/2m)$. 
 Thus, 
 $$ 
 \frac{ (d_a-d_b)   \gamma_2  }{ d^2 (1-\mu')^2 } 
=\frac{ (d_a-d_b)}{ (dn)^2} \sum_i(d_i-d)^2 + O\left(\frac{\sqrt{d\log n}}{n^2}\right),
$$
and the second term, $\Delta^2/(dn)^2$, is $O(1/n^2)$ by assumption (ii). We can now infer from the definition of $\Rgr$ that~\eqref{eq:ratioDense} is equivalent to 
\bel{targetRHS}
\frac{\Pr_{\cS}(\dv-\ea)}{\Pr_{\cS}(\dv-\eb)}
= \Rgr_{ab}(\dv)  \left(1+ O\left(\frac{\sqrt{d\log n}}{n^2}\right)\right). 
\ee
This together with
\eqref{targetLHS} gives
$$
\frac{\Pr_{\cS'}(\dv-\ea)}{\Pr_{\cS'}(\dv-\eb)} =e^{O(\delta)}\frac{\Pr_{\cS}(\dv-\ea)}{\Pr_{\cS}(\dv-\eb)}
$$
with 
$$\delta =   \frac{1}{dn} + \frac{ (\sqrt{d  \log  n})^3}{n^3} 
 	+\frac{ \sqrt{d  \log  n}}{n^2}$$  
 for $\dv\in Q_1^1$.  
Therefore, by \thref{l:lemmaX} and~\eqn{exHtilde}, 
\begin{align*}
\Pr_{\cS'}(\dv) &= \Pr_{\cS}(\dv) e^{O\left( r\delta+\eps_0 \right)}\\
&= H(\dv) \left(1+O\left(\xi + r\delta+\eps_0\right)\right) 
\end{align*}
 for all $\dv\in \D$, 
which proves (b) since $\xi =  (\log^2n )/\sqrt{n}$, 
$r\delta = O\left(d^{-1/2}+ d\sqrt{\log n}/n+ d^2(\log n)^{3/2}/n^2\right)$ and $\eps_0=1/n$.
\end{proof}
It is a simple exercise in analysis to see that the theorem implies Conjecture~\ref{conj2} in the gap range: the truth of the theorem itself implies a slightly altered version of the theorem's statement (b), in which $C$  is a function  of $n$ that tends (``slowly'') to $\infty$. (The same can be done with the constant in the $O(\cdot)$ if desired.) The fact that $h(C)\to\infty$ then shows that the asymptotic approximation~\eqn{formula} holds for the sequences $\dv$ in a suitable set $R_p(n)$. All that remains is to note that the distribution of $m$ in $\cD(\Gnp)$ is identical to that in $\Epp$, and that the latter restricted $\sum d_i=2m$ is identical to $\cB_m(n)$.
\begin{cor} Conjecture~\ref{conj2}  holds.\thlab{cc}
\end{cor}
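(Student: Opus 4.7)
The plan is to combine Theorem~\ref{t:handCalc} with the existing enumeration results cited in the paper to cover the whole range $p(1-p)n^2 = \omega \log n$. As noted in the introduction,~\cite[Theorem~2.5]{degseq1} already establishes acceptability of $p$ when $\omega \log n/n^2 \le p(1-p) = o(n^{-1/2})$ or $p(1-p) \ge c/\log n$, so the only task is to handle the gap range in which $1/\sqrt{n} = o(p)$ and $p(1-p) = o(1/\log n)$.

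First I would replace the fixed constant $C$ in Theorem~\ref{t:handCalc} by a function $C(n)\to\infty$ growing slowly enough that the error bound in part~(b) remains $o(1)$; this is possible because (b) is quantitative in $C$ with error terms that are $o(1)$ uniformly over a neighbourhood of each fixed $C$, and because $h(C(n))\to\infty$. Define $R_p(n)$ to be the set of even integer sequences $\dv$ of length $n$ satisfying conditions (ii) and (iii) of Theorem~\ref{t:handCalc} with this slowly growing $C(n)$, for $m = \frac12\sum d_i$. Part~(a) of the theorem, applied for each $m$ in a neighbourhood of $p\binom{n}{2}$, gives $\pr(R_p(n)) = 1 - n^{-h(C(n))} = 1 - n^{-\omega}$ in both $\cD(\G(n,m))$ and $\cB_m(n)$; to pass from this to the same statement in $\cD(\Gnp)$ and $\Ep$, I would use standard concentration to restrict $m$ to a window where $d = 2m/n$ is well inside the gap range (so that the hypotheses of Theorem~\ref{t:handCalc} are met uniformly), and then integrate/sum the probability bounds over $m$. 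This verifies condition (ii) of acceptability.

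For condition (i), observe that by construction the marginal distribution of $\sum d_i$ is the same in $\cD(\Gnp)$ and in $\Epp$ (both being $2\cdot{\rm Bin}(\binom{n}{2}, p)$), and that conditioning on $\sum d_i = 2m$ turns $\Epp$ into $\cB_m(n)$ and turns $\cD(\Gnp)$ into $\cD(\G(n,m))$. Thus for $\dv \in R_p(n)$,
\[
\frac{\pr_{\cD(\Gnp)}(\dv)}{\pr_{\Epp}(\dv)} = \frac{\pr_{\cD(\G(n,m))}(\dv)}{\pr_{\cB_m}(\dv)},
\]
and Theorem~\ref{t:handCalc}(b) applied to this ratio gives exactly the factor $\exp\!\bigl(\frac{1}{4}-\frac{\gtwo^2}{4\mu^2(1-\mu)^2}\bigr)(1+o(1))$ required in~\eqref{formula1}. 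Together with the already-established sparse and dense cases, this completes Conjecture~\ref{conj2} on the whole range.

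The main potential obstacle is the bookkeeping in letting $C$ tend to infinity: one must check that the $O(\cdot)$ constant in Theorem~\ref{t:handCalc}(b), which depends on $C$, still yields an $o(1)$ relative error when $C = C(n) \to \infty$. This is handled by choosing $C(n)$ to grow sufficiently slowly compared to the constraints $\log^K n/n = O(\mu)$ and $\mu = o(1/\log^{3/4} n)$, and is the kind of elementary analysis referenced in the sentence preceding the corollary. Everything else is a routine translation between the four probability spaces involved.
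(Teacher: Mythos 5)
Your proposal is correct and follows essentially the same route as the paper: the paper's own argument is exactly to let the constant $C$ in \thref{t:handCalc} tend to infinity slowly (using $h(C)\to\infty$ to get the $1-n^{-\omega}$ bound on a suitable $R_p(n)$), combine with the known acceptable ranges from~\cite[Theorem~2.5]{degseq1}, and observe that the distribution of $m$ in $\cD(\Gnp)$ coincides with that in $\Epp$ while conditioning on $\sum d_i=2m$ reduces these to $\cD(\G(n,m))$ and $\cB_m(n)$. Your extra bookkeeping (restricting $m$ to a concentration window and summing over it to verify condition (ii) in $\Ep$ and $\cD(\Gnp)$) is precisely the ``simple exercise in analysis'' the paper leaves implicit.
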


We remark that one can avoid the sharp concentration results that we used, instead employing only variance via Chebyshev's inequality, at the expense of relaxing the  $o(n^{-\omega})$ error in the conjecture  to $o(1)$. The  result would  still be interesting; we leave the details to the reader.

  
  

\section{A wider range of degrees: proof of \thref{t:graphwide}} \lab{s:wider}

 In this section we prove Theorem~\ref{t:graphwide}. Compared with \thref{t:handCalc}, some crucial  differences that affect the argument  include   $\mu$ being permitted to have constant size, the allowable degree spread being  $d^\alpha$  for $\alpha>1/2$, and the transfer of  $\sigma^2$ from explicit bounds to a term in the ratio formula.

The proof has the same structure as for Theorem~\ref{t:handCalc}.  The crucial change required is to redefine the approximations, $\Pgr$, $\Rgr$   and $\Ygr$,  
 of the probability and the ratio functions so that the error functions corresponding to $\eta_1$ and $\eta_2$ in  \thref{RDense} satisfy $\eta_1+\mu \eta_2 = o(1/(nd^{\alpha}))$. This error bound is necessary to obtain a final   formula with $(1+o(1))$ error, since with the range of degrees under consideration, the diameter of the graph $G$ of degree sequences  (see the proof of~\thref{t:handCalc}) is  up to  $r= O(nd^{\alpha})$.

To define the approximations, we  write $\Pgr$, $\Rgr$  and $\Ygr$  parametrised to facilitate identifying negligible terms.  
We define the expressions 
\begin{eqnarray*}
\pi & = &\mu (1+\eps_a)(1+\eps_v)
		\left(1 + \frac{-\mu\eps_a\eps_v+(\eps_a +\eps_v )\sigma^2  /d n}{1-\mu}
		+  \frac{\eps_a +\eps_v}{n-1}\right),\\
\rho  
	& = &\frac{1+\eps_a}{1+\eps_b}\cdot\frac{1-  \mu(1 + \eps_b)+1/n}
	{1-  \mu(1 + \eps_a)+1/n}  \left(1 
	+\frac{(\eps_a-\eps_b) \sigma^2}
	{(1-\mu)^2 dn } 
	\right).
\end{eqnarray*}
When referring to $\pi$, we list only an initial segment of parameters $\eps_a,\eps_v,  \mu,\sigma^2,d$   that are different from the ones in the definitions above.  So for instance $\pi(x,y)$ stands for $\pi$ with  $\eps_a,\eps_v$ replaced by $x,y$. Similarly for $\rho$ and the parameters $\eps_a,\eps_b,  \mu,\sigma^2,d$.
Recall that we consider  sequences $\dv$ of length $n$ and $\mu =  d/(n-1)$.

For this section, we define  $\Pgr$ and $\Rgr$ by 
$$
 \Pgr_{av}=\pi, \quad \Rgr_{ab} = \rho 
$$
 for all $a,v,b\in [n]$, 
where $\eps_i=( d_i-d)/d$ and $\sigma^2 = \sigma^2 ( \dv) = \sum (d_i-d)^2/n$, 
so that $P_{av}^{\mathrm{gr}}$ etc.\ are functions of degree sequences.  Furthermore, for all $a,v,b\in [n]$ we define 
$$\Ygr_{avb}(\dv) = \pi(\eps_a,\eps_v) \pi(\eps_b,\eps_v-\delta)
	\cdot\left(1+\frac{1+\eps_a - \mu(1+\eps_a+\eps_b)}{(n-1)(1-\mu)}\right).$$

Note also that $d$ and $\mu$ were  specified in the theorem statement (determined by $m$), but with a slight notational abuse, for the following lemma, given any sequence $\dv$ of length $n$  we
define $\mu=\mu(\dv) = \frac12 M_1(\dv) /   {n\choose 2}$  so that the average is $d = \mu (n-1)$. 
 In the following lemma, the parity of $\dv$ is immaterial, though it will only be applied for odd $\dv$ in (a) and even $\dv$ in~(b) and~(c). 

\begin{lemma}\thlab{l:mapleHigher} 
Let $n$ be an integer and let $1/2\le \alpha < 3/5$. Let $\cA=\binom{[n]}{2}$ and let $\dv=\dv(n)$ be a sequence of length~$n$ with average $\bar d$ 
	such that $\mu_1=\mu(\dv) = \bar d/(n-1) < 1/4$, 
	and assume that  for  all $1\le i \le n$ we have 
	$|d_i-\bar d| \leq \eps \bar d$, 
	where $\eps = \bar d^{\,\alpha -1} > 0$. 
 Then 
\begin{itemize}
\item[(a)] $\Rc(\Pgr,\Ygr) _{ab}( \dv)=  \Rgr_{ab} ( \dv)
 \left(1 + O\left(\mu_1\eps^4\right)\right)$  for all $a,b \in [n]$,  
\item[(b)] 
$\Pc(\Pgr,\Rgr)_{av}(\dv) =  \Pgr_{av}(\dv)
 \left(1 + O(\mu_1\eps^4)\right)$  for all distinct $a,v \in [n]$,  
\item[(c)] 
$\Yc(\Pgr,\Ygr)_{avb}(\dv) =  \Ygr_{avb}(\dv)
 \left(1 + O(\mu_1\eps^4)\right)$ for all distinct $a,v,b \in [n]$. 
\end{itemize}
\end{lemma}
%
%
%
%
%
\begin{proof} 
In the calculations below the following approximations of $\Pgr$ and $\pi$, respectively, will often be convenient. 
Let $\dv'$ be a sequence of length $n$ that is at $L_1$ distance $O(1)$ from $\dv$, and with $d_a'=d_a-j_a$ and $d_v'=d_v-j_v$. 
Here and in the following, the bare symbols $\mu$, $d$, $\eps_a$ and so on are defined with respect to the original sequence $\dv$, whilst $\mu'$, $d'$, $\eps_a'$, etc., are defined with respect to the average degree of $\dv'$. For such a sequence $\dv'$ we have that $\mu(\dv')$ is $\mu'= \mu_1+O(1/ n^2)$ since the average $d'$ of $\dv'$ is $\bar d + O(1/n)$. 
Therefore, the variable $\eps_a'$ defined as $(d_a'-d')/d'$ is equal to 
$\eps_a-j_a\delta+O(\mu\delta^2)$, where $\delta = 1/\bar d$, and the analogous equation holds for $\eps_v$. 
Similarly, $\sigma' = \sigma^2(\dv') = \sigma^2(\dv) + O(\eps\mu_1).$
Thus 
\begin{align}\lab{aux710}
P^{\mathrm{gr}}_{av}(\dv') 
&= \pi(\eps_a',\eps_v',\mu',\sigma',d')
=\pi(\eps_a-j_a\delta,\eps_v-j_v\delta)(1+O(\xi)),		
\end{align}
where here and below $\xi =  \mu_1\eps^4$ (note that $\eps\ge \bar d^{-1/2}$ by assumption so that $1/\bar dn =O(\mu_1\eps^4)$). 
In other words, the changes from $\mu_1$, $\bar d$, and $\sigma^2$ to 
$\mu'$, $d'$, and $\sigma'$ are negligible in the formula for $\Pgr$.

For (a), we note first that $\Rc( \Pgr,\Ygr)_{aa}(\dv) = 1 = \Rgr_{aa}(\dv)$ by definition of $\rho$ and $\Rc$ in~\eqref{F2def}. Assume now that $a\neq b$. Using~\eqn{F2def} to evaluate $\Rc( \Pgr,\Ygr)_{ab}(\dv)$, we estimate the expression 
$\bad(a,b,\dv-\eb)=\bad(\Pgr,\Ygr)(a,b,\dv-\eb)$ for which, in turn, we need to estimate 
$\sum \Ygr_{avb}(\dv-\eb)$, where the sum is over all $v\in  [n]$  such that both $av$ and $bv$ are allowable (see \eqref{def:bad}). 
By definition and~\eqref{aux710}, 
$$\Ygr_{avb}(\dv-\eb) = \pi(\eps_a,\eps_v) \pi(\eps_b-\delta,\eps_v-\delta)
	\cdot\left(1+\frac{1+\eps_a - \mu_1(1+\eps_a+\eps_b)}{(n-1)(1-\mu_1)} +O(\xi) \right),$$
where we use $\eps_a$, $\eps_b$ and $\mu_1$ in the third factor (rather than the altered versions $\eps_a'$ etc.) using the same reasoning as in the lead-up to~\eqref{aux710}.
Consider expanding this expression for $\Ygr_{avb}(\dv-\eb)$ ignoring terms of  order  $\eps^4$, and hence also ignoring  $\delta^2$ and $\eps^2\delta$, since $\eps^2\ge 1/\bar d$. A convenient way to do this is to make substitutions $\epsA_v=y_1\epsA_v$,   $ \delta  = y_1^2 \delta $, $\mu_1 = y_2 \mu_1$, $1/n =y_1^2y_2/n$, and so on (for instance, $\sigma^2/dn$ is $O(\eps^2\mu_1)$) where $y_1$ represents a parameter of size $O(\eps)$ and $y_2$ of size $O(\mu_1)$, and then expand about  $y_1=0$. 
We note by inspection that $\Ygr_{avb}(\dv-\eb)$ is of order  $\mu_1^2$, and the terms in its expansion hence have the corresponding upper bound  $O(\mu^2 \eps^i  )$ on their absolute sizes. 
We also note that in expanding a rational function about a nonsingular point, the error in the Taylor expansion is bounded by a multiple of the least significant terms omitted. This avoids any need to bound higher derivatives explicitly. In this way, expanding after these $\{y_1,y_2\}$ substitutions, and noting that $1/n=O(\mu_1\eps^2)$), we obtain
\begin{equation*}\Ygr_{avb}(\dv-\eb)
= J +O(\mu_1^2\eps^4), 
\end{equation*}
where $J$ is a polynomial of degree 3 in $y_1$. (Unfortunately $J$ is too large to write here.) Next, removing the `sizing' variables $y_i$ from $J$  by setting them equal to 1, and then expanding the result  about $\eps_v =0$ and retaining all terms of total degree at most 3 in $ \eps_v$, we get
\begin{equation*}
\Ygr_{avb}(\dv-\eb)
= c_{0} 
		+ c_{1}\eps_{v}
		+ c_{2}\eps_{v}^2
 		+O(\mu_1^2\eps^4),
\end{equation*}
where the functions  $c_{0}$, $c_1$, and $c_2$ 
are independent of $\epsV_v$. 
(By calculation, the third order term turns out to be absorbed by the error term.)
Then considering the definition of  $\bad(a,b,\dv-\eb)$  in \eqref{def:bad} we find that the second summation in that definition can be written as
\begin{align*}
\Sigma _{\bad} 
&:= \sum_{v\in \cA(a)\cap \cA(b)}\Ygr_{avb}(\dv-\eb)\\
&= \sum_{v\in \cA(a)\cap \cA(b)}
		(c_{0} 
		+ c_{1}\epsV_{v}
		+ c_{2}\epsV_{v}^2
		+ O(\mu_1^2\eps^4)) \\
&= n c_0 +  n\sigmaBSquared \delta^2c_{2}
	 	-2c_0 - c_{1}(\epsA_a+\epsA_b)
	-c_{2}(\epsA_a^2+\epsA_b^2)
  +O(n\mu_1^2\eps^4),  
\end{align*}
where in the last inequality we use   $\sum_{v\in [n]} \eps_v = 0$ and $\sum_{v\in [n]} \eps_v^2 = n \sigma^2 \delta^2$  (and we recall that the $\eps_v$'s are defined with respect to $\dv$). 
Noting that   $\cA(a)\setminus \cA(b)=\{b\}$, 
we can write $\bad(a,b,\dv-\eb)$ in~\eqn{F2def}, by using~\eqn{def:bad} and~\eqref{aux710}, as 
 \begin{align*}
\bad(a,b,\dv-\eb)
	&= \frac{1}{d_a} 
	\left(\Sigma_{\bad}
	+  \pi(\epsA_a,\epsV_b- \delta)
	  +O(\mu_1 \xi) 
\right), 
\end{align*}
where $d_a = (1+\epsA_a)\bar d$.   
Note that the error term from $\Sigma_{\bad}$ produces an absolute error term of size $O(\mu_1\eps^4)=O(\xi)$  in $\bad(a,b,\dv-\eb) $ since $n/d_a\sim 1/\mu_1$. Substituting the above expression, stripped of its error  terms, into
\begin{align*}
 \frac{\Rc(\Pgr)_{ab}(\dv)}{\rho} -1
 &= \frac{1}{\rho}\cdot \frac{(1+\epsA_a)(1-\bad(a,b,\dv-\eb))}{(1+\epsA_b)(1-\bad(b,a,\dv-\ea))}  -1
\end{align*}
and simplifying gives a rational function $\widehat F$. That is, $\Rc(\Pgr)_{ab}(\dv)/\rho  -1=  \widehat F + O(\xi)$.  After inserting the  size variables $y_1$ and $y_2$  into $\widehat F$ as specified above, and simplifying, we find it has $y_2$ as a factor (of multiplicity 1), and its denominator is nonzero at $y_1=0$.  Then expanding the expression in powers of $y_1$ shows that  $\widehat F=O( y_1^4)$. Along with the extra factor $y_2$, this implies $\widehat F=O(\xi)$. Thus, part (a) follows.

To prove part (b) note that, analogous to~\eqn{aux710}, 
if $\dv'=\dv -\ev$ 
we also have for $b\neq v$  
$$ \Rgr_{ab}(\dv')  = \rho(\epsA_a',\epsA_b', \mu', (\sigmaASquared)', d') 
=\rho\cdot (1 +O( \mu_1\eps^4 ))$$
 where we also use that $a\neq v$.  
Therefore, by definition \eqref{F1def}  and~\eqn{aux710},
\begin{align}\lab{aux1552}
\Pc( \Pgr, \Rgr )_{av}(\dv) 
	&= d_{v} \left(\sum_{b\in  \cA(v)}
		\Rgr_{ba} (\dv-\ev)
		\frac{1- \Pgr_{bv}(\dv  - \eb - \ev)}
		{1-\Pgr_{av}( \dv -\ea - \ev)}\right)^{-1}\nonumber\\
	&= d_{v} \left(\sum_{b\in \cA(v)} 
		\rho (\epsA_b,\epsA_a )  \cdot 
		\frac{1-\pi\left(\epsA_b- \delta ,\epsV_v- \delta\right)}
		{1-\pi\left(\epsA_a- \delta,\epsV_v- \delta\right)}
		\left(1 +O\left(\mu_1\eps^4\right)\right)\right)^{-1}. 
\end{align}
By expanding in $\epsA_b$ 
we obtain 
\begin{align*}
\rho(\epsA_b,\epsA_a) 
\cdot \frac{1-\pi(\epsA_b- \delta ,\epsV_v- \delta)}
			{1-\pi(\epsA_a- \delta ,\epsV_v- \delta )} 
			& = K + O(\eps^4) 
\end{align*} 			
where $K$ is a polynomial in $\epsA_b$  of   degree at most 3.  
Calculations using the  size variables $y_1$ and $y_2$  as above show  that $K = k_{0} + k_{1} \epsA_b  +  k_{2} \epsA_b^2 + O(\eps^4)$  for some $k_{i}$ independent of $\epsA_b$. 
 Also, recall that we have $\cA(v) = [n]\sm\{v\}$.    So
the main summation over~$b$  in~\eqn{aux1552}  can be evaluated  (noting again that $\sum_b \epsA_b^2 = n \sigmaA^2\delta^2$) as 
\begin{align*}
nk_{0} + n  \sigmaA^2\delta^2 k_{2} 
	 - k_{0} - k_{1}\epsA_v - k_{2} \epsA_v^2    
\end{align*}
with relative error $O(\eps^4)$, noting  
that $K$ has constant order, 
where we use that $\sum_{b\in [n]} \epsA_b = 0$.  
Using the size variables $y_1$ and $y_2$ as described above, we then find  that  $\Pc(\Pgr,\Rgr)_{av}(\dv) = \pi (1+O(\mu_1\eps^4))$ for $a\neq v$, with the extra factor $\mu_1$ arising in the error term in the same way as for $\Rc$ in part (a). Part (b) follows. 

For part (c) consider  
$Z = \Yc(\Pgr,\Ygr)_{avb}(\dv)/\Ygr_{avb}(\dv) -1.$ By definition of  $\Yc$, 
$\Pgr$, and $\Ygr$ we can write $Z$ as 
\begin{align}
Z&=\frac{\Pgr_{av}(\dv)\big(\Pgr_{bv}(\dv-\ea-\ev)-\Ygr_{avb}(\dv-\ea-\eb) \big)}
{\Ygr_{avb}(\dv) \big(1- \Pgr_{av}(\dv-\ea-\ev)\big) }
-1 
\end{align}
Then, using~\eqn{aux710}, replace $\Pgr_{av}(\dv)$ by $\pi(\eps_a,\eps_v)$, 
$\Pgr_{bv}(\dv-\ea-\ev)$ by $\pi(\eps_b,\eps_v-\delta)(1+O(\xi)),$ 
$\Pgr_{av}(\dv-\ea-\ev)$ by $\pi(\eps_a-\delta,\eps_v-\delta)(1+O(\xi)),$ 
and, using the same argument as leading to \eqn{aux710}, replace 
$\Ygr_{avb}(\dv-j\ea-j\eb)$ by 
$$\pi(\eps_a-j\delta,\eps_v-j\delta) \pi(\eps_b,\eps_v-(j+1)\delta)
	\cdot\left(1+\frac{1+\eps_a -j\delta - \mu_1(1+\eps_a-j\delta+\eps_b)}{(n-1)(1-\mu_1)}+O(\xi)\right)$$
for $j\in \{0,1\}.$ Now, as for part (a) and (b), use the sizing variables $y_1$ and $y_2$ and expand about $y_1=0$ (note that both denominator and numerator of the fraction in $Z$ are asymptotically equal to $\mu_1^2$). We then find that $Z$ is of size $O(\mu_1\eps^4)$, which proves $(c)$. 
\end{proof}

To prove \thref{t:graphwide},   we set $\W=\D$ and then follow the proof of \thref{t:handCalc}, referring to the set $\D$ within it as $\D'$. Since $\D'\subseteq \D$, \thref{t:handCalc}(a) implies $\pr(\W)=1-o(1)$ in both models
 $\cB_m(n)$ and $\cD(\G(n,m))$. Actually, in the current setting we permit higher values of $\mu$, which is now only bounded above by a small constant, but the earlier proof applies equally well for this extended range.

Next define $Q_1^1$ as in the proof \thref{t:handCalc} to be the set of sequences $\dv\in \Z^n$ such that $\dv - \ea \in \D$ for some $a \in [n]$. 
Then the proof of Claim~\ref{RDense} (which assumed only the upper bound $1/4$ on $\mu$) applies, with adjustment to   the error terms $\eta_i$  using Lemma~\ref{l:mapleHigher} in place of Lemma~\ref{l:hand}, to show that
\bel{Peqnew}
P_{av}(\dv)=\Pgr_{av}(\dv)\big(1+O( \mu_1\eps^4)\big),
\ee
\bel{Reqnew}
 R_{ab}(\dv)=\Rgr_{ab} (\dv)\big(1+O( \mu_1\eps^4)\big),
\ee
 uniformly for all $\dv \in \D$ (or  $\dv \in Q_1^1$, respectively),  and all appropriate $a$, $v$ and $b$. 
Using the latter together with the definition of $\rho$, we find in place of~\eqn{targetLHS}  (with the same definitions of $\cS$ and $\cS'$)  that 
\bel{targetLHSnew}
\frac{\Pr_{\cS'}(\dv-\ea)}{\Pr_{\cS'}(\dv-\eb)}
= \Rgr_{ab}(\dv)  \left(1+ O\left(\mu_1 \eps^4+1/n^{\, 2}\right)\right). 
\ee
 Then  for the independent binomial probability space $\cS$,~\eqn{eq:ratioDense} is only altered by the $\gamma_2$ term cancelling with the $\sigma^2$ term in the definition of $\pi$. Thus we obtain~\eqn{targetRHS} with error term reduced to $O(1/n^2)$. 
 
In this application of Lemma~\ref{l:lemmaX}, the diameter of the auxiliary graph $G$ is $r=O(nd^{\alpha})$ and $\delta= \mu_1 \eps^4+1/n^{\, 2}$ from~\eqn{targetLHSnew}. We may again use $\eps_0=1/n$. The result is
$$\Pr_{\cS'}(\dv) = \Pr_{\cS}(\dv) e^{O\left( r\delta+\eps_0 \right)} 
= H(\dv) \left(1+O\left(\xi + r\delta+\eps_0\right)\right),
$$
with $\xi = (\log^2n )/\sqrt{n}$ entering as before. The theorem follows, since $d^{5\alpha-3}\ge d^\alpha/n$.
\qed

\section{Concluding remarks} \lab{s:final}

In the main result, \thref{t:graphwide},  the upper bound $\mu_0$ on the density $\mu$ can probably be set equal to any constant less than $1/2$, at the expense of only slight changes to the proof. Since other results cover this range of density, we do not follow this line any further. On the other hand, various aspects of the proofs can  be improved with some straightforward work, to obtain a wider range of degrees and smaller error terms, and we plan to pursue this elsewhere.

The  approach of Sections~\ref{s:recursive} and~\ref{sec:FunOps} can be applied to other problems. We plan to apply the method in order to prove related binomial-based models for the degree sequences of random bipartite graphs, loopless directed graphs, and hypergraphs. The approach can also be used to make a major advance in asymptotic enumeration of Latin rectangles.


\bibliographystyle{pagendsort}
\bibliography{references}




\appendix

\section{Approximating the operator fixed points}
\lab{s:technical}

Here we prove the estimates (a), (b) and (c) inside the proof of \thref{RDense}. 
Recall the definitions of 
$\Pgr$, $\Ygr$ and $\Rgr$ in~\eqref{Pgrdef}-\eqref{Ygrdef}, and of $\sigma^2(\dv)$.    
Recall also that $\D$ is a set of sequences of length $n$ satisfying 
$|d_i-d|\le C\sqrt{d\log n}$ for all $i\in[n]$ (for some constant $C$) and 
that $\sigma^2(\dv)\le 2d$  by the  assumptions of \thref{t:handCalc}. 
Furthermore, recall that we set $k_0=4\log n$ and that $\Omega^{(0)}$ 
is the set of sequences $\dv\in\Z^n$ that are at $L_1$ distance 
at most $r=4k_0+4$ from a sequence in $\D$. 
Therefore, (a), (b) and (c) follow from the following lemma with $\eps = (C+o(1))\sqrt{\log n/d}$.

\begin{lemma}\thlab{l:hand} 
Let 
$n$ be an integer 
and let $\cA=\binom{[n]}{2}$. 
Let $\dv$ be a sequence of length~$n$ with average $d$ 
such that $d/(n-1) < 1/4$ and $\sigma^2(\dv)=O(d)$, and assume that 
for  all $1\le i \le n$ we have 
$\size{d_i-d} \leq \eps d $, where $\eps = \eps(n)>0$ is bounded above by a sufficiently small constant. 
Then 
\begin{enumerate}[(a)] 

\item\label{R-appr} $ \Rc(P^{\mathrm{gr}},Y^{\mathrm{gr}})_{ab}(\dv)= R^{\mathrm{gr}}_{ab} (\dv)\left(1 + O\left(\eta_1\right)\right)$ for all $a,b\in [n]$, 
\item\label{P-appr} $\Pc(P^{\mathrm{gr}},R^{\mathrm{gr}})_{av}(\dv) = P^{\mathrm{gr}}_{av}(\dv)\left(1 + O\left(\eta_2\right)\right)$ for all distinct $a,v\in [n]$, and 
\item\label{Q-appr} $ \Yc(\Pgr,\Ygr)_{avb}(\dv)= \Ygr_{avb} (\dv)\left(1 + O\left(\eta_2\right)\right)$ for all distinct $a,v,b\in [n]$,
\end{enumerate}
where 
$\eta_1 =  1/dn + \eps d/n^2 + \eps^4 d^2/n^2$ 
 and 
 $\eta_2 =  1/dn+ \eps/n +  \eps^3d^2/n^2$.
\end{lemma}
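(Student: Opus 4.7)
The plan is to follow the same strategy as \thref{l:mapleHigher}: substitute $\Pgr$ and $\Rgr$ directly into the defining expressions \eqref{F2def} and \eqref{F1def} of the operators $\Rc$ and $\Pc$, and expand everything to low order in the small quantities $\eps$, $1/d$, $1/n$ and $\mu$. Here, because the error targets $\eta_1,\eta_2$ are more generous than the $\mu\eps^4$ target of \thref{l:mapleHigher}, the expansions can be carried out by hand: we only need to track the linear-in-$\eps$ corrections and the single quadratic term $\sum \eps_i^2 = n\sigmaA^2/d^2$, rather than the full quartic-in-$\eps$ precision that forces the use of computer algebra in \thref{l:mapleHigher}. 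The central bookkeeping identity is that a shift of $\dv$ by $j(\ea+\ev)$ with $j=O(\log n)$ changes $\eps_a$ by $-j/d + O(j/\mu n^2)$, changes $d$ itself by $O(j/n)$ (hence $\mu$ by $O(j/n^2)$), and changes $\sigmaA^2$ by $O(j\eps d/n + j^2/n)$. All these perturbations are smaller than the $O(\mu\eps)$ linear terms already present in $\Pgr$.

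For part (a), the task is to evaluate $\bad(a,b,\dv-\eb)$ (and its companion $\bad(b,a,\dv-\ea)$) after substituting $\pv=\Pgr$ into \eqref{def:bad}. Computing $\pv_{avb}$ via \eqref{def:2path} requires the alternating sum $\Sigma^{k_0}$. Because $\Pgr_{av}(\cdot) < 1/2$ under our hypotheses, the individual terms of $\Sigma^{k_0}$ are bounded by a geometric series with ratio $2\mu = 2d/n$, so truncation at $k_0 = 4\log n$ costs only $(2d/n)^{k_0}$, producing the last summand of $\eta_1$. The first few terms of $\Sigma^{k_0}$, expanded in $\eps_a,\eps_v,\eps_b$ and $j/d$, suffice to recover $\pv_{avb}$ to the required precision. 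The second summation in $\bad$, over $v\in\cA(a)\cap\cA(b)=[n]\setminus\{a,b\}$, is evaluated using $\sum_v\eps_v=0$ and $\sum_v\eps_v^2 = n\sigmaA^2/d^2$; the two ``missing'' terms at $v=a$ and $v=b$ contribute an $O(\mu/n)$ correction to $\bad$, which translates to the $\eps d/n^2$ piece of $\eta_1$ after insertion into the ratio in \eqref{F2def}. The residual $1/(dn)$ term arises from the discrete shifts by integer multiples of $\ea,\eb,\ev$. Collecting all contributions, the ratio $(1-\bad(a,b,\cdot))/(1-\bad(b,a,\cdot))$ multiplied by $d_a/d_b$ matches the defining formula of $\Rgr_{ab}$ within the stated $\eta_1$ slack.

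Part (b) is more direct, since $\Pc$ involves no geometric series. After substituting $\Rgr$ and $\Pgr$ into \eqref{F1def}, the factor $\Rgr_{ba}(\dv-\ev)\cdot(1-\Pgr_{bv}(\dv-\eb-\ev))/(1-\Pgr_{av}(\dv-\ea-\ev))$ expanded to second order in $\eps_b$ becomes a polynomial $\kappa_0+\kappa_1\eps_b+\kappa_2\eps_b^2 + O(\eps^3)$ with coefficients $\kappa_i$ depending on $\eps_a,\eps_v,d,\mu,\sigmaA^2$. Summing over $b\in[n]\setminus\{v\}$ using the two moment identities converts the sum into an explicit rational expression; taking its reciprocal and multiplying by $d_v$ then reproduces $\Pgr_{av}(\dv)$ up to the claimed $\eta_2$. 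The main obstacle in both parts is arithmetic rather than conceptual: one must verify that the specific second-order corrections built into $\Pgr$ (the $(d_a-d)(d_v-d)/(d(n-1-d))$ term) and $\Rgr$ (the $(\eps_a-\eps_b)\sigmaA^2/((1-\mu)^2 dn)$ term) are precisely the terms produced by the operator expansions. Since these approximations were engineered exactly so that the leading corrections cancel, the matching succeeds term by term, and all remaining contributions drop below $\eta_1$ or $\eta_2$ by order-of-magnitude comparison.
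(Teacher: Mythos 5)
Your plan follows essentially the same route as the paper's Appendix proof of this lemma: substitute $\Pgr$ and $\Rgr$ into \eqref{def:2path}, \eqref{def:bad}, \eqref{F2def} and \eqref{F1def}, absorb the shifts $\dv\mapsto\dv-j(\ea+\ev)$ with exactly the bookkeeping you describe, truncate the alternating (geometric-ratio $2\mu$) sum at cost $(2\mu)^{k_0}$, and evaluate the sums over $v$ (resp.\ $b$) via $\sum_i\eps_i=0$ and $\sum_i\eps_i^2=n\sigma^2/d^2$, matching the result against $\Rgr$ and $\Pgr$. The only (cosmetic) caveat is that ``linear terms plus the quadratic moment'' slightly understates the required precision: you must also carry the $\mu\eps_a\eps_v$ cross-terms of $\Pgr$ and keep the $\eps_v^2$-coefficients to relative accuracy $O(\eps\mu)$ (using $\sigma^2(\dv)=O(d)$) so that the dropped contributions stay below $\eta_1$ and $\eta_2$, which is precisely what the paper's hand expansion does.
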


\begin{proof} For completeness, this proof duplicates some steps in the proof of Lemma~\ref{l:mapleHigher}. 
We first  reparametrise  $\Pgr$, $\Rgr$ and $\Ygr$ to facilitate identifying negligible terms. 
Write $\mu=\mu(\dv) $ for the  ``density"  $ d/(n-1)$ of a graph with degree  sequence  $\dv$, and note that, by assumption, $\mu\le 1/4$. Define 
the  sequence $(\eps_1,\ldots,\eps_n)$ of  relative deviations from the average degree, 
that is~$\eps_a =(d_a-d)/d$ for $1\le a \le n$. 
Note that 
$|\eps_a|\le \eps$ for each $a$.  
Set $\sigma^2=\sigma^2( \dv )$ and 
\begin{align*}
\pi(\eps_{a},\eps_{v},\mu)
	&=\mu (1+\eps_{a})(1+\eps_{v}) \cdot \left(1 - \frac{\eps_{a}\eps_{v}\mu}{1-\mu}\right), \\
\rho(\eps_{a},\eps_{b},\mu,\sigma^2,d,n)
	&=\frac{1+\eps_{a}}{1+\eps_{b}} \cdot \frac{1 - \mu (1+\eps_{b})}{1 - \mu (1+\eps_{a})}
	\cdot\left(1+(\eps_{a}-\eps_{b})\frac{\sigma^2}{dn}\right), 
\end{align*}
Note that with this definition of $\pi$ we have $P^{\mathrm{gr}}_{av}(\dv) = \pi $ for all distinct $a,v\in [n]$. 
As in Section~\ref{s:wider}, when referring to $\pi$ and $\rho$, we list only an initial segment of parameters containing all those that are different from the ones in the definitions above. 

In the calculations below the following approximations of $\pi$ will often be convenient. 
Let $\dv'$ be sequence that is at $L_1$ distance $O(1)$ from   $\dv$, and with $d_a'=d_a - j_a$ and   $d_v'=d_v - j_v$. 
For such a sequence, $\mu$ becomes $\mu'= \mu+O(1/ n^2)$ since $d$ changes by $O(1/n)$. Therefore, the variable $\eps_a$ changes to $\eps_a-j_a\delta+O(\mu \delta^2)$, where $\delta =1/d$, and the analogous equation holds   for $\eps_v$.
 (Here and in the following, the bare symbols $\mu$, $d$, $\eps_a$ and so on are defined with respect to the original sequence $\dv$, whilst $\mu'$, $d'$, $\eps_a'$ etc.\  are defined with respect to the average degree of 
$\dv'$.) 
Thus  
\begin{equation}\lab{aux112}
P^{\mathrm{gr}}_{av}(\dv')  = \pi(\eps_a',\eps_v',\mu') =\pi(\eps_a-j_a\delta,\eps_v-j_v\delta)(1+O(\mu\delta^2)), 
\end{equation}
from which we see that the small changes in $\eps_ b$ for $b\ne a,v$ have negligible effect.

For \eqref{R-appr}, if $a=b$ then $\Rc(\Pgr,\Ygr)_{aa}(\dv) =1 = \Rgr_{ab}(\dv)$ by definition;  see \eqn{F2def} and~\eqref{Rgrdef}. 
So assume $a\neq b$. Using~\eqn{F2def} to evaluate $ \Rc(\Pgr,\Ygr)_{ab}(\dv)$, we estimate the expression  
$\bad(\Pgr,\Ygr)(a,b,\dv-\eb)=\bad(a,b,\dv-\eb)$ for which, in turn, we need to estimate 
$\sum \Ygr_{avb}(\dv-\eb)$, where the sum is over all $v\in [n]$ such that both $av$ and $bv$ are allowable (see \eqref{def:bad}). 
By definition~\eqref{Ygrdef}, 
\begin{align}\lab{aux667}
\Ygr_{avb}(\dv-\eb) 
	&=\Pgr_{av}(\dv-\eb) \Pgr_{bv}(\dv-\eb-\ea-\ev)(1+1/n)\nonumber\\
	&= \pi  \cdot \pi(\eps_{b}-\delta,\eps_{v}-\delta ) 
	 \left(1 + \mu\delta  + O\left(\mu\delta^2\right)\right), 
\end{align}
by~\eqref{aux112} and  as $\mu\delta=1/(n-1)$.
As  
$\cA(a)\cap \cA(b)=[n]\setminus \{a,b\}$, defining  $\xi = \mu\delta^2 +  \mu\eps\delta$ this gives 
\begin{align}\lab{aux305}
 & \frac{1}{d_a}\sum_{v\in \cA(a)\cap \cA(b)}\Ygr_{avb}(\dv-\eb) \nonumber\\
	 &\qquad = \frac{1}{d_a}\sum_{\substack{1\leq v\leq n\\ v\neq a,b}}
	 	\pi  \cdot \pi(\eps_{b}-\delta,\eps_{v}-\delta) 
		 \left(1 + \mu\delta  + O\left(\mu\delta^2\right)\right) \nonumber\\
  	&\qquad=   \frac{\mu\left(1+\eps_{b}-\delta\right)}{n-1}
		\left(1 + \mu\delta+ O\left(\xi \right)\right) \sum_{\substack{1\leq v\leq n\\ v\neq a,b}}
		(1+\eps_{v})\left(1+\eps_{v}-\delta\right)
		\left(1-\frac{\eps_{a}\eps_{v}\mu}{1-\mu}\right)
		\left(1-\frac{\eps_{b}\eps_{v}\mu}{1-\mu}\right) 		\nonumber\\
	&\qquad=  \frac{\mu\left(1+\eps_{b}-\delta\right)}{n-1}
		\left(1 + \mu\delta+ O\left(\xi  \right)\right)  \sum_{\substack{1\leq v\leq n\\ v\neq a,b}}
		\bigg( 
		1-\delta + c_1\eps_v + (1+O(\eps\mu))\eps_v^2 +O(\eps^4\mu)
		\bigg),
 \end{align}
   where $c_1=c_1(\delta,\eps_a,\mu)$ is some suitable function independent of $\eps_v$
that satisfies $c_1 = O(1)$. 
Note that $\sum_{v=1}^{n} \eps_v = \sum_{v=1}^{n} (d-d_v)/d = 0$ 
and $\sum_{v=1}^{n} \eps_v^2 =  n \sigma^2/ d^2$, by definition of  $\sigma^2$. 
Hence
\begin{align}
&\left(1 + \mu\delta+ O\left( \xi \right)\right)  \sum_{\substack{1\leq v\leq n\\ v\neq a,b}}
		\bigg( 
		1-\delta + c_1\eps_v + (1+O(\eps\mu))\eps_v^2 +O(\eps^4\mu)
		\bigg)\nonumber\\		
&\qquad = \left(1 + \mu\delta\right) 
		\left( (n-2)(1-\delta) - c_1(\eps_a+\eps_b) + (1+O(\eps\mu))\left( n \sigma^2/d^2-\eps_a^2-\eps_b^2\right)\right)
+ O(n\eps^4\mu+n\xi) .  \nonumber
\end{align}
On the other hand, noting that for~\eqn{def:bad}   in this case $\cA(a)\setminus \cA(b)=\{b\}$ (so the first summation only has one term) and $\pv_{ab} =P^{\mathrm{gr}}_{ab}$, we compute, using~\eqn{aux112},  that
$$\frac{\pv_{ab}(\dv-\eb)}{d_a}=  \frac{ \pi(\eps_{a},\eps_{b}-\delta)}{\mu(n-1)(1+\eps_a)}\left(1+O\left(\mu \delta^2\right)\right)=\frac{1+\eps_b}{n-1}\left(1+O\left( \eps^2\mu +\mu \delta^2\right)\right).
$$ 
Thus, from~\eqn{def:bad},
\begin{align}\lab{aux306}
\bad(a,b,\dv-\eb) 
	&=   \frac{1+\eps_b}{n-1}+ \frac{\mu\left(1+\eps_{b}-\delta\right)}{n-1}\left(1 + \mu\delta\right) 
		\left( (n-1)(1-\delta) - 1 +  n \sigma^2/d^2  \right) \nonumber\\
	&\qquad + O\left(\mu \delta^2+ \frac{\eps\mu}{n} + \eps^4\mu^2\right)\nonumber \\
	&=\mu(1+\eps_b)+ \frac{\sigma^2}{dn}(1+\eps_b ) -\frac{1}{n} 
	+ O\left(\mu \delta^2+ \frac{\eps\mu}{n} + \eps^4\mu^2\right),
\end{align}
where we use that $ \sigma^2 = O(d) $ and that $\delta= 1/d = 1/(\mu (n-1))$ and note some non-trivial cancellations. 
The analogous formula is obtained for  $\bad(b,a,\dv-\ea)$ by swapping indices. 
Hence 
\begin{align*}
 \Rc(\Pgr,\Ygr)_{ab}(\dv) 
 	&=\frac{d_{a}}{d_{b}}\cdot \frac{1- \bad(a,b,\dv-{\bf e_{b}})}{1- \bad(b, a, {\bf d} - {\bf e_{a}})}\\
	&=\frac{1+\eps_{a}}{1+\eps_{b}}\cdot \frac{1- \mu (1+\eps_{b})}{1-  \mu(1+\eps_{a}) }
		\cdot\left(1+(\eps_{a}-\eps_{b})  \frac{\sigma^2}{dn} +O\left(\mu \delta^2+ \frac{\eps\mu}{n} + \eps^4\mu^2\right)\right)\\
	&= R^{\mathrm{gr}}_{ab}(\dv)  + O\left(\mu \delta^2+ \frac{\eps\mu}{n} + \eps^4\mu^2 \right).  
\end{align*}
This proves part \eqref{R-appr} of the lemma.


To prove part \eqref{P-appr} note that, analogous to~\eqn{aux112} we also have for $a, b\neq v$
$$ R^{\mathrm{gr}}_{ab}(\dv-\ev)  = \rho(\eps_{a}',\eps_{b}',\mu',(\sigma^2)',d') 
=\rho\cdot\left(1 +O\left( \mu \delta^2  \right)\right).$$
(In particular, $(\sigma^2)' -  \sigma^2=  O(\max |d_i-d|/n)=O(\eps d/n)$.) 
Therefore, by definition \eqref{F1def},  
\begin{align}\lab{aux155}
\Pc(P^{\mathrm{gr}}, R^{\mathrm{gr}})_{av}(\dv) 
	&= d_{v} \left(\sum_{b\in [n]\setminus \{v\}}
		R^{\mathrm{gr}}_{ba} (\dv-\ev)
		\frac{1- P^{\mathrm{gr}}_{bv}(\dv  - \eb - \ev)}
		{1-P^{\mathrm{gr}}_{av}( \dv -\ea - \ev)}\right)^{-1}\nonumber\\
	&= d_{v} \left(\sum_{b\in [n]\setminus \{v\}} 
		\rho\cdot
		\frac{1-\pi\left(\eps_{b}-\delta, \eps_{v}-\delta \right)}
		{1-\pi\left(\eps_{a}-\delta, \eps_{v}-\delta\right)}
		\left(1 +O\left(\mu \delta^2 \right)\right)\right)^{-1}\nonumber\\
	&= d_{v}(1+\eps_{a})\frac{1-\pi\left(\eps_{a}-\delta,\eps_{v}-\delta \right)}{1-\mu(1+ \eps_{a})}
	\left( 1 +O\left(\frac{\eps}{n}+\mu \delta^2\right)\right)\nonumber\\
	&\qquad \times 
	\left(\sum_{b\in  [n]\setminus \{v\}} 		
		(1+\eps_{b})\frac{1-\pi\left(\eps_{b}-\delta,\eps_{v}-\delta \right)}{1-\mu(1+\eps_{b})}\right)^{-1}, 
\end{align}
where we used $ \sigma^2=O(d)$ and $\pi(\cdot,\cdot) = O(\mu)$. 
Straightforward calculations show that for $c\in\{a,b\}$, 
\begin{align}\lab{aux156}
\frac{1-\pi\left(\eps_{c}-\delta,\eps_{v}-\delta \right)}
	{1-\mu\left(1+\eps_{c}\right)}
	& = 1+ \frac{\mu}{1-\mu} 
	\left( 2\delta 
		-\eps_v - \eps_v\eps_c 
	\right)
			+O\left(\frac{\eps}{n}+\eps^3\mu^2+\mu\delta^2\right)\nonumber\\
	& = \left(1- \frac{\mu\eps_v\eps_c}{1-\mu} \right)
	\left(1-  \frac{\mu(\eps_v-2\delta)}{1-\mu}
		+ O\left(\frac{\eps}{n}+\eps^3\mu^2+\mu\delta^2\right)\right).		
\end{align}
Therefore, \eqref{aux155}  is equivalent to 
\begin{align*}
\Pc(P^{\mathrm{gr}}, R^{\mathrm{gr}})_{av}(\dv)
	&=d_{v}(1+\eps_{a})\left(1-\frac{\eps_{a}\eps_{v}\mu}{1-\mu}\right) 
		\left(1+O\left(\frac{\eps}{n}+ \mu \delta^2+\eps^3\mu^2\right)\right)
	\sum_{\substack{1\leq b\leq n\\ b\neq v}} 
	( 1+\eps_{b} )  \left(1-\frac{\eps_{b}\eps_{v}\mu}{1-\mu}\right)\nonumber\\
	&=\mu(1+\eps_{v})(1+\eps_{a})\left(1-\frac{\eps_{a}\eps_{v}\mu}{1-\mu}\right) 
		\left(1+O\left(\frac{\eps}{n}+ \mu \delta^2+\eps^3\mu^2\right)\right)\nonumber\\
	&=\pi\cdot \left(1+O\left(\frac{\eps}{n}+ \mu \delta^2+\eps^3\mu^2\right)\right), 
\end{align*}
where in the second equality we use $\sum_{b} \eps_{b} = 0$ and $\sum_{b} \eps_{b}^2 = O\left(1/\mu\right)$.

For part \eqref{Q-appr} we have by definition of $\Yc$ in~\eqref{FQdef}
\begin{align}\lab{q-aux381} 
\c(\Pgr,\Ygr)_{avb}(\dv) 
	&= \frac{P^{\mathrm{gr}}_{av}(\dv)\left(P^{\mathrm{gr}}_{bv}(\dv-\ea-\ev) - \Ygr_{avb}(\dv-\ea-\ev)\right)}{1-P^{\mathrm{gr}}_{av}(\dv-\ea-\ev)} 
\end{align}
for distinct $a,v,b\in [n]$. 
By definition of $\Ygr$ in~\eqref{Ygrdef}, and~\eqn{aux112},  we obtain as in~\eqn{aux667}
\begin{align*}
\Ygr_{avb}(\dv-\ea-\ev) 
	&= P^{\mathrm{gr}}_{av}(\dv-\ea-\ev) P^{\mathrm{gr}}_{bv}(\dv-2\ea-2\ev) (1+1/n)\nonumber\\
	&= \pi(\eps_{a}-\delta,\eps_{v}-\delta) \cdot \pi(\eps_{b},\eps_{v}-2\delta) (1+\mu\delta+O(\xi))\nonumber\\
&= \pi(\eps_{b},\eps_{v}-\delta) \cdot \pi(\eps_{a}-\delta,\eps_{v}-2\delta) (1+\mu\delta+O(\xi))\nonumber\\	
	&= P^{\mathrm{gr}}_{bv}(\dv-\ea-\ev) P^{\mathrm{gr}}_{av}(\dv-\ea-2\ev) (1+\mu\delta+O(\xi)).
\end{align*}
Plugging this into~\eqn{q-aux381} we obtain
\begin{align*}
\Yc(\Pgr,\Ygr)_{avb}(\dv) 
	&= P^{\mathrm{gr}}_{av}(\dv)P^{\mathrm{gr}}_{bv}(\dv-\ea-\ev)
	\frac{1-P^{\mathrm{gr}}_{av}(\dv-\ea-2\ev) (1+\mu\delta+O(\xi))}{1-P^{\mathrm{gr}}_{av}(\dv-\ea-\ev)}. 	 
\end{align*}
Now straight-forward calculations show that 
\begin{align*} 
\frac{1-P^{\mathrm{gr}}_{av}(\dv-\ea-2\ev) (1+\mu\delta+O(\xi))}{1-P^{\mathrm{gr}}_{av}(\dv-\ea-\ev)} 
	&= \frac{1-\mu(1+\eps_a-\delta)(1+\eps_v-2\delta)\left(1+\frac{\eps_a\eps_v\mu}{1-\mu}+\mu\delta+O(\xi)\right)}
	{1-\mu(1+\eps_a-\delta)(1+\eps_v-\delta)\left(1+\frac{\eps_a\eps_v\mu}{1-\mu}+O(\xi)\right)}\\
	&= \frac{1-B +\delta\mu-\mu/n +O(\xi)}{1-B}\\
	&= 1+\mu\delta+ O(\xi), 
\end{align*}
where 
$$B = \mu(1+\eps_a-\delta)(1+\eps_v-\delta)\left(1+\frac{\eps_a\eps_v\mu}{1-\mu}\right)
=\mu + O(\mu \eps).$$
Thus it follows that 
$\Yc(\Pgr,\Ygr)_{avb}(\dv)
= \Ygr_{avb}(\dv)(1+O(\xi))$ and we are done.  
\end{proof}

 \end{document}